\newtheorem{theorem}{\bf Theorem}[section]
\newtheorem{corollary}{\bf Corollary}[section]
\newtheorem{lemma}{\bf Lemma}[section]
\DeclareMathOperator{\divergence}{div}
\DeclareMathOperator{\grad}{grad}
\DeclareMathOperator{\im}{Im}
\newcommand\scaleddot{\scalebox{.89}{.}}
\renewcommand{\dddot}[1]{%
  {\mathop{\kern\z@#1}\limits^{\makebox[0pt][c]{\vbox to-2\ex@{\kern-\tw@\ex@\hbox{\normalfont\scaleddot\kern-0.5pt\scaleddot\kern-0.5pt\scaleddot}\vss}}}}}
\begin{document}

\title{Small-amplitude static periodic patterns at a fluid-ferrofluid interface}

\author{
M. D. Groves$^{1,2}$ and J. Horn$^{1}$}

\address{$^{1}$ Fachrichtung Mathematik, Universit\"at des Saarlandes, Postfach 151150, 66041 Saarbr\"ucken, Germany\\
$^{2}$Department of Mathematical Sciences, Loughborough University, Loughborough, LE11 3TU, UK\\}

\subject{differential equations, fluid mechanics}

\keywords{ferrofluids, doubly periodic patterns, bifurcation theory}

\corres{M. D. Groves\\
\email{groves@math.uni-sb.de}}

\begin{abstract}
We establish the existence of static doubly periodic patterns (in particular rolls, rectangles and hexagons)
on the free surface of a ferrofluid near onset of the Rosensweig instability, assuming a general (nonlinear)
magnetisation law. A novel formulation of the ferrohydrostatic equations in terms of Dirichlet-Neumann operators
for nonlinear elliptic boundary-value problems is presented. We demonstrate the analyticity of these operators in
suitable function spaces and solve the ferrohydrostatic problem using an analytic version of Crandall-Rabinowitz
local bifurcation theory. Criteria are derived for the bifurcations to be sub-, super- or transcritical
with respect to a dimensionless physical parameter.
\end{abstract}

\begin{fmtext}
\section{Introduction}
Consider two static immiscible perfect fluids in the regions
\begin{align*}
\Omega^\prime & :=\left\{ (x,y,z):\,\eta(x,z)<y<d\right\}, \\
\Omega: & =\left\{ (x,y,z):\,-d<y<\eta(x,z)\right\}
\end{align*}
separated by the free surface $\{y=\eta(x,z)\}$, where gravity acts in the negative $y$ direction.
The upper fluid is non-magnetisable, while the lower is a ferrofluid with a general nonlinear
magnetisation law
$${\mathbf M}=\mathbf{M}(\mathbf{H})=m(|\mathbf{H}|)\frac{\mathbf{H}}{|\mathbf{H}|}$$
expressing the relationship between the magnetisation
${\mathbf M}$ of the ferrofluid and the strength of the magnetic field ${\mathbf H}$.
Subjecting the fluids to a vertically directed magnetic field of sufficient strength leads to the emergence of
interfacial patterns (see Figure \ref{fig: patterns}).
\end{fmtext}

\maketitle

\begin{figure}[H] 
\centering\includegraphics[width=2.8in]{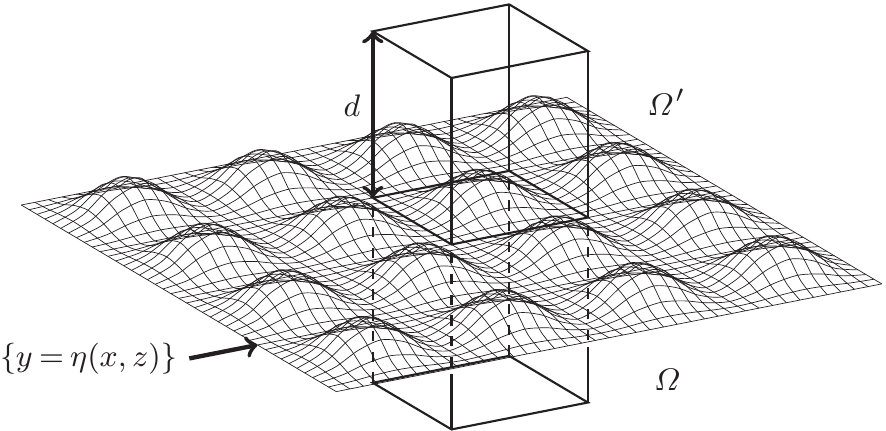} 
\caption{\label{fig: patterns}
A rectangular periodic pattern at the interface $\left\{y=\eta(x,z)\right\}$.} 
\end{figure}

In this article we present an existence theory for small amplitude, doubly periodic patterns with 
\[
\eta(\mathbf{x} + \mathbf{l})=\eta(\mathbf{x})
\]
for every $\mathbf{l}\in \mathscr{L},$ where $\mathbf{x}=(x,z)$ and $\mathscr{L}$ is the lattice given by
$$
\mathscr{L}=\left\{  m\mathbf{l}_{1}+n\mathbf{l}_{2}:\ m,n\in\mathbb{Z} \right\}
$$
with $\left| \mathbf{l}_{1}\right|=\left| \mathbf{l}_{2}\right|. $ 
We are especially interested in three patterns which are observed in experiments (Figure \ref{fig: the Patterns}),
namely rolls, rectangles and hexagons (see Figure \ref{fig: lattices}).

\begin{enumerate}
\item\vspace{-0.8\baselineskip}
For rolls we seek  functions that are independent of the $z$-direction   
and choose \linebreak$\mathbf{l}=(\frac{2\pi}{\omega},0)$, so that
the periodic base cell  
is given by 
$\left\{  x:|x|<\frac{\pi}{\omega} \right\}$.
\item
For rectangles we choose $\mathbf{l}_{1}=(\frac{2\pi}{\omega},0)$, $\mathbf{l}_{2}=(0,\frac{2\pi}{\omega})$,
so that the periodic base cell
is given by 
$\left\{  (x,z):\left|  x\right|,\:\left|  z\right|<\frac{\pi}{\omega} \right\}$.
\item
For hexagons we choose $\mathbf{l}_{1}=\frac{2\pi}{\omega}(1,-\frac{1}{\sqrt{3}})$, 
$\mathbf{l}_{2}=\frac{2\pi}{\omega}(0,\frac{2}{\sqrt{3}}),$ 
so that we obtain an additional periodic direction 
$\mathbf{l}_{3}=\mathbf{l}_{1}+\mathbf{l}_{2}=\frac{2\pi}{\omega}(1,\frac{1}{\sqrt{3}})$  
and the periodic base cell  
is given by 
$\left\{  
(x,z):\left|  x\right|<\frac{2\pi}{\omega},\:
\left| x-\sqrt{3}z\right|<\frac{4\pi}{\omega},
\left| x+\sqrt{3}z\right|<\frac{4\pi}{\omega}
\right\}$.
\end{enumerate}\vspace{-0.8\baselineskip}
Notice that each of these patterns exhibits a rotational symmetry: the shape of the free surface
is invariant under a rotation of the $(x,z)$-plane through respectively (i) $\frac{\pi}{2}$, (ii) $\frac{\pi}{3}$ and (iii) ${\pi}$.

\begin{figure}[H]
\centering\includegraphics[width=3.3in]{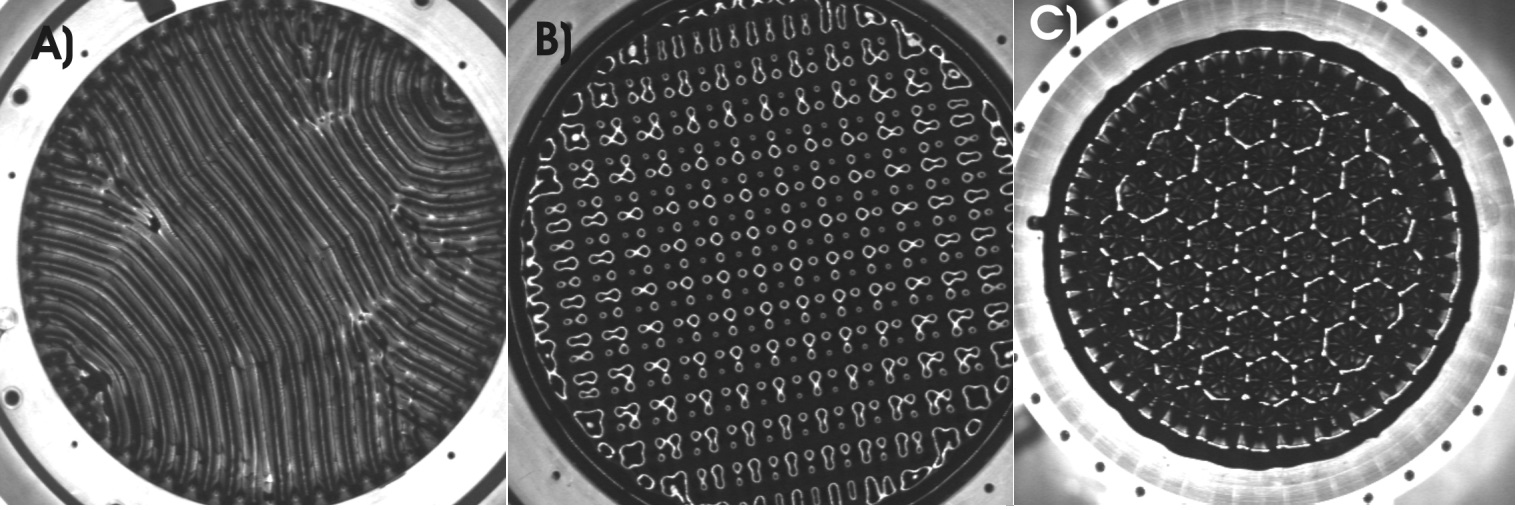} 
\caption{\label{fig: the Patterns}
Experimental observation of static patterns at the fluid-ferrofluid interface:
A) rolls; B) rectangles; C) hexagons (Fachrichtung Physik, Universit\"{a}t des Saarlandes).}  
\end{figure}

In Section \ref{ch: Mathematical description}\ref{sec: The physical problem} 
we derive the mathematical formulation of this problem from physical principles.
The governing equations (equations \eqref{eq: NLAPLACEO}--\eqref{eq: ENBC3}) are formulated
in terms of perturbations $\phi^\prime$ and $\phi$ of magnetic potentials corresponding 
to a uniform vertically directed magnetic field (of strength $h\mu(h)$ in the upper fluid and
$h$ in the lower fluid, where
$\mu$ is obtained from the magnetisation law by the formula
$\mu(s)=1+m(s)/s$; the potentials
are horizontally doubly periodic, satisfying
\[
\phi^\prime(\mathbf{x} + \mathbf{l},y)=\phi^\prime(\mathbf{x},y)
,
\qquad
\phi(\mathbf{x} + \mathbf{l},y)=\phi(\mathbf{x},y)
\]
for every $\mathbf{l}\in \mathscr{L}$ (with a slight abuse of notation).

This problem was first studied by Cowley and Rosensweig \cite{CowleyRosensweig67}. 
Using a linear stability analysis, they found that, 
as the strength $h$ of the magnetic field 
exceeds a critical value $h_{\mathrm{c}}$, the flat surface destabilises and  a hexagonal pattern of peaks appears. 
This phenomenon is known as the \emph{Rosensweig instability}. 
A mathematically rigorous treatment of the problem was given by Twombly and Thomas \cite{TwomblyThomas83}, who 
used coordinate transformations to `flatten' the free surface by
transforming  the \emph{a priori} unknown domains $\Omega^\prime$ and $\Omega$ into fixed strips. Applying
Lyapunov-Schmidt reduction reduces these transformed equations for rotationally symmetric patterns (see below)
to a locally equivalent one-dimensional equation 
which is solved using the implicit-function theorem; 
the result is the existence, for values of $h$ near $h_{\mathrm{c}}$, of rolls and rectangles in addition to the hexagonal pattern.    
Twombly and Thomas's work is however flawed by some miscalculations and mathematical inconsistencies, and is also restricted to
linear magnetisation laws. In this article we present a more systematic approach 
which is motivated by the corresponding study of doubly periodic travelling water waves 
by Craig and Nicholls \cite{CraigNicholls00}; we also consider general nonlinear magnetisation laws.

We work with dimensionless variables,  in terms of which the problem depends upon
two dimensionless parameters $\beta$ (whose value $\beta_0$ is fixed) and $\gamma$ (see equation \eqref{eq: defns of parameters}), and `flatten' the equations using \emph{Dirichlet-Neumann formalism}.
The \emph{Dirichlet-Neumann operator} $G^\prime$ for the upper fluid domain 
(given by $\{\eta(x,z)<y<\tfrac{1}{\beta_0}\}$ in dimensionless variables)
is defined as follows.
Fix $\Phi^\prime=\Phi^\prime(x,z),$ solve the \emph{linear} boundary-value problem
\begin{align*}
\phi^\prime_{xx} +\phi^\prime_{yy}+\phi^\prime_{zz} &=0,\hphantom{\Phi^\prime}\quad\qquad\eta<y<\tfrac{1}{\beta_0},
\\
\phi^\prime&=\Phi^\prime,\hphantom{0}\quad\qquad y=\eta, 
\\
\phi^\prime_{y}&=0,\hphantom{\Phi^\prime}\quad\qquad y=\tfrac{1}{\beta_0},
\end{align*}
and define
\begin{align}
G^\prime(\eta,\Phi^\prime)& =-(1+\eta_{x}^{2}+\eta_{z}^{2})^\frac{1}{2}
\phi^\prime_{n}\big|_{y=\eta}
 =-(
\phi^\prime_{y}-\eta_{x}\phi^\prime_{x}-\eta_{z}\phi^\prime_{z}
)\big|_{y=\eta}.
\label{eq: N(B)}
\end{align}
The \emph{Dirichlet-Neumann operator} $G$ for the lower fluid domain
$\{-\tfrac{1}{\beta_0} < y < \eta(x,z)\}$ is similarly defined as
\begin{align}
G(\eta,\Phi)& =(1+\eta_{x}^{2}+\eta_{z}^{2})^\frac{1}{2}\mu(|\grad(\phi+y)|)
\phi_{n}\big|_{y=\eta}\nonumber\\
& =\mu(|\grad(\phi+y)|)(
\phi_{y}-\eta_{x}\phi_{x}-\eta_{z}\phi_{z}
)\big|_{y=\eta},
\label{eq: N(A)}
\end{align}
where $\phi$ is the solution of the (in general \emph{nonlinear}) boundary-value problem
\begin{align}
\textnormal{div}(\mu(|\grad(\phi+y)|)\grad(\phi+y))&=0, \hphantom{\mu_1}\quad\qquad-\tfrac{1}{\beta_0}<y<\eta,
\label{eq: Intro Laplace for G-unten}
\\
\phi&=\Phi, \hphantom{\mu_1}\quad\qquad y=\eta,
\\
\mu(|\grad(\phi+y)|)(\phi_y+1)&=\mu(1), \quad\qquad y=-\tfrac{1}{\beta_0}.
\label{eq: Intro Bc for G-unten}
\end{align}
The nonlinearity of  \eqref{eq: Intro Laplace for G-unten}--\eqref{eq: Intro Bc for G-unten}
is inherited from that of the magnetisation law ${\mathbf M}=\mathbf{M}(\mathbf{H})$ (for a linear magnetisation law the value
of $\mu$ is constant and \eqref{eq: Intro Laplace for G-unten}, \eqref{eq: Intro Bc for G-unten} are replaced by respectively Laplace's equation and
a linear Neumann boundary condition). In Sections
\ref{ch: Mathematical description}\ref{sec: DN formalism} and \ref{sec: DiNOs} we show that $G^\prime$ and $G$ are
analytic functions of respectively $(\eta,\Phi^\prime)$ and $(\eta,\Phi)$ in suitable function spaces and use these operators to
recast the governing
equations in terms of the variables $\Phi^\prime=\phi^\prime|_{y=\eta}$ and $\Phi=\phi|_{y=\eta}$.
The 
mathematical problem is thus to solve a system of equations of the form
$$
{\mathcal G}(\gamma,(\eta,\Phi^\prime,\Phi))=0,
$$
where ${\mathcal G}: {\mathbb R} \times X_0 \rightarrow Y_0$
is given explicitly by the left-hand sides of equations \eqref{eq: NNBC1}--\eqref{eq: NNBC3}
and the function spaces $X_0$, $Y_0$ are specified in equation \eqref{eq: Defns of U0, B0}.
Observe that this problem exhibits rotational symmetry: it
is invariant under rotations through respectively $\pi$, $\frac{\pi}{2}$ and $\frac{\pi}{3}$ for
rolls, rectangles and hexagons,
and one may therefore replace $X_0$ and $Y_0$ by their subspaces of functions that are invariant under these rotations
(denoted by  $X_\mathrm{sym}$ and $Y_\mathrm{sym}$). 

In Section \ref{ch: CRT} we discuss the existence of small-amplitude solutions
to \eqref{eq: Nfinal} within the framework of \emph{analytic
Crandall-Rabinowitz local bifurcation theory }
(see Buffoni and Toland\linebreak
\cite[Chapter 8]{BuffoniToland}), using $\gamma$
as a bifurcation parameter.
According to that theory values $\gamma_0$ of
$\gamma$ at which non-trivial solutions bifurcate from
zero (clearly ${\mathcal G}(\gamma,0)=0$ for all values of $\gamma$)
necessarily have the property that the kernel of the linear operator
$L_{0}:=\mathrm{d}_{2}{\mathcal G}[\gamma_0, 0]: X_0 \rightarrow Y_0 $ is non-trivial.
We show that $\ker L_0$ is non-trivial if and only if 
$$\gamma_0= r(|{\bf k}|):=\left(\mu_1(\mu_1-1)^2\bigg(\mu_1|\mathbf{k}| \coth\frac{|\mathbf{k}|}{\beta_0} +S_1 |\mathbf{k}| \coth\frac{S_1|\mathbf{k}|}{\beta_0}\bigg)^{\!\!-1}-1\right)|\mathbf{k}|^2$$
for some $\mathbf{k}\in\mathscr{L}^{\star}\setminus \left\{ \mathbf{0}\right\},$  
where $\mu_1=\mu(1)$, $\dot{\mu}_1=\dot{\mu}(1)$ and
$S_1=\left(\mu_1/(\mu_1+\dot{\mu}_1)\right)^{1/2}$. Choosing\linebreak
$\beta_0<\mu_1(\mu_1-1)^2/(\mu_1+1)$ and $\omega$ so that
$$\gamma_0 = \left(\mu_1(\mu_1-1)^2\bigg(\mu_1\omega \coth \frac{\omega}{\beta_0}
+S_1 \omega \coth\frac{S_1\omega}{\beta_0}\bigg)^{\!\!-1}-1\right)\omega^2$$
is the unique maximum of the mapping $|\mathbf{k}| \mapsto r(|\mathbf{k}|)$, we find that
$$
\ker L_{0}
=
\langle{\left\{  \mathbf{v}\sin(\mathbf{k}\cdot\mathbf{x}),\mathbf{v}\cos(\mathbf{k}\cdot\mathbf{x}): \mathbf{k}\in\mathscr{L}^{\star} 
\ \mathrm{with} \ |\mathbf{k}|=\omega\right\}}\rangle,
$$
where ${\bf v} \in {\mathbb R}^3$ is given by equation \eqref{eq: bold v}
(see the discussion to Figure \ref{pic: disprel}); this value
$(\beta_0,\gamma_0)$
of $(\beta,\gamma)$ corresponds to the Rosensweig instability. The dimension 
of $\ker L_{0}$ is therefore determined by the number of vectors in $\mathscr{L}^{\star}$ with length $\omega$;
for rolls, rectangles and hexagons we find that $\dim \ker L_{0}$ is respectively $2$, $4$ and
$6$ (see Figure \ref{fig: length} and Sattinger \cite[Section 2]{Sattinger78} for a general discussion of this point).  Because the kernel of $L_{0}$ is  multidimensional, one can not use Crandall-Rabinowitz local bifurcation theory directly.
To overcome this problem we replace $X_0$ and $Y_0$ by $X_\mathrm{sym}$ and $Y_\mathrm{sym}$, thus restricting to
solutions that are invariant under rotations
through respectively $\pi$, $\frac{\pi}{2}$ and $\frac{\pi}{3}$ for rolls, rectangles and hexagons.
These restrictions ensure that $\dim \ker L_{0}=1$ 
with $\ker L_{0}=\langle{v_{0}}\rangle,$ 
where  $v_{0}=\mathbf{v}\,e_{1}(x,z)$ 
and
$$e_1(x,z) = \left\{ \begin{array}{ll} \cos\omega x & \qquad \mbox{(rolls)} \\
\cos\omega x+\cos\omega z & \qquad \mbox{(rectangles)} \\
\cos \omega x
+\cos   \frac{\omega}{2}
\left(x+\sqrt{3}z\right)
+\cos \frac{\omega}{2}
\left(x-\sqrt{3}z\right)  & \qquad \mbox{(hexagons).}
\end{array}\right.$$

Verifying the remaining conditions in the analytic Crandall-Rabinowitz local bifurcation theorem yields the following result.
\begin{theorem} \label{thm: main result 1}
The point $(\gamma_0, 0)$ is a local bifurcation point for \eqref{eq: Nfinal}, 
 that is 
 there exist $\varepsilon >0,$ open neighbourhoods $W_\mathrm{sym}$ of $(\gamma_0, 0)$ in $\mathbb{R}\times X_\mathrm{sym}$ and $V_\mathrm{sym} $ of $0$ in $X_\mathrm{sym}$
and  analytic functions 
$w:(-\varepsilon,\varepsilon)\rightarrow V_\mathrm{sym} $, $ \gamma: (-\varepsilon,\varepsilon)\rightarrow \mathbb{R}$ with  
$\gamma(0)=\gamma_0, w(0)=v_{0}$ 
such that ${\mathcal G}(\gamma(s),sw(s))=0$ for every $s\in (-\varepsilon,\varepsilon).$ 
Furthermore   
\begin{align*}
W_\mathrm{sym}\cap N
=
\left\{  (\gamma(s),sw(s)):\
0<|s|<\varepsilon\right\},
\end{align*}
where
$$
N=\left\{  (\gamma,v)\in \mathbb{R}\times(V_\mathrm{sym}  \setminus \left\{  0\right\}):{\mathcal G}(\gamma,v)=0\right\}.
$$
\end{theorem}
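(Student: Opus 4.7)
The plan is to verify the hypotheses of the analytic Crandall-Rabinowitz local bifurcation theorem (Buffoni and Toland \cite[Chapter 8]{BuffoniToland}) for $\mathcal{G}: \mathbb{R} \times X_{\mathrm{sym}} \to Y_{\mathrm{sym}}$ at $(\gamma_0, 0)$. These hypotheses are: (i) $\mathcal{G}$ is analytic in a neighbourhood of $(\gamma_0, 0)$ and $\mathcal{G}(\gamma, 0) \equiv 0$; (ii) $L_0 := \mathrm{d}_2\mathcal{G}[\gamma_0, 0]: X_{\mathrm{sym}} \to Y_{\mathrm{sym}}$ is Fredholm of index zero; (iii) $\ker L_0 = \langle v_0 \rangle$; and (iv) the transversality condition $\mathrm{d}_1\mathrm{d}_2\mathcal{G}[\gamma_0, 0]\, v_0 \notin \operatorname{range}(L_0)$.

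I would begin by confirming that $\mathcal{G}$ indeed maps $\mathbb{R} \times X_{\mathrm{sym}}$ into $Y_{\mathrm{sym}}$: the governing equations \eqref{eq: NNBC1}--\eqref{eq: NNBC3} are invariant under the rotations that define the symmetric subspaces, and the operators $G^\prime$, $G$ commute with these rotations because their defining boundary-value problems do. Analyticity of $\mathcal{G}$ on the symmetric subspaces then descends from the analyticity of $G^\prime$ and $G$ established in Sections 2.2--2.3, and $\mathcal{G}(\gamma, 0) = 0$ holds trivially, since the flat interface $\eta = 0$ with vanishing perturbation potentials satisfies every component of $\mathcal{G}$ for every $\gamma$.

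For (ii) and (iii) I would rely on the Fourier diagonalisation implicit in the preceding discussion. Translation invariance of the flat base state makes $L_0$ act as a $3\times 3$ matrix $M(\mathbf{k})$ on each Fourier mode $\mathbf{k} \in \mathscr{L}^{\star}$, with $\det M(\mathbf{k}) = 0$ precisely when $\gamma_0 = r(|\mathbf{k}|)$, hence only on the finite resonant shell $|\mathbf{k}| = \omega$. Boundedness of $M(\mathbf{k})^{-1}$ away from this shell (in the Sobolev-type norms of Section 2) furnishes closed range and finite-dimensional kernel and cokernel of equal dimension, so $L_0$ is Fredholm of index zero. The identification $\ker L_0 = \langle v_0 \rangle$ under the imposed rotational symmetry has already been carried out above, and $\operatorname{range}(L_0)$ is the zero set of a continuous linear functional $\ell$ which extracts the component along a fixed vector $\mathbf{v}^{\star} \in \mathbb{R}^3$ spanning $\operatorname{coker} M(\mathbf{k})\big|_{|\mathbf{k}|=\omega}$, projected onto the symmetric $e_1$-mode.

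The main obstacle is the transversality condition (iv). Since $\gamma$ enters $\mathcal{G}$ only through a single scalar coefficient in the linearised Bernoulli-type equation (multiplying, in the Fourier symbol, the curvature contribution $|\mathbf{k}|^2$ in the $\eta$-component), $\mathrm{d}_1\mathrm{d}_2\mathcal{G}[\gamma_0, 0]\, v_0$ equals a nonzero multiple of $\omega^2\, e_1(x,z)$ placed in the appropriate slot. Applying $\ell$ to this expression reduces to evaluating the derivative of the dispersion determinant with respect to $\gamma$ at the critical mode, which in turn equals $\tfrac{\mathrm{d}}{\mathrm{d}\gamma}(\gamma - r(\omega))\big|_{\gamma=\gamma_0} = 1 \neq 0$. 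This establishes (iv), and the analytic Crandall-Rabinowitz theorem then delivers both the analytic branch $s \mapsto (\gamma(s), sw(s))$ with $\gamma(0) = \gamma_0$, $w(0) = v_0$, and the local uniqueness statement $W_{\mathrm{sym}} \cap N = \{(\gamma(s), sw(s)) : 0 < |s| < \varepsilon\}$.
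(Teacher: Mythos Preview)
Your outline matches the paper's proof: verify analyticity and the trivial-branch identity, Fourier-diagonalise $L_0$ into the $3\times 3$ blocks $L_0(|\mathbf{k}|)$, use the explicit inverse off the shell $|\mathbf{k}|=\omega$ to get the Fredholm property (the paper's Lemma~\ref{lem: Lo} and its corollary), and reduce the kernel to $\langle v_0\rangle$ by passing to $X_{\mathrm{sym}}$.

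One inaccuracy in step (iv): $\gamma$ enters \eqref{eq: NNBC3} as the term $-\gamma\eta$, not as a factor on the curvature contribution, so $\mathrm{d}_1\mathrm{d}_2\mathcal{G}[\gamma_0,0](1,v)=(0,0,-\eta)^{\mathrm T}$ and the image of $v_0$ is a nonzero multiple of $e_1$ in the third slot, without the $\omega^2$ you wrote. This does not affect your conclusion, because your determinant argument is independent of that factor: writing $\det L_0(|\mathbf{k}|,\gamma)=-B(|\mathbf{k}|)\big(\gamma-r(|\mathbf{k}|)\big)$ with $B(\omega)>0$, one has $\partial_\gamma\det L_0(\omega,\gamma_0)=-B(\omega)\neq 0$, which via Jacobi's formula is equivalent to $(\mathbf{v}^\star)^{\mathrm T}\,\partial_\gamma L_0(\omega)\,\mathbf{v}\neq 0$ and hence to transversality. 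The paper instead verifies transversality by applying the explicit projection $P$ (formula \eqref{eq: formula for P}) to $(0,0,-\eta_0)^{\mathrm T}$ and reading off a nonzero coefficient; your route is equivalent and slightly more conceptual.
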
 

In Section \ref{ch: trans, sub or super} we examine the bifurcating branches identified in
Theorem \ref{thm: main result 1}.

\begin{theorem} \label{thm: main result 2}
Branches of small-amplitude doubly periodic solutions to the ferrohydrostatic problem
bifurcate from the trivial solution at $\gamma=\gamma_0$. The bifurcation is
\begin{enumerate}\vspace{-\baselineskip}
\item transcritical in the case of hexagons,
\item super- or subcritical in the case of rolls and rectangles, depending upon the sign
of a coefficient $\gamma_2$ which is determined by $\mu$ and $\omega/\beta_0$.
\end{enumerate}
\end{theorem}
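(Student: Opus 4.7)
The plan is to exploit the analyticity granted by Theorem \ref{thm: main result 1} and Taylor-expand the bifurcating branch $(\gamma(s), sw(s))$ about $s=0$. Writing $w(s) = v_0 + sw_1 + s^2 w_2 + O(s^3)$ and $\gamma(s) = \gamma_0 + s\gamma_1 + s^2\gamma_2 + O(s^3)$, substituting into $\mathcal{G}(\gamma(s), sw(s)) = 0$ and equating coefficients of $s^n$, I use the identity $\mathcal{G}(\gamma, 0) \equiv 0$ to dispose of the $s^0$ and $s^1$ equations automatically. At order $s^2$ one obtains
\begin{equation*}
L_0 w_1 + \gamma_1 \dot{L}_0 v_0 + \tfrac12 B(v_0, v_0) = 0,
\end{equation*}
where $\dot{L}_0 := \mathrm{d}_{12}\mathcal{G}[\gamma_0, 0]$ and $B := \mathrm{d}_2^2 \mathcal{G}[\gamma_0, 0]$. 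Since $L_0$ is Fredholm of index zero with $\ker L_0 = \langle v_0\rangle$, there is (up to scaling) a single cokernel functional $\psi$, and pairing with $\psi$ gives
\begin{equation*}
\gamma_1 = -\frac{\langle B(v_0, v_0), \psi\rangle}{2\langle \dot{L}_0 v_0, \psi\rangle},
\end{equation*}
the denominator being nonzero by the Crandall--Rabinowitz transversality condition already verified in Section \ref{ch: CRT}.

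The decisive Fourier-analytic observation is that $B(v_0, v_0)$ is a trigonometric polynomial supported on wavevectors of the form $\mathbf{k}+\mathbf{k}^\prime$ with $\mathbf{k}, \mathbf{k}^\prime\in \mathscr{L}^\star$ and $|\mathbf{k}|=|\mathbf{k}^\prime|=\omega$. For hexagons, writing $e_1 = \sum_{i=1}^{3} \cos(\mathbf{k}_i\cdot\mathbf{x})$ with $\mathbf{k}_1=(\omega,0)$, $\mathbf{k}_2=(\omega/2,\omega\sqrt{3}/2)$, $\mathbf{k}_3=(\omega/2,-\omega\sqrt{3}/2)$, the triadic resonance $\mathbf{k}_2+\mathbf{k}_3=\mathbf{k}_1$ (and cyclic analogues modulo sign) makes $\cos(\mathbf{k}_2\cdot\mathbf{x})\cos(\mathbf{k}_3\cdot\mathbf{x})$ contain a $\cos(\mathbf{k}_1\cdot\mathbf{x})$ mode, which lies in $\ker L_0$. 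A direct computation of the resonant Fourier component of $B$ — using the quadratic Taylor parts of $G^\prime$ and $G$ from Sections \ref{ch: Mathematical description}\ref{sec: DN formalism}--\ref{sec: DiNOs} together with the quadratic part of the interface equation — shows $\langle B(v_0, v_0), \psi\rangle \neq 0$, whence $\gamma_1 \neq 0$ and the bifurcation is transcritical.

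For rolls ($\mathbf{k}\in\{\pm\omega\mathbf{e}_1\}$) and rectangles ($\mathbf{k}\in\{\pm\omega\mathbf{e}_1, \pm\omega\mathbf{e}_2\}$) no such resonance exists: every sum of two admissible wavevectors has length $0$, $\sqrt{2}\omega$ or $2\omega$, never $\omega$. Hence $B(v_0, v_0) \in \im L_0$ and $\gamma_1 = 0$. Proceeding to order $s^3$ with $\gamma_1 = 0$ gives
\begin{equation*}
L_0 w_2 + \gamma_2 \dot{L}_0 v_0 + B(v_0, w_1) + \tfrac16 T(v_0, v_0, v_0) = 0, \qquad T := \mathrm{d}_2^3 \mathcal{G}[\gamma_0, 0],
\end{equation*}
in which $w_1$ is the unique solution in a fixed complement of $\ker L_0$ to $L_0 w_1 = -\tfrac12 B(v_0, v_0)$, obtainable mode-by-mode because $L_0$ acts diagonally on the Fourier basis of $X_{\mathrm{sym}}$. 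Pairing with $\psi$ yields
\begin{equation*}
\gamma_2 = -\frac{\langle B(v_0, w_1), \psi\rangle + \tfrac16 \langle T(v_0, v_0, v_0), \psi\rangle}{\langle \dot{L}_0 v_0, \psi\rangle},
\end{equation*}
whose sign is the quantity advertised in the theorem; unpacking its dependence on $\mu_1$, $\dot\mu_1$, $\ddot\mu_1$ and $\omega/\beta_0$ completes the rolls/rectangles case.

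The main obstacle will be the explicit algebraic evaluation of $B$ and $T$ at $v_0$ (and at $w_1$). This demands the second- and third-order Taylor parts of the Dirichlet--Neumann operators $G^\prime$ and $G$ at $(\eta,\Phi)=(0,0)$; for $G^\prime$ this parallels the water-wave calculation of Craig--Nicholls, but for $G$ one must differentiate the nonlinear boundary-value problem \eqref{eq: Intro Laplace for G-unten}--\eqref{eq: Intro Bc for G-unten} and solve the resulting cascade of linear problems, tracking terms linear, quadratic and cubic in $\dot\mu_1$, $\ddot\mu_1$. Distilling these computations into a closed-form expression for $\gamma_2$ from which the sign can be read off as a function of $\omega/\beta_0$ is the central labour of the final section.
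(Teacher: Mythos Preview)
Your approach is essentially that of the paper: expand the bifurcating branch in $s$, identify $\gamma_1$ and $\gamma_2$ via the cokernel pairing (the paper's projection $P$ onto $\ker L_0$ along $\im L_0$), and read off the bifurcation type from the Fourier content of the quadratic term $Q_0(v_0,v_0)$. The only cosmetic difference is that you argue the vanishing of $\gamma_1$ for rolls and rectangles via the absence of triadic resonance on the lattice, whereas the paper verifies directly that $[e_1^2]_1=[|\nabla e_1|^2]_1=0$; for hexagons both you and the paper assert (rather than exhibit the full computation) that $\gamma_1$ is generically nonzero.
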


Explicit formulae for the coefficient $\gamma_2$ are given in some special cases
in Section \ref{ch: trans, sub or super} (such formulae are unwieldy, and it appears in
general more appropriate to calculate them numerically for a specific choice of $\mu$).
We note in particular that for constant $\mu$ (corresponding to a linear magnetisation law)
and very deep fluids, rolls bifurcate subcritically for $\mu<\mu_\mathrm{c}^1$ and
supercritically for $\mu>\mu_\mathrm{c}^1$, while rectangles bifurcate subcritically for
$\mu<\mu_\mathrm{c}^2$ and supercritically for $\mu>\mu_\mathrm{c}^2$, where
$$
\mu_\mathrm{c}^1=\frac{21}{11}+\frac{8}{11}\sqrt{5},
\qquad
\mu_\mathrm{c}^2=\frac{115+160\sqrt{2}+8\sqrt{184+11\sqrt{2}}}{141+128\sqrt{2}}.
$$
The same values were obtained by Silber and Knobloch \cite{SilberKnobloch88} in a discussion of normal forms for this bifurcation problem and confirmed by Lloyd, Gollwitzer, Rehberg 
and Richter \cite{LloydGollwitzerRehbergRichter15}  
as part of a wider numerical and experimental investigation.

Finally, we note that supercritical bifurcation of rolls is associated with (supercritical) bifurcation of
spatially localised patterns, whose existence has been established by dynamical-systems
arguments by Groves, Lloyd \& Stylianou \cite{GrovesLloydStylianou17}.

\section{Mathematical formulation}
\label{ch: Mathematical description}
\subsection{The physical problem}

\label{sec: The physical problem}

We consider two static immiscible perfect fluids in the regions
$$\Omega^\prime:=\{(x,y,z):\,\eta(x,z)<y<d\}, \qquad \Omega:=\{(x,y,z):\,-d<y<\eta(x,z)\}$$
separated by the free interface $\{y=\eta(x,z)\}$.
The upper, non-magnetisable fluid has unit relative permeability and density $\rho^\prime$,
while the lower is a ferrofluid with density $\rho$.
The relations between the magnetic fields $\mathbf{H}^\prime$, $\mathbf{H}$ and the induction fields $\mathbf{B}^\prime$, $\mathbf{B}$  
are given by the identities 
$$
\mu_{0}\mathbf{H}^\prime =\mathbf{B}^\prime,
\qquad\qquad
 \mu_{0}(\mathbf{H}+\mathbf{M}(\mathbf{H}))
=\mathbf{B}
$$
where $\mu_{0}$ is the vacuum permeability 
and $\mathbf{M}$ is the 
magnetic intensity of the ferrofluid. (Here, and in the remainder of this paper, equations for `primed' and `non-primed' quantities are supposed to hold in respectively $\Omega^\prime$ and $\Omega$.)  
We suppose that 
$$
\mathbf{M}(\mathbf{H})=m(|\mathbf{H}|)\frac{\mathbf{H}}{|\mathbf{H}|},
$$
where $m$ is a nonnegative function, so that in particular $\mathbf{M}$ and $\mathbf{H}$ are collinear. 
According to Maxwell's equations the magnetic and induction fields are respectively irrotational and solenoidal, 
and introducing 
magnetic potential functions $\phi^\prime, \phi$ with
\begin{align*}
-\grad\phi^\prime =\mathbf{H}^\prime,
\qquad\qquad
-\grad\phi =\mathbf{H},
\end{align*}
one finds that these potentials satisfy the equations 
\begin{align}
\divergence(\grad\phi^\prime)=0
,
\qquad\qquad
\divergence(\mu(|\grad\,\phi|)\grad\,\phi)=0,
\label{eq: Laplaciana}
\end{align}
in which  
$$
\mu(s)=1+\frac{m(s)}{s};
$$
we assume that $\mu:(0,\infty)\to\mathbb{R}$ is analytic and satisfies $\mu(1) + \dot{\mu}(1)>0$
(so that the linearised version of the equation for $\phi$ is elliptic). 
Observe that $\phi^\prime$ is harmonic while $\phi$ satisfies a nonlinear elliptic partial differential equation;
this nonlinearity is inherited from that of the magnetisation law ${\mathbf M}=\mathbf{M}(\mathbf{H})$
(for a linear magnetisation law the value of $\mu$ is constant and the equation for $\phi$ reduces
to Laplace's equation).
At the interface we have the magnetic conditions 
$$
\mathbf{H}^\prime\cdot \mathbf{t}_{1} =\mathbf{H}\cdot \mathbf{t}_{1},
\qquad\qquad
\mathbf{H}^\prime\cdot \mathbf{t}_{2} =\mathbf{H}\cdot \mathbf{t}_{2},
\qquad\qquad
\mathbf{B}^\prime\cdot \mathbf{n} =\mathbf{B}\cdot \mathbf{n},
$$
where 
\begin{align*}
\mathbf{t}_{1}
&=
\cfrac{(1,\eta_{{x}},0)^{\mathrm{T}}
}{
\sqrt{1+\eta^{2}_{x}}}
,
\qquad\qquad
\mathbf{t}_{2}
=
\cfrac{(0,\eta_{{z}},1)^{\mathrm{T}}
}{
\sqrt{1+\eta^{2}_{z}}}
,
\qquad\qquad
\mathbf{n}=\cfrac{(-\eta_{{x}},1,-\eta_{{z}})^{\mathrm{T}}
}{
\sqrt{1+\eta^{2}_{x}+\eta^{2}_{z}}}
\end{align*}
are the tangent and normal vectors to the interface; 
it follows that 
\begin{align}
\phi^\prime-\phi\big|_{y=\eta} =0
,
\qquad\qquad
\phi^\prime_{n}
-\mu(|\grad\,\phi|) \phi_{n}\big|_{y=\eta}  =0.
\label{eq: bc2}
\end{align}

The ferrohydrostatic Euler equations are given by 
\begin{align*}
-\grad{(p'_{0}+\rho^\prime gy)} =\mathbf{0}  
,
\qquad\qquad
\mu_{0}
\left({\mathbf{M}(\mathbf{H})\cdot
(\partial_{x},\partial_{z},\partial_{y})^{\mathrm T}}\right) {\mathbf{H}}
-\grad({p^{\star}+
\rho gy}) =\mathbf{0}
\end{align*}
(Rosensweig \cite[Section 5.1]{Rosensweig}),
in which   
$g$ is the acceleration due to gravity, 
$p'_{0}$ is the pressure in the upper fluid and
$p^{\star}$ is a composite of 
the magnetostrictive and  fluid-magnetic pressures.
The calculation 
\begin{align*}
\left(\mathbf{M}(\mathbf{H})\cdot
(\partial_{x},\partial_{y},\partial_{z})^{\mathrm T}\right)
{\mathbf{H}}
=
m(|{\mathbf{H}}|)\,
\grad\,|{\mathbf{H}}|
=
\grad
\left(
\int_{0}^{|\mathbf{H}|}
m(t)
\, \mathrm{d} t
\right)
\end{align*}
shows that these equations are equivalent to
\begin{align}
-(p'_{0}+\rho^\prime gy) =C'_{0} 
, 
\qquad\qquad 
\mu_{0}
\int_{0}^{|\mathbf{H}|}
m(t)
\, \mathrm{d} t
-(p^{\star}
+\rho gy) =C_{0}
,
 \label{eq: C1}
\end{align}
where  $C'_{0},C_{0}$ are constants. 
The ferrohydrostatic boundary condition 
is given by 
$$
p^{\star}
+\frac{\mu_{0}}{2}
({\mathbf{M}(\mathbf{H})\cdot \mathbf n})^{2}= p'_{0}+2\kappa\sigma
$$
for $y=\eta(x,z)$ (Rosensweig \cite[Section 5.2]{Rosensweig}), 
in which $\sigma>0$ is the coefficient of surface tension and 
\begin{align*}
2\kappa
& =-\cfrac{\eta_{{xx}}(1+\eta_{{z}}^{2})+\eta_{{zz}}(1+\eta_{{x}}^{2})
-2\eta_{{x}}\eta_{{z}}\eta_{{xz}}}{(1+\eta_{{x}}^{2}+\eta_{{z}}^{2})^{3/2}}
\end{align*}
is the mean curvature of the interface. 
Using \eqref{eq: C1}, we find that
\begin{align*}
\dfrac{\mu_{0}}{2}(\mathbf{M}(\mathbf{H})\cdot \mathbf n)^{2}
+{\mu_{0}}
\int_{0}^{|\mathbf{H}|}
m(t)
\, \mathrm{d} t
+C
+ ({\rho^\prime-\rho})g\eta
-2\sigma\kappa& = 0
,
\end{align*}
where $C=C'_{0}-C_{0},$ or equivalently  
\begin{align}
&
C
+\mu_{0}
\sqrt{1+\eta_{x}^{2}+\eta_{z}^{2}}
(\phi^\prime_{y} \phi^\prime_{n}
-\mu(|\grad\,\phi|)\phi_{y} \phi_{n}) 
\nonumber
\\
&\qquad\mbox{}
- ({\rho-\rho^\prime}) g\eta
-2\sigma\kappa 
-\mu_{0}
\left(\frac{1}{2}{|{\grad\,\phi^\prime}|^{2}-M(|{\grad\,\phi}|)}\right)
=0
\label{eq: bcc3}
\end{align}
for $y=\eta(x,z)$ with
\begin{align*}
M(s)
=
\int_{0}^{s}t\mu(t)\, \mathrm{d} t.
\end{align*} 
The requirement that a uniform magnetic field and flat interface solves the physical problem, that is
$(\eta_0,\phi^\prime_0,\phi_0)=(0,\mu(h) h y,h y)$
is a solution to \eqref{eq: Laplaciana}, \eqref{eq: bc2} and \eqref{eq: bcc3},  
leads us to choose\linebreak
$C=-\mu_{0}M(h)-\mu_{0}\mu(h)(\mu(h)-1)h^{2}/2$;
we write $(\tilde{\eta},\tilde{\phi}^\prime,\tilde{\phi}) = (\eta,\phi^\prime,\phi)+(\eta_0,\phi^\prime_0,\phi_0)$
(so that $(\tilde{\eta},\tilde{\phi}^\prime,\tilde{\phi})=(0,0,0)$ is the `trivial' solution)
and drop the tildes for notational simplicity.

The next step is to introduce dimensionless variables
\[
(\hat x,\hat y,\hat{z})=\frac{\mu_0h^2}{\sigma}(x,y,z),
\quad
\hat\eta:= \frac{\mu_0h^2}{\sigma}\eta,
\quad
\hat \phi^\prime:= \frac{\mu_0h}{\sigma}\phi^\prime,
\quad
\hat \phi:= \frac{\mu_0h}{\sigma}\phi
\]
and functions
\[
 \hat\mu(s):= \mu(hs),\qquad \hat M(s):=M(hs).
\]
We find that
\begin{align}
\phi^\prime_{xx}+\phi^\prime_{yy}+\phi^\prime_{zz} & =0, \qquad\qquad \eta(x,z)<y<\tfrac{1}{\beta_0},\label{eq: NLAPLACEO}\\
\divergence(\mu(|\grad(\phi +y)|)\grad(\phi +y)) & =0, \qquad\qquad -\tfrac{1}{\beta_0} < y < \eta(x,z),\label{eq: NLAPLACEU}
\end{align}
with boundary conditions
\begin{align}
\phi^\prime_{y}
& = 0, \qquad\qquad y=\tfrac{1}{\beta_0}, 
\label{eq: ENBC4}
\\
\mu(|\grad(\phi +y)|)( \phi_{y}+1)-\mu(1) 
& = 0, \qquad\qquad y=-\tfrac{1}{\beta_0},\label{eq: ENBC5}
\\
\phi^\prime-\phi
+\left({\mu(1)-1}\right)\eta& =0,\qquad\qquad
y=\eta(x,z),
\label{eq: ENBC1}\\
(\phi^\prime+\mu(1) y)_{n}
-\mu(|\grad(\phi +y)|)(\phi +y)_{n}
& =0,\qquad\qquad
y=\eta(x,z),
\label{eq: ENBC2}
\end{align}
and
\begin{align}
&
-M(1)-\mu(1)\left(\frac{1}{2}\mu(1)-1\right)- \gamma \eta
-2\sigma\kappa
\nonumber
\\
&\quad\mbox{}-\frac{1}{2}{|{\grad(\phi^\prime+\mu(1) y)}|^{2}+M(|{\grad(\phi+y)}|)}
\nonumber
\\
&\quad\mbox{}{
+
\sqrt{1+\eta_{x}^{2}+\eta_{z}^{2}}
({\phi^\prime_{y}+\mu(1)})
(\phi^\prime+\mu(1) y)_{n}
}
\nonumber
\\
&\quad\mbox{}{
-\sqrt{1+\eta_{x}^{2}+\eta_{z}^{2}}
\mu(|\grad(\phi +y)|)
({\phi_{y}+1})
(\phi+y)_{n}
}
 = 0, \qquad\qquad
y=\eta(x,z),
\label{eq: ENBC3}
\end{align}
where
\begin{equation}
 \alpha=\frac{(\rho-\rho^\prime)gd}{\mu_0h^2}, \qquad \beta=\frac{\sigma}{\mu_0h^2d}, \qquad \gamma=\alpha\beta
 \label{eq: defns of parameters}
 \end{equation}
 and the hats have been dropped for notational simplicity.
 
We seek periodic solutions to \eqref{eq: NLAPLACEO}--\eqref{eq: ENBC3} satisfying
$$
\eta(\mathbf{x} + \mathbf{l})=\eta(\mathbf{x})
,
\qquad
\phi^\prime(\mathbf{x} + \mathbf{l},y)=\phi^\prime(\mathbf{x},y)
,
\qquad
\phi(\mathbf{x} + \mathbf{l},y)=\phi(\mathbf{x},y)
$$
for every $\mathbf{l}\in \mathscr{L}$ (with a slight abuse of notation),
where $\mathbf{x}=(x,z)$ and $\mathscr{L}$ is the lattice given by
$$
\mathscr{L}=\left\{ m\mathbf{l}_{1}+n\mathbf{l}_{2}:\ m,n\in\mathbb{Z}\right\}
$$
with $\left|{\mathbf{l}_{1}}\right|=\left|{\mathbf{l}_{2}}\right|.$ 
Choose $\mathbf{k}_{1},\mathbf{k}_{2}$ with $\mathbf{k}_{i}\cdot\mathbf{l}_{j}=2\pi\delta_{ij}$ for $i,j=1,2$ and define the dual lattice $\mathscr{L}^{\star}$ to $\mathscr{L}$ by
$$
\mathscr{L}^{\star}=\left\{ {m\mathbf{k}_{1}+n\mathbf{k}_{2}:\ m,n\in\mathbb{Z}}\right\},
$$
 so that our periodic functions can be written as 
 $$
\eta(\mathbf{x})
=
\sum_{\mathbf{k}\in\mathscr{L}^{\star}}
\eta_{_{\mathbf{k}}}\mathrm{e}^{\mathrm{i}\mathbf{k}\cdot\mathbf{x}},
\qquad
\phi^\prime(\mathbf{x},y)
=
\sum_{\mathbf{k}\in\mathscr{L}^{\star}}
\phi^\prime_{_{\mathbf{k}}}(y)\mathrm{e}^{\mathrm{i}\mathbf{k}\cdot\mathbf{x}},
\qquad
\phi(\mathbf{x},y)
=
\sum_{\mathbf{k}\in\mathscr{L}^{\star}}
\phi_{_{\mathbf{k}}}(y)\mathrm{e}^{\mathrm{i}\mathbf{k}\cdot\mathbf{x}}
,
$$
where $\eta_{_{-\mathbf{k}}}=\bar{\eta}_{_{\mathbf{k}}},\phi^\prime_{_{-\mathbf{k}}}=\bar{\phi^\prime}_{_{\mathbf{k}}},\phi_{_{-\mathbf{k}}}=\bar{\phi}_{_{\mathbf{k}}}$. 
We are especially interested in three periodic patterns, namely rolls, rectangles and hexagons
(see Figure \ref{fig: lattices}).
\begin{enumerate}
\item\vspace{-\baselineskip}
For rolls we seek  functions that are independent of the $z$-direction   
and we choose\linebreak
$\mathbf{l}=(\frac{2\pi}{\omega},0),$  so that the dual lattice $\mathscr{L}^{\star}$ is 
generated by $\mathbf{k}=(\omega,0)$ 
and the periodic base cell  
is given by 
$$
\left\{ 
x:| x|<\frac{\pi}{\omega}
\right\}.
$$
Furthermore, the $z$-independent versions of  equations \eqref{eq: NLAPLACEO}--\eqref{eq: ENBC3} 
are invariant under the  reflection $x\mapsto -x$ (which corresponds to a rotation through ${\pi}$ in the $(x,z)$-plane).
\item
For rectangles we choose $\mathbf{l}_{1}=(\frac{2\pi}{\omega},0)$ and $\mathbf{l}_{2}=(0,\frac{2\pi}{\omega})$,  
so that the dual lattice $\mathscr{L}^{\star}$ is generated by $\mathbf{k}_{1}=({\omega},0)$  and $\mathbf{k}_{2}=(0,{\omega})$  
and the periodic base cell  
is given by 
$$
\left\{ 
(x,z):| x|,\:| z|<\frac{\pi}{\omega}\right\}.
$$
Furthermore, equations \eqref{eq: NLAPLACEO}--\eqref{eq: ENBC3} 
are invariant under rotations through  $\pi/2$ in the $(x,z)$-plane.
\item 
For hexagons we choose $\mathbf{l}_{1}=\frac{2\pi}{\omega}(1,-\frac{1}{\sqrt{3}})$ and 
$\mathbf{l}_{2}=\frac{2\pi}{\omega}(0,\frac{2}{\sqrt{3}}),$ 
so that we obtain an additional periodic direction 
$\mathbf{l}_{3}=\mathbf{l}_{1}+\mathbf{l}_{2}=\frac{2\pi}{\omega}(1,\frac{1}{\sqrt{3}})$.
The dual lattice 
$\mathscr{L}^{\star}$ is generated by 
$\mathbf{k}_{1}=(\omega,0)$  and $\mathbf{k}_{2}=\omega(\frac{1}{2},\frac{\sqrt{3}}{2})$
and the periodic base cell  
is given by
$$
\left\{ 
(x,z):|x|<\frac{2\pi}{\omega},\:
\left|{x-\sqrt{3}z}\right|<\frac{4\pi}{\omega}
\mbox{ and }
\left|{x+\sqrt{3}z}\right|<\frac{4\pi}{\omega}
\right\}.
$$ 
Furthermore, equations \eqref{eq: NLAPLACEO}--\eqref{eq: ENBC3} 
are invariant under rotations through  ${\pi}/{3}$ in the $(x,z)$-plane.

\end{enumerate}\vspace{-\baselineskip}
The mathematical problem is thus to solve \eqref{eq: NLAPLACEO}--\eqref{eq: ENBC3}  
for periodic functions $\eta, \phi^\prime$ and $\phi$ in the domains $\Gamma$, $\Omega_\mathrm{per}^\prime$ and $\Omega_\mathrm{per},$     
where 
$$
\Omega_\mathrm{per}^\prime:=
\left\{ 
(x,y,z):(x,z)\in\Gamma
\right\}\cap \Omega^\prime,
\qquad
\Omega_\mathrm{per}:=
\left\{ 
(x,y,z):(x,z)\in\Gamma
\right\}\cap \Omega
$$
and $\Gamma$ is the parallelogram defined by $\mathbf{l}_{1}$ and $\mathbf{l}_{2}$ (or by $\mathbf{l}$ in the case of rolls).

\begin{figure}[H] 
\centering\includegraphics[scale=0.9]{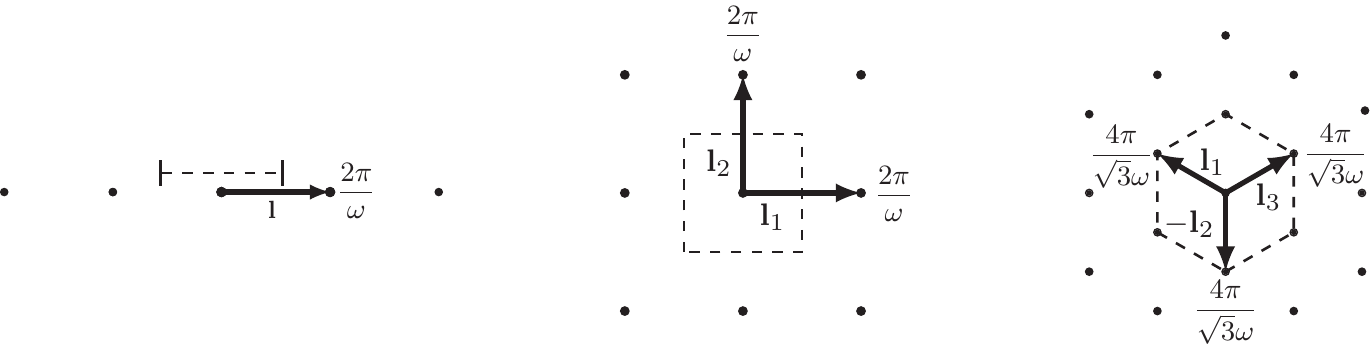}
\caption{\label{fig: lattices}
The  lattice $\mathscr{L}$ and periodic base cell  
for rolls (left), rectangles (centre) and hexagons (right). }
\end{figure}

\subsection{Dirichlet-Neumann formalism} \label{sec: DN formalism}

The \emph{Dirichlet-Neumann operator} $G^\prime$ for the upper fluid domain 
$\{\eta(x,z)<y<\tfrac{1}{\beta_0}\}$
is defined as follows.
Fix $\Phi^\prime=\Phi^\prime(x,z),$ solve the \emph{linear} boundary-value problem
\begin{align}
\phi^\prime_{xx} +\phi^\prime_{yy}+\phi^\prime_{zz} &=0,\hphantom{\Phi^\prime}\quad\qquad\eta<y<\tfrac{1}{\beta_0},
\label{eq: Laplace for G-oben}
\\
\phi^\prime&=\Phi^\prime,\hphantom{0}\quad\qquad y=\eta, 
\\
\phi^\prime_{y}&=0,\hphantom{\Phi^\prime}\quad\qquad y=\tfrac{1}{\beta_0} 
.
\label{eq: Bc for G-oben}
\end{align}
and define
\begin{align}
G^\prime(\eta,\Phi^\prime)& =-(1+\eta_{x}^{2}+\eta_{z}^{2})^\frac{1}{2}
\phi^\prime_{n}\big|_{y=\eta}
 =-(
\phi^\prime_{y}-\eta_{x}\phi^\prime_{x}-\eta_{z}\phi^\prime_{z}
)\big|_{y=\eta}.
\label{eq: N(B)}
\end{align}
The \emph{Dirichlet-Neumann operator} $G$ for the lower fluid domain
$\{-\tfrac{1}{\beta_0} < y < \eta(x,z)\}$ is similarly defined as
\begin{align}
G(\eta,\Phi)& =(1+\eta_{x}^{2}+\eta_{z}^{2})^\frac{1}{2}\mu(|\grad(\phi+y)|)
\phi_{n}\big|_{y=\eta}\nonumber\\
& =\mu(|\grad(\phi+y)|)(
\phi_{y}-\eta_{x}\phi_{x}-\eta_{z}\phi_{z}
)\big|_{y=\eta},
\label{eq: N(A)}
\end{align}
where $\phi$ is the solution of the (in general \emph{nonlinear}) boundary-value problem
\begin{align}
\textnormal{div}(\mu(|\grad(\phi+y)|)\grad(\phi+y))&=0, \hphantom{\mu_1}\quad\qquad-\tfrac{1}{\beta_0}<y<\eta,
\label{eq: Laplace for G-unten}
\\
\phi&=\Phi, \hphantom{\mu_1}\quad\qquad y=\eta,
\\
\mu(|\grad(\phi+y)|)(\phi_y+1)&=\mu(1), \quad\qquad y=-\tfrac{1}{\beta_0}.
\label{eq: Bc for G-unten}
\end{align}
It is also convenient to introduce auxiliary operators $H^\prime$ and $H$ given by
\begin{equation}
H^\prime(\eta,\Phi^\prime) = \phi^\prime_y|_{y=\eta}, \qquad H(\eta,\Phi) = \phi_y|_{y=\eta}, \label{eq: Defn of the Ws}
\end{equation}
where $\phi^\prime$ and $\phi$ are the solutions to the boundary-value problems \eqref{eq: Laplace for G-oben}--\eqref{eq: Bc for G-oben}
and \eqref{eq: Laplace for G-unten}--\eqref{eq: Bc for G-unten}.
Using this Dirichlet-Neumann formalism, we can recast the governing
equations \eqref{eq: NLAPLACEO}--\eqref{eq: ENBC3} in terms of the variables $\eta$,
$\Phi^\prime=\phi^\prime|_{y=\eta}$ and $\Phi = \phi|_{y=\eta}$ as 
\begin{align}
\Phi^\prime- \Phi +(\mu(1)-1) \eta& =0,\label{eq: NNBC1} \\
G^\prime(\eta,\Phi^\prime) + G(\eta,\Phi)
+(\mu^{\star} - \mu(1))& =0\label{eq: NNBC2}
\end{align}
and
\begin{align}
&-\gamma\eta+ 
\nabla\cdot\left({ 
\cfrac{\nabla \eta}{\sqrt{1+ |{\nabla{\eta}}|^{2}}}
}\right)
+\frac{1}{2}
\left({1+\left|{\nabla\eta}\right|^{2}}\right)
H^\prime(\eta,\Phi^\prime)^{2}
-\frac{1}{2}|{\nabla\Phi^\prime}|^{2}\Big|_{y=\eta}
\nonumber
\\
&\qquad\mbox{}
-\mu(1)G^\prime(\eta,\Phi^\prime) - G(\eta,\Phi)-(\mu^{\star} - \mu(1))
\nonumber
\\
&\qquad\mbox{}
+(M^{\star}-M(1))-\mu^{\star}H(\eta,\Phi)-H(\eta,\Phi) G(\eta,\Phi)
=0
,\label{eq: NNBC3}
\end{align}
in which 
$\nabla=(\partial_{x},\partial_{z})^{\mathrm{T}},$ $M(s)=\displaystyle\int_0^s t\mu(t)\,\mathrm{d}t$,
\begin{align*}
\mu^{\star}
&=
\mu
\!
\left(
\!
\left({
|\nabla\Phi|^{2}
+2(1-\nabla\eta\cdot\nabla\Phi)H(\eta,\Phi)
+(1+|\nabla\eta|^{2})H(\eta,\Phi)^{2}
+1
}
\right)^{{1}/{2}}
\right),
\\
M^{\star}
&=
M
\!
\left(
\!
\left({
|\nabla\Phi|^{2}
+2(1-\nabla\eta\cdot\nabla\Phi)H(\eta,\Phi)
+(1+|\nabla\eta|^{2})H(\eta,\Phi)^{2}
+1
}
\right)^{{1}/{2}}
\right).
\end{align*}

We study equations \eqref{eq: NNBC1}--\eqref{eq: NNBC3} in the standard Sobolev spaces 
$$
H_{\mathrm{per}}^{r}(\Gamma)
=
\left\{ 
\zeta
=
\sum_{\mathbf{k}\in\mathscr{L}^{\star}}
\zeta_{_{\mathbf{k}}}\mathrm{e}^{\mathrm{i}\mathbf{k}\cdot\mathbf{x}}: \zeta_{-\mathbf{k}}=\overline{\zeta_\mathbf{k}},
\,\left\lVert 
\zeta
\right\rVert_{r}
<
\infty
\right\}, \qquad
\left\lVert \zeta\right\rVert_{r}^2 
:= C(\Gamma)\!\!
\sum_{\mathbf{k}\in\mathscr{L}^{\star}}
\left( 1 + |\mathbf{k}|^2 \right)^{r} 
|\zeta_{_{\mathbf{k}}}|^2,
$$
where $\mathscr{L}^{\star}$ is the dual lattice to $\mathscr{L}$, and their subspaces 
\begin{align*}
\bar{H}_{\mathrm{per}}^{r}(\Gamma) 
&= 
\left\{
\zeta \in H^{r}_{\mathrm{\mathrm{per}}}(\Gamma) : 
\zeta_{_{\mathbf{0}}}=0
\right\}
\end{align*}  
consisting of functions with zero mean (the value of the normalisation constant $C(\Gamma)$ is $2\pi/\omega$ for rolls, $4(\pi/\omega)^{2}$ for rectangles and
$8/\sqrt{3}(\pi/\omega)^{2}$ for hexagons).
In Section \ref{ch: Mathematical description}\ref{sec: DiNOs} below
we establish the following theorem. (A function is `analytic at the origin' if is defined
and analytic in a neighbourhood of the origin; in particular it has a convergent
Maclaurin series.)

\begin{theorem}\label{thm: NEins}
Suppose that $s>5/2$. Formulae \eqref{eq: N(B)}, \eqref{eq: N(A)} and \eqref{eq: Defn of the Ws} define 
mappings $G^\prime$, $G: H^{s}_{\mathrm{per}}(\Gamma) \times H^{s-1/2}_{\mathrm{per}}(\Gamma) \rightarrow
 H^{s-3/2}_{\mathrm{per}}(\Gamma)$ and $H^\prime$, $H: H^{s}_{\mathrm{per}}(\Gamma) \times H^{s-1/2}_{\mathrm{per}}(\Gamma) \rightarrow
 H^{s-3/2}_{\mathrm{per}}(\Gamma)$ 
which are  analytic at the origin.
\end{theorem}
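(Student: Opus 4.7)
The plan is to flatten each of the two boundary-value problems by an $\eta$-dependent change of variable, apply the analytic implicit function theorem in the fixed domain to obtain an analytic solution operator $(\eta,\Phi^\prime)\mapsto \phi^\prime$ (respectively $(\eta,\Phi)\mapsto \phi$) into a Sobolev space of the flat strip, and then recover $G^\prime,G,H^\prime,H$ as compositions of this solution operator with boundary trace and derivative operators, each of which is itself analytic. The Sobolev exponent $s>5/2$ will enter at exactly one place, where we need $s-3/2>1$ in order that $H^{s-3/2}_{\mathrm{per}}(\Gamma)$ be a Banach algebra and, more importantly, that analytic scalar functions compose analytically with $H^{s-1}$-functions on the flat strip.

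For the upper domain I would flatten using a map of the form $(x,y^\prime,z)\mapsto (x,(1-\beta_0\eta(x,z))y^\prime+\eta(x,z),z)$ sending $\Gamma\times(0,1/\beta_0)$ onto $\Omega_{\mathrm{per}}^\prime$, so that Laplace's equation \eqref{eq: Laplace for G-oben} is transformed into a linear second-order elliptic equation on the fixed strip whose coefficients are polynomial in $\eta$, $\eta_x$, $\eta_z$ and $(1-\beta_0\eta)^{-1}$. Standard Sobolev-multiplier estimates (using that $H^{s-1}_{\mathrm{per}}(\Gamma)$ is a Banach algebra) show that the coefficient assignment is analytic in $\eta\in H^s_{\mathrm{per}}(\Gamma)$ near the origin. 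At $\eta=0$ the flattened problem is Laplace's equation on a fixed strip with Dirichlet data on $y^\prime=0$ and homogeneous Neumann data on $y^\prime=1/\beta_0$, which is uniquely solvable on $H^s$ of the strip via Fourier series. The analytic implicit function theorem then yields an analytic solution map $(\eta,\Phi^\prime)\mapsto \phi^\prime$ into $H^s$ of the flat strip, and composing with the trace of $\phi^\prime_y$ and with the multiplication-and-differentiation operator producing the right-hand side of \eqref{eq: N(B)} gives analyticity of $G^\prime$ and $H^\prime$ into $H^{s-3/2}_{\mathrm{per}}(\Gamma)$.

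For the lower domain the same flattening is used, but the transformed equation is nonlinear. Writing it in the schematic form $\mathcal{F}(\eta,\phi)=0$, the linearisation at $(\eta,\phi)=(0,0)$ is
$$\mathrm{d}_2\mathcal{F}[0,0]\psi=\mu(1)(\psi_{xx}+\psi_{zz})+(\mu(1)+\dot\mu(1))\psi_{y^\prime y^\prime},$$
which is uniformly elliptic precisely under the standing assumption $\mu(1)+\dot\mu(1)>0$; together with the Dirichlet condition on $y^\prime=1/\beta_0$ and the linearised Neumann condition on $y^\prime=0$, this is an isomorphism between the appropriate Sobolev spaces of the strip. Because $\mu$ is analytic and $s>5/2$, the nonlinear superposition operator $u\mapsto \mu(|\grad(u+y)|)$ is an analytic map on a neighbourhood of the origin in $H^{s-1}$ of the strip, so $\mathcal{F}$ is analytic in $(\eta,\phi)$. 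The analytic implicit function theorem again delivers an analytic solution map $(\eta,\Phi)\mapsto \phi$, and the formulae \eqref{eq: N(A)} and \eqref{eq: Defn of the Ws} express $G$ and $H$ as analytic compositions with it.

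The step I expect to be the main obstacle is the verification that the substitution operator $u\mapsto \mu(|\grad(u+y)|)$ is genuinely analytic on the flat strip, together with all the multiplier estimates needed to handle the coefficients produced by the flattening diffeomorphism, uniformly in a neighbourhood of $\eta=0$. This is the one point where the hypothesis $s>5/2$ is used sharply (via the Banach-algebra property of $H^{s-3/2}_{\mathrm{per}}(\Gamma)$ and the Sobolev composition lemma for analytic scalar functions); the linear case $G^\prime$, $H^\prime$ is comparatively routine and could even be carried out by an explicit Taylor expansion in $\eta$ in the spirit of Craig and Nicholls \cite{CraigNicholls00}.
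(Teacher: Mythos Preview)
Your proposal is correct and follows essentially the same route as the paper: flatten each domain by an $\eta$-dependent vertical change of variable, apply the analytic implicit-function theorem to the transformed boundary-value problem (using that the linearisation at the origin is the elliptic operator $\mu_1(\partial_{xx}+\partial_{zz})+(\mu_1+\dot\mu_1)\partial_{yy}$ with mixed Dirichlet/Neumann data, an isomorphism between the relevant Sobolev spaces), and then read off $G,G',H,H'$ as analytic compositions of the resulting solution map with traces and multiplication operators. The paper carries this out explicitly as Theorems~\ref{thm: uanal}, \ref{def: G unten} and \ref{def: G oben}, and then computes the Taylor coefficients recursively; your remark that the linear upper problem could alternatively be handled by direct expansion \`a la Craig--Nicholls is also in line with the paper's subsequent explicit series calculations.
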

Define
\begin{equation}
X_0 :=
H^{s+1/2}_{\mathrm{per}}(\Gamma)
\times
\bar{H}^{s}_{\mathrm{per}}(\Gamma)
\times
H^{s}_{\mathrm{per}}(\Gamma),
\qquad
 Y_0
:=
H^{s}_{\mathrm{per}}(\Gamma)
\times
\bar{H}^{s-1}_{\mathrm{per}}(\Gamma)
\times
H^{s-3/2}_{\mathrm{per}}(\Gamma) 
\label{eq: Defns of U0, B0}
\end{equation}
for $s>5/2$. Using Theorem \ref{thm: NEins} and the fact that $H^r(\Gamma)$ is a Banach algebra for $r>1$, we find that
the left-hand sides of equations \eqref{eq: NNBC1}--\eqref{eq: NNBC3}  
define a function ${\mathcal G}:\mathbb{R}\times X_0 \to Y_0$ which is analytic at the origin. (A straightforward calculation shows that
$$
\int_\Gamma \left(G^\prime(\eta,\Phi^\prime) + G(\eta,\Phi)+(\mu^{\star} - \mu(1))\right)=0
$$
and explains the choice of functions with zero mean in the second component of $Y_0$; using functions
with zero mean in the second component of $X_0$ on the other hand ensures that the kernel of the linear operator
$\mathrm{d}_{2}{\mathcal G}[\gamma,0]$  does not contain any constant terms for any $\gamma\in\mathbb{R}$).
The 
mathematical problem is thus to solve\pagebreak
\begin{equation}\label{eq: Nfinal}
{\mathcal G}(\gamma,(\eta,\Phi^\prime,\Phi))=0,
\end{equation}
for $(\gamma,(\eta,\Phi^\prime,\Phi)) \in {\mathbb R} \times V_0$, where $V_0$ is a neighbourhood of the origin in $X_0$ and
${\mathcal G}(\gamma, 0)=0$ for all $\gamma \in \mathbb{R}.$  Observe that this problem exhibits rotational symmetry: it
is invariant under rotations through respectively $\pi$, $\frac{\pi}{2}$ and $\frac{\pi}{3}$
for rolls, rectangles and hexagons,
and one may therefore replace $X_0$ and $Y_0$ by their subspaces of functions that are invariant under these rotations
(denoted by  $X_\mathrm{sym}$ and $Y_\mathrm{sym}$).

 \subsection{Analyticity of the Dirichlet-Neumann operators}
 \label{sec: DiNOs}
 
We study the boundary-value problems \eqref{eq: Laplace for G-unten}--\eqref{eq: Bc for G-unten} 
and \eqref{eq: Laplace for G-oben}--\eqref{eq: Bc for G-oben} by transforming them into equivalent problems
in fixed domains (cf. Nicholls and Reitich \cite{NichollsReitich01a} and 
Twombly and Thomas \cite{TwomblyThomas83}). 
The change of variable
\begin{equation}
\tilde{y}
=\cfrac{y-\eta}{1+\beta_0\eta}, \qquad
u(x,\tilde{y},z) =\phi(x,y,z)\label{eq:TT below}
\end{equation}
transforms the variable domain $\Omega_\mathrm{per}$ into the fixed domain   
$\Sigma=\left\{{(x,y,z): (x,z)\in\Gamma, y \in (-\tfrac{1}{\beta_0},0)}\right\}$
and the boundary-value problem \eqref{eq: Laplace for G-unten}--\eqref{eq: Bc for G-unten} into
\begin{align}
\textnormal{div} \big(\mu^\dag(\grad(u+y)-(F_1(\eta,u),F_3(\eta,u),F_2(\eta,u))^\mathrm{T})\big)
&
=0,
&&-\tfrac{1}{\beta_0}<y<0,
\label{eq: Laplace for G-unten a. t.}
\\
u-\Phi
&
=0,
&&y=0
,
\label{eq: Bc for G-unten a. t.Stern}
\\
\mu^\dag\big(\grad(u+y).(0,1,0)^\mathrm{T}-F_3(\eta,u)\big)-\mu(1)
&
=0,
&&y=-\tfrac{1}{\beta_0},
\label{eq: Bc for G-unten a. t.}
\end{align}
where
$$
 F_{1}(\eta, u)  =-\eta u_{x} +(1+ \beta_0 y)\eta_{x} u_{y},\qquad
 F_{2}(\eta, u) =-\eta u_{z} +(1+ \beta_0 y)\eta_{z} u_{y},
$$
$$
 F_{3}(\eta, u) =\frac{\beta_0\eta u_{y}}{1+\beta_0\eta}
+(1+ \beta_0 y)(\eta_{x} u_{x}+ \eta_{z} u_{z})
-\frac{(1+\beta_0 y)^{2}}{1+\beta_0\eta}
(\eta^{2}_{x}+\eta^{2}_{z})u_{y}
$$
and
$$\mu^\dag = \mu\left(\left| \frac{1}{1+\beta_0\eta}\big(\grad u - (F_1(\eta,u),0,F_2(\eta,u))^\mathrm{T}\big)+(0,1,0)^\mathrm{T}\right|\right)$$
(we have again dropped the tildes for notational simplicity).

\begin{theorem}\label{thm: uanal}
Suppose that $s> 5/2.$
There exist open neighbourhoods $V$ and $U$ of the origin in respectively
$H_\mathrm{per}^s(\Gamma) \times H_\mathrm{per}^{s-1/2}(\Gamma)$ and
$H_\mathrm{per}^s(\Sigma)$ such that
the boundary-value problem \eqref{eq: Laplace for G-unten a. t.}--\eqref{eq: Bc for G-unten a. t.}
has a unique solution $u=u(\eta,\Phi)$ in $U$ for each $(\eta,\Phi) \in V$. Furthermore $u(\eta,\Phi)$ depends
analytically upon $\eta$ and $\Phi$.

\end{theorem}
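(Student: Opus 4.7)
The plan is to reformulate the transformed boundary-value problem as a single operator equation
$$\mathcal{F}(\eta,\Phi,u)=0,$$
where $\mathcal{F}\colon H^s_{\mathrm{per}}(\Gamma)\times H^{s-1/2}_{\mathrm{per}}(\Gamma)\times H^s_{\mathrm{per}}(\Sigma)\to H^{s-2}_{\mathrm{per}}(\Sigma)\times H^{s-1/2}_{\mathrm{per}}(\Gamma)\times H^{s-3/2}_{\mathrm{per}}(\Gamma)$ collects the interior divergence equation \eqref{eq: Laplace for G-unten a. t.}, the Dirichlet condition \eqref{eq: Bc for G-unten a. t.Stern} at $y=0$ and the magnetic Neumann condition \eqref{eq: Bc for G-unten a. t.} at $y=-1/\beta_0$. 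The trivial solution $(\eta,\Phi,u)=(0,0,0)$ is a zero of $\mathcal{F}$, and I intend to invoke the analytic implicit function theorem (as in Buffoni and Toland \cite[Chapter 4]{BuffoniToland}) at this point.

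The first step is to verify that $\mathcal{F}$ is analytic at the origin. The coefficient functions $F_1,F_2,F_3$ are polynomial in $\eta,\nabla\eta,\grad u$ and in $(1+\beta_0\eta)^{-1}$, so they are analytic maps into $H^{s-1}_{\mathrm{per}}(\Sigma)$ on a neighbourhood where $1+\beta_0\eta$ stays bounded away from zero, using the Banach-algebra property of $H^{s-1}(\Sigma)$ and $H^{s-1}(\Gamma)$ (valid since $s>5/2$). At the origin the argument of $\mu$ in $\mu^\dag$ equals $1$, and $\mu$ is analytic on $(0,\infty)$, so a standard Nemytskii/composition argument in Sobolev algebras shows that $\mu^\dag$ depends analytically on $(\eta,u)$ into $H^{s-1}_{\mathrm{per}}(\Sigma)$. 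Taking divergences and boundary traces then yields analyticity of $\mathcal{F}$ into the stated target space.

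The second step is to identify the linearisation $L:=\mathrm{d}_u\mathcal{F}[0,0,0]$ and prove that it is an isomorphism. At the base point $F_1,F_2,F_3$ and all their $u$-derivatives vanish, $\mu^\dag=\mu_1$, and a direct computation gives
$$Lu=\bigl(\mu_1(u_{xx}+u_{zz})+(\mu_1+\dot{\mu}_1)u_{yy},\;u|_{y=0},\;(\mu_1+\dot{\mu}_1)u_y|_{y=-1/\beta_0}\bigr).$$
The hypothesis $\mu_1+\dot{\mu}_1>0$ makes this operator uniformly elliptic with mixed Dirichlet/Neumann boundary conditions. To invert it, I would expand in the Fourier basis $\{\mathrm{e}^{\mathrm{i}\mathbf{k}\cdot\mathbf{x}}\}_{\mathbf{k}\in\mathscr{L}^\star}$, reducing to the family of two-point boundary-value problems
$$(\mu_1+\dot{\mu}_1)\hat{u}_{\mathbf{k}}''-\mu_1|\mathbf{k}|^2\hat{u}_{\mathbf{k}}=\hat{f}_{\mathbf{k}},\qquad \hat{u}_{\mathbf{k}}(0)=\hat{g}_{\mathbf{k}},\qquad \hat{u}_{\mathbf{k}}'(-1/\beta_0)=\hat{h}_{\mathbf{k}}/(\mu_1+\dot{\mu}_1),$$
which are uniquely solvable for every $\mathbf{k}$ (including $\mathbf{k}=\mathbf{0}$, thanks to the Dirichlet trace at $y=0$) with solutions built from $\cosh(S_1|\mathbf{k}|y)$ and $\sinh(S_1|\mathbf{k}|y)$. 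Tracking the $|\mathbf{k}|$-dependence of the resulting Fourier multipliers in the standard way yields the bound establishing $L$ as a bounded bijection, hence a toplinear isomorphism.

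Once Steps 1 and 2 are in hand, the analytic implicit function theorem supplies open neighbourhoods $V$ and $U$ of the origin in $H^s_{\mathrm{per}}(\Gamma)\times H^{s-1/2}_{\mathrm{per}}(\Gamma)$ and $H^s_{\mathrm{per}}(\Sigma)$, together with a unique analytic solution map $u=u(\eta,\Phi)\colon V\to U$. I expect Step~1 to be the main obstacle: the composition defining $\mu^\dag$ involves both the quotient $(1+\beta_0\eta)^{-1}$ and the nonlinear function $\mu$ applied to a genuinely $(\eta,u)$-dependent argument, so some care is needed to verify that each constituent operation preserves analyticity into the Sobolev algebra $H^{s-1}(\Sigma)$, combining the algebra property with analyticity of $\mu$ on a neighbourhood of $1$.
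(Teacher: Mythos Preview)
Your proposal is correct and follows essentially the same route as the paper: package the three transformed equations into a single analytic operator equation, compute the $u$-linearisation at the origin (your expression agrees with the paper's $\mu_1(u_{xx}+S_1^{-2}u_{yy}+u_{zz})$ etc.\ since $\mu_1 S_1^{-2}=\mu_1+\dot\mu_1$), show it is an isomorphism, and apply the analytic implicit-function theorem. The paper is terser on both the analyticity of $\mathcal{F}$ and the invertibility of the linearisation (it simply invokes ``standard existence and regularity theory for elliptic linear boundary-value problems''), whereas you spell out the Banach-algebra/Nemytskii argument and a Fourier-mode inversion, but the underlying strategy is identical.
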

\begin{proof}
Write the left-hand sides of equations \eqref{eq: Laplace for G-unten a. t.}--\eqref{eq: Bc for G-unten a. t.}
as
$${\mathcal H}(u,\eta,\Phi)=0,$$
and observe that ${\mathcal H}: H_\mathrm{per}^s(\Sigma) \times H_\mathrm{per}^s(\Gamma) \times
H_\mathrm{per}^{s-1/2}(\Gamma)$ is analytic at the origin with
$${\mathcal H}(0,0,0)=0.$$
Furthermore, the calculation
$$\mathrm{d}_1{\mathcal H}[0,0,0](u)=\begin{pmatrix} \mu_1 (u_{xx}+S_1^{-2} u_{yy}+u_{zz} ) \\
u|_{y=0} \\
\mu_1 S_1^{-2} u_y|_{y=-\frac{1}{\beta_0}}\end{pmatrix},$$
where
$\mu_1=\mu(1)$, $\dot{\mu}_1=\dot{\mu}(1)$ and
$S_1=\left(\mu_1/(\mu_1+\dot{\mu}_1)\right)^{1/2}$,
and standard existence and regularity theory for elliptic linear boundary-value problems show that
$\mathrm{d}_1{\mathcal H}[0,0,0]: H_\mathrm{per}^s(\Sigma) \rightarrow H_\mathrm{per}^{s-2} \times H_\mathrm{per}^{s-1/2} \times H_\mathrm{per}^{s-3/2}(\Gamma)$
is an isomorphism. The stated result now follows from the analytic implicit-function theorem.
\end{proof}

The transformation \eqref{eq:TT below} converts the formulae
$$G(\eta,\Phi)=\mu(|\grad(\phi+y)|)(\phi_{y}-\eta_{x}\phi_{x}-\eta_{z}\phi_{z})\big|_{y=\eta}, \qquad
H(\eta,\Phi)=\phi_y\big|_{y=\eta}$$
into
\begin{equation}
G(\eta,\Phi) = \mu^\dag (u_y-F_3(\eta,u))\big|_{y=0}, \qquad H(\eta,\Phi)=\left.\frac{u_y}{1+\beta_0\eta}\right|_{y=0},
\label{eq: Transformed G,W}
\end{equation}
where $u=u(\eta,\Phi)$ is the unique solution to \eqref{eq: Laplace for G-unten a. t.}--\eqref{eq: Bc for G-unten a. t.}.
\begin{theorem}\label{def: G unten}
Suppose that $s>5/2$. The formulae \eqref{eq: Transformed G,W} define analytic functions $G$,
$H: V \rightarrow H_\mathrm{per}^{s-3/2}(\Gamma)$.
\end{theorem}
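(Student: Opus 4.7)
The plan is to express both $G(\eta,\Phi)$ and $H(\eta,\Phi)$ as compositions of analytic maps, taking as the starting point the analytic dependence of $u$ on $(\eta,\Phi)$ provided by Theorem~\ref{thm: uanal}. Differentiation is bounded linear from $H^s_\mathrm{per}(\Sigma)$ into $H^{s-1}_\mathrm{per}(\Sigma)$, and the trace operator at $y=0$ is bounded linear from $H^{s-1}_\mathrm{per}(\Sigma)$ into $H^{s-3/2}_\mathrm{per}(\Gamma)$ (valid since $s-1>1/2$). Composing these with $(\eta,\Phi) \mapsto u(\eta,\Phi)$ shows that each of the boundary traces $u_x|_{y=0}$, $u_y|_{y=0}$ and $u_z|_{y=0}$ is an analytic map from $V$ into $H^{s-3/2}_\mathrm{per}(\Gamma)$.

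Since $s>5/2$ gives $s-3/2>1$, the space $H^{s-3/2}_\mathrm{per}(\Gamma)$ is a Banach algebra which embeds continuously into $C(\Gamma)$. The map $\eta \mapsto (1+\beta_0\eta)^{-1}$ is therefore analytic on a neighbourhood of $0$ in $H^{s-3/2}_\mathrm{per}(\Gamma)$, via a convergent geometric series. Pointwise products and quotients of the analytic ingredients above thus remain analytic with values in $H^{s-3/2}_\mathrm{per}(\Gamma)$. This immediately yields the analyticity of
$$
H(\eta,\Phi) = \left.\frac{u_y}{1+\beta_0\eta}\right|_{y=0}
$$
and of $F_3(\eta,u)|_{y=0}$, which involves only these ingredients together with $\eta_x$ and $\eta_z$ (which lie in $H^{s-1}_\mathrm{per}(\Gamma) \hookrightarrow H^{s-3/2}_\mathrm{per}(\Gamma)$).

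The main obstacle is the treatment of $\mu^\dag$, which is a nonlinear composition of the analytic scalar function $\mu$ with an argument that itself depends analytically on $(\eta,\Phi)$ with values in $H^{s-3/2}_\mathrm{per}(\Gamma)$. At $(\eta,\Phi)=(0,0)$ this argument equals the constant function $1$, so $\mu$ is evaluated in a neighbourhood of $1$ where it is analytic by hypothesis. The fact I would invoke is that the substitution operator $\zeta \mapsto \mu(\zeta)$ is analytic from a small neighbourhood of $1$ in $H^{s-3/2}_\mathrm{per}(\Gamma)$ into itself. This follows by expanding $\mu$ in a Maclaurin series about $1$ and using the Banach algebra structure of $H^{s-3/2}_\mathrm{per}(\Gamma)$ together with the embedding into $C(\Gamma)$ to obtain a positive radius of convergence for the resulting series in operator form.

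Multiplying the analytic ingredients together in the Banach algebra $H^{s-3/2}_\mathrm{per}(\Gamma)$ then delivers the analyticity of
$$
G(\eta,\Phi) = \mu^\dag\big|_{y=0}\bigl(u_y|_{y=0} - F_3(\eta,u)|_{y=0}\bigr),
$$
completing the argument. Apart from the substitution operator induced by $\mu$, every step is a routine composition, trace, or multiplication in a Banach algebra; the nonlinear magnetisation law is where the real care is needed.
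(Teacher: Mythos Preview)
Your argument is correct and is precisely the reasoning the paper has in mind: the theorem is stated there without proof, immediately after Theorem~\ref{thm: uanal}, and is intended as a direct consequence of the analyticity of $(\eta,\Phi)\mapsto u(\eta,\Phi)$ together with the fact that differentiation, the trace at $y=0$, multiplication in the Banach algebra $H^{s-3/2}_{\mathrm{per}}(\Gamma)$, and the substitution operator induced by $\mu$ near $1$ all preserve analyticity. You have simply made these implicit steps explicit.
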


To compute the Taylor-series representations of $u$ and $G$ we begin with the function\linebreak
$\nu: (H^{s-1}_{\mathrm{per}}(\Sigma))^{3}\to H^{s-1}_{\mathrm{per}}(\Sigma)$ defined by
$$\nu(T)=\mu(|T+(0,1,0)^\mathrm{T}|).$$
Observing that $\nu$ is analytic at the origin, we write its Taylor series as
\begin{align}\label{eq: nu as series}
\nu(T)
=
\sum_{j=0}^{\infty}
\nu^{j}(\{T\}^{(j)}),
\end{align}
where $\nu^{j}\in\mathcal{L}^{j}_{\mathrm{s}}((H^{s-1}_{\mathrm{per}}(\Sigma))^{3},H^{s-1}_{\mathrm{per}}(\Sigma))$ is given by
$$
\nu^{j}(T_1,\ldots,T_j) = \frac{1}{j!}\mathrm{d}^j\nu[0](T_1,\ldots,T_j)
$$
and may be computed explicitly from $\mu$ (note in particular that $\nu^0=\mu_1$).
The functions\linebreak
 $u^n \in\mathcal{L}^{n}_{\mathrm{s}}(H^{s}_{\mathrm{per}}(\Gamma)\times H^{s-1/2}_{\mathrm{per}}(\Gamma),H^{s}_{\mathrm{per}}(\Sigma))$  
(with $u^0=0$) in the corresponding series
\begin{equation}\label{eq: u as series}
u(\eta,\Phi)
=
\sum^{\infty}_{n=0}
u^{n}
\big(\{(\eta,\Phi)\}^{(n)}\big)
\end{equation}  
may be computed recursively by substituting the \emph{Ansatz} \eqref{eq: nu as series}, \eqref{eq: u as series}
into equations \eqref{eq: Laplace for G-unten a. t.}--\eqref{eq: Bc for G-unten a. t.}.
Consistently abbreviating $m^{n}\big(\{(\eta,\Phi)\}^{(n)}\big)$ to $m^{n}$ for notational simplicity, one finds after a lengthy but straightforward calculation that  
\begin{align*}
\divergence(L\,\grad\,u^{1})&=0, & \divergence(L\,\grad\,u^{n}-F^{n})& =0, \qquad\qquad -\tfrac{1}{\beta_0}<y<0, \\
u^{1}-\Phi&=0, & u^{n} & = 0, \qquad\qquad y=0, \\
u^{1}_{y}&=0, & (L\,\grad\,u^{n}-F^{n})\cdot(0,1,0)^{\mathrm{T}}&=0, \qquad\qquad y=-\tfrac{1}{\beta_0}
\end{align*}
for $n \geq 2$,  where 
 \begin{align*}
L
&=
\begin{pmatrix}
1&0&0 \\
0&S_1^{-2}&0 \\
0&0&1
\end{pmatrix},
\\
\mu_1F^{n}
&=
\mu_1
\!\left(\!
(F^{n}_{1},F^{n}_{3},F^{n}_{2})^{\mathrm{T}}
\!-
\frac{\dot{\mu}_1}{\mu_1} \sum_{j=1}^{n}\left(-\beta_0\eta\right)^{j}u^{n-j}_{y}(0,1,0)^{\mathrm{T}}
\!\!\right)\nonumber
\\
&\qquad\mbox{}-
\sum_{j=0}^{n}
\nu^{1}(T^{j})(\grad\, u^{n-j}-(F^{n-j}_{1},F^{n-j}_{3},F^{n-j}_{2})^{\mathrm{T}})
\nonumber
\\
&\qquad\mbox{}
-
R^{n}(0,1,0)^{\mathrm{T}}
-\sum_{j=0}^{n}
R^{j}(\grad\, u^{n-j}-(F^{n-j}_{1},F^{n-j}_{3},F^{n-j}_{2})^{\mathrm{T}})
\end{align*}
and\pagebreak
\begin{align*}
 F^{n}_{1}& =-\beta_0\eta u^{n-1}_{x} +(1+ \beta_0 y)\eta_{x} u^{n-1}_{y},\qquad
 F^{n}_{2}=-\beta_0\eta u^{n-1}_{z} +(1+ \beta_0 y)\eta_{z} u^{n-1}_{y},\\
 F^{n}_{3}& =\beta_0\eta \sum^{n-1}_{j=0}
 \left(-\beta_0\eta\right)^{j}u^{n-1-j}_{y}
+(1+\beta_0 y)(\eta_{x} u^{n-1}_{x}+ \eta_{z} u^{n-1}_{z})
\nonumber\\
&\qquad\mbox{}
-(1+\beta_0 y)^{2}
(\eta^{2}_{x}+\eta^{2}_{z})
\sum^{n-2}_{j=0}
 \left(-\beta_0\eta\right)^{j}u^{n-2-j}_{y},
\\
T^{n}
&=
\sum_{j=0}^{n}\left(-\beta_0 \eta\right)^{j}
\left(
\grad\, u^{{n-j}}-(F^{{n-j}}_{1},0,F^{{n-j}}_{2})^{\mathrm{T}}
\right)
,
\qquad
R^{n}
=\hspace{-4mm}
\sum_{
\substack{2\leq j\leq n, \\ 
\substack{
h_{1}+\ldots+h_{j}=n}}
}\hspace{-4mm}
\nu^{j}
\!(
T^{h_{1}},\dots,T^{h_{j}}
)
.
\end{align*}

The Taylor-series representations of $G$ and $H$ are thus given by  
$$G(\eta,\Phi)
=
\sum^{\infty}_{n=0}
G_{n}
\big(\{(\eta,\Phi)\}^{(n)}\big),
\qquad
H(\eta,\Phi)
=
\sum^{\infty}_{n=0}
H_{n}
\big(\{(\eta,\Phi)\}^{(n)}\big),
$$
where 
$$
G_n
=
\left.
\mu_1
I^n
+\sum_{j=0}^n \nu^1(T^j)I^{n-j}+\sum_{j=0}^n R^jI^{n-j}
\right|_{y=0} , \qquad
H_n
=
\sum_{j=0}^{n}
(-\beta_0\eta)^ju^{n-j}_y\Bigg|_{y=0} ,
$$
and 
\begin{align*}
I^{n}
&= 
\sum_{j=0}^{n}\left(-\beta_0 \eta
\right)^{j} u_{y}^{n-j}
+\sum_{j=0}^{n-2}\left(-\beta_0\eta
\right)^{j} 
(\eta^{2}_{x}+\eta^{2}_{z})
u_{y}^{n-j-2}
-(\eta_{x} u^{n-1}_{x}+ \eta_{z} u^{n-1}_{z}).
\end{align*} 
For later use we record the formulae
\begin{align*}
G_{1}
&=
\mu_1
u_{y}^{1}
\big|_{y=0},
\\
G_{2}
&=
\mu_1
\left(
\sum_{j=0}^{2}
\left(-\beta_0\eta
\right)^{j} 
u_{y}^{2-j}
-(\eta_{x} u^{1}_{x}+ \eta_{z} u^{1}_{z})
\right)
+
\dot{\mu}_1
(u_{y}^{1})^{2}
\Bigg|_{y=0},
\\
G_{3}
&=
\mu_1
\left(
\sum_{j=0}^{3}
\left(-\beta_0\eta
\right)^{j} u_{y}^{3-j}
+ 
(\eta^{2}_{x}+\eta^{2}_{z})
u_{y}^{1}
-
(\eta_{x} u^{2}_{x}+ \eta_{z} u^{2}_{z})
\right)
\\
&\qquad\mbox{}
+
\dot{\mu}_1
\left(
2\sum_{j=0}^{2}
\left(-\beta_0\eta
\right)^{j} 
u_{y}^{2-j}
-(\eta_{x} u^{1}_{x}+ \eta_{z} u^{1}_{z})
\right)\!\! u^{1}_{y}
+
\tfrac{1}{2}\ddot{\mu}_1
(u_{y}^{1})^{3}
+\tfrac{1}{2}\dot{\mu}_1\big((u_x^1)^2+(u_z^1)^2\big)u_y^1
\Bigg|_{y=0},
\end{align*}
where $\ddot{\mu}_1=\ddot{\mu}(1)$, and
$$
H_{1}=u^{1}_{y}\big|_{y=0},
\qquad
H_{2}=\sum_{j=0}^{2}\left(-\beta_0\eta\right)^{j}u^{2-j}_{y}\Bigg|_{y=0},
\qquad
H_{3}=\sum_{j=0}^{3}\left(-\beta_0\eta\right)^{j}u^{3-j}_{y}\Bigg|_{y=0}
$$
for the first few terms in these series.

The boundary-value problem \eqref{eq: Laplace for G-oben}--\eqref{eq: Bc for G-oben} is handled in a similar fashion. The change of variable
$$
\tilde{y}
=\frac{y-\eta}{1-\beta_0\eta}, \qquad
u^\prime(x,\tilde{y},z) =\phi^\prime(x,y,z)
$$
transforms the variable domain $\Omega_\mathrm{per}^\prime$ into the fixed domain   
$\Sigma^\prime=\left\{{(x,y,z):(x,z)\in\Gamma, y \in(0,\tfrac{1}{\beta_0})}\right\}$
and \eqref{eq: Laplace for G-oben}--\eqref{eq: Bc for G-oben} into
\begin{align}
\textnormal{div}\, \big(\grad u^\prime-(F^\prime_1(\eta,u^\prime),F^\prime_3(\eta,u^\prime),F^\prime_2(\eta,u^\prime))^\mathrm{T}\big)
&
=0,
\qquad&&
0<y<\tfrac{1}{\beta_0},
\label{eq: Laplace for G-oben a. t.}
\\
u^\prime-\Phi^\prime
&
=0,
&&y=0
,
\label{eq: Bc for G-oben a. t.Stern}
\\
u^\prime_y
&
=0,
&&y=\tfrac{1}{\beta_0}
,
\label{eq: Bc for G-oben a. t.}
\end{align}
where
$$
 F^\prime_{1}(\eta, u^\prime) =\eta u^\prime_{x} +(1-\beta_0 y)\eta_{x} u^\prime_{y},\qquad
 F^\prime_{2}(\eta, u^\prime)=\eta u^\prime_{z} +(1-\beta_0 y)\eta_{z} u^\prime_{y},
$$
$$
 F^\prime_{3}(\eta, u^\prime) =-\frac{\beta_0\eta u^\prime_{y}}{1-\beta_0\eta}
+(1-\beta_0 y)(\eta_{x} u^\prime_{x}+ \eta_{z} u^\prime_{z})
-\frac{(1-\beta_0y)^{2}}{1-\beta_0\eta}
(\eta^{2}_{x}+\eta^{2}_{z})u^\prime_{y}
$$
(we have again dropped the tildes for notational simplicity). The formulae
$$G^\prime(\eta,\Phi^\prime) =-(\phi^\prime_{y}-\eta_{x}\phi^\prime_{x}-\eta_{z}\phi^\prime_{z})\big|_{y=\eta}, \qquad
H^\prime(\eta,\Phi^\prime)=\phi^\prime_y\big|_{y=\eta}$$
are converted into
\begin{equation}
G^\prime(\eta,\Phi^\prime) = -u_y^\prime+F_3^\prime(\eta,u^\prime)\big|_{y=0}, \qquad
H^\prime(\eta,\Phi^\prime) = \left.\frac{u_y^\prime}{1-\beta_0\eta}\right|_{y=0}, \label{eq: Transformed G,W prime}
\end{equation}
where $u^\prime=u^\prime(\eta,\Phi^\prime)$ is the unique solution to \eqref{eq: Laplace for G-oben a. t.}--\eqref{eq: Bc for G-oben a. t.}.

\begin{theorem}\label{def: G oben}
Suppose that $s>5/2$.\vspace{-\baselineskip}
\begin{itemize}
\item[(i)]
There exist open neighbourhoods $V^\prime$ and $U^\prime$ of the origin in respectively
$H_\mathrm{per}^s(\Gamma) \times H_\mathrm{per}^{s-1/2}(\Gamma)$ and
$H_\mathrm{per}^s(\Sigma^\prime)$ such that
the boundary-value problem \eqref{eq: Laplace for G-oben a. t.}--\eqref{eq: Bc for G-oben a. t.}
has a unique solution\linebreak
 $u^\prime=u^\prime(\eta,\Phi^\prime)$ in $U^\prime$ for each $(\eta,\Phi^\prime) \in V^\prime$. Furthermore
$u^\prime(\eta,\Phi^\prime)$ depends analytically upon $\eta$ and $\Phi^\prime$.
\item[(ii)]
The formulae \eqref{eq: Transformed G,W prime}
define analytic functions $G^\prime$, $H^\prime: H^{s}_{\mathrm{per}}(\Gamma)\times H^{s-1/2}_{\mathrm{per}}(\Gamma) \rightarrow
 H^{s-3/2}_{\mathrm{per}}(\Gamma)$.
\end{itemize}
\end{theorem}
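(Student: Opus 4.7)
The plan is to mirror the strategy used for Theorems \ref{thm: uanal} and \ref{def: G unten}, exploiting the fact that the upper problem is genuinely linear (the nonlinearity of $\mu$ plays no role in $\Omega^\prime$) and therefore strictly simpler. For part (i), I would write the left-hand sides of \eqref{eq: Laplace for G-oben a. t.}--\eqref{eq: Bc for G-oben a. t.} as
$$\mathcal{H}^\prime(u^\prime,\eta,\Phi^\prime)=0,$$
and observe that because $F_1^\prime$, $F_2^\prime$, $F_3^\prime$ are polynomials in $\eta$, $1/(1-\beta_0\eta)$ and the first derivatives of $u^\prime$ (together with the single factor $(1-\beta_0 y)$), the map
$$\mathcal{H}^\prime: H^s_\mathrm{per}(\Sigma^\prime)\times H^s_\mathrm{per}(\Gamma)\times H^{s-1/2}_\mathrm{per}(\Gamma) \longrightarrow H^{s-2}_\mathrm{per}(\Sigma^\prime)\times H^{s-1/2}_\mathrm{per}(\Gamma)\times H^{s-3/2}_\mathrm{per}(\Gamma)$$
is analytic at the origin, using the Banach-algebra property of $H^{s-1}_\mathrm{per}(\Sigma^\prime)$ (valid since $s>5/2$) to handle the products, and analyticity of $\eta\mapsto 1/(1-\beta_0\eta)$ on a neighbourhood of $0$.

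Next I would compute
$$\mathrm{d}_1 \mathcal{H}^\prime[0,0,0](u^\prime)=\bigl(u^\prime_{xx}+u^\prime_{yy}+u^\prime_{zz},\; u^\prime|_{y=0},\; u^\prime_y|_{y=1/\beta_0}\bigr),$$
which is the standard harmonic extension operator on the strip $\Sigma^\prime$ with a Dirichlet condition at the bottom and a Neumann condition at the top, under periodic boundary conditions in $(x,z)$. Elliptic regularity plus a Fourier-series argument (expanding in $\mathrm{e}^{\mathrm{i}\mathbf{k}\cdot\mathbf{x}}$ and solving the constant-coefficient ODE in $y$, for which the mixed Dirichlet--Neumann boundary conditions determine the two constants uniquely and produce the correct Sobolev bounds, including for $\mathbf{k}=\mathbf{0}$ where the solution is simply $u^\prime\equiv\Phi^\prime_0$) shows that $\mathrm{d}_1 \mathcal{H}^\prime[0,0,0]$ is an isomorphism between the indicated spaces. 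The analytic implicit-function theorem then yields the neighbourhoods $V^\prime$, $U^\prime$ and the analytic map $(\eta,\Phi^\prime)\mapsto u^\prime(\eta,\Phi^\prime)$ asserted in (i).

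For part (ii), once $u^\prime$ is known to depend analytically on $(\eta,\Phi^\prime)\in V^\prime$ with values in $H^s_\mathrm{per}(\Sigma^\prime)$, the trace theorem gives that $u^\prime_y|_{y=0}$ and $u^\prime|_{y=0}$ lie in $H^{s-3/2}_\mathrm{per}(\Gamma)$ and $H^{s-1/2}_\mathrm{per}(\Gamma)$ respectively and depend analytically on $(\eta,\Phi^\prime)$. Substituting into \eqref{eq: Transformed G,W prime} and using again the Banach-algebra property of $H^{s-3/2}_\mathrm{per}(\Gamma)$ (since $s-3/2>1$) together with analyticity of $\eta\mapsto 1/(1-\beta_0\eta)$, one obtains that $G^\prime$ and $H^\prime$ are analytic maps into $H^{s-3/2}_\mathrm{per}(\Gamma)$ on a neighbourhood of the origin in $H^s_\mathrm{per}(\Gamma)\times H^{s-1/2}_\mathrm{per}(\Gamma)$, in the sense of Theorem \ref{thm: NEins}.

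The main obstacle is the isomorphism claim for $\mathrm{d}_1\mathcal{H}^\prime[0,0,0]$, and in particular handling the zeroth Fourier mode, where the Laplace operator degenerates; however, the Dirichlet boundary condition at $y=0$ pins down the single constant of integration, so the argument goes through verbatim as in the proof of Theorem \ref{thm: uanal}. The remaining steps are essentially bookkeeping, with a direct analogue of the recursive Taylor computation presented for $G$ and $H$ producing the corresponding expansions of $G^\prime$ and $H^\prime$ if they are needed for later sections.
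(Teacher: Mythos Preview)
Your proposal is correct and follows exactly the approach the paper intends: the paper does not give a separate proof of this theorem but introduces it with ``The boundary-value problem \ldots\ is handled in a similar fashion'', meaning the argument of Theorems \ref{thm: uanal} and \ref{def: G unten} is to be repeated verbatim with the simpler linearisation $\mathrm{d}_1\mathcal{H}^\prime[0,0,0](u^\prime)=(\Delta u^\prime,\,u^\prime|_{y=0},\,u^\prime_y|_{y=1/\beta_0})$. Your write-up supplies precisely those details.
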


The functions $u^{\prime n}
\in\mathcal{L}^{n}_{\mathrm{s}}(H^{s}_{\mathrm{per}}(\Gamma)\times H^{s-1/2}_{\mathrm{per}}(\Gamma),H^{s}_{\mathrm{per}}(\Sigma^{\prime}))
$ 
(with $u^{\prime 0}=0$) in the Taylor series
$$
u'(\eta,\Phi^{\prime})
=
\sum^{\infty}_{n=1}
u^{\prime n}\big(\{(\eta,\Phi^{\prime})\}^{(n)}\big) 
$$
are computed recursively by substituting this \emph{Ansatz} into equations \eqref{eq: Laplace for G-oben a. t.}--\eqref{eq: Bc for G-oben a. t.};
one finds that
\begin{align*}
\divergence(\grad\,u^{\prime 1})&=0, & \divergence(\grad\,u^{\prime n}-(F^{\prime n}_{1},F^{\prime n}_{3},F^{\prime n}_{2})^{\mathrm{T}})& =0, \qquad\qquad 0<y<\tfrac{1}{\beta_0}, \\
u^{\prime 1}-\Phi^\prime&=0, & u^{\prime n} & = 0, \qquad\qquad y=0, \\
u^{\prime 1}_{y}&=0, & u^{\prime n}_{y}&=0, \qquad\qquad y=\tfrac{1}{\beta_0}
\end{align*}
for $n\geq 2$, where
$$
 F^{\prime n}_{1}=\eta u^{\prime n-1}_{x} +(1-\beta_0 y)\eta_{x} u^{\prime n-1}_{y},\qquad
 F^{\prime n}_{2} =\eta u^{\prime n-1}_{z} +(1-\beta_0 y)\eta_{z} u^{\prime n-1}_{y},
$$
\begin{align*}
F^{\prime n}_{3} =&-\beta_0\eta \sum^{n-1}_{j=0}
\left(\beta_0\eta\right)^{j}u^{\prime n-1-j}_{y}
+(1-\beta_0 y)(\eta_{x} u^{\prime n-1}_{x}+ \eta_{z} u^{\prime n-1}_{z}) \\
& \qquad\mbox{}
-(1-\beta_0y)^2(\eta^{2}_{x}+\eta^{2}_{z})\sum^{n-2}_{j=0}(\beta_0\eta)^ju^{\prime n-2-j}_{y}.
\end{align*}
The Taylor-series representations of $G^\prime$ and $H^\prime$ are given by 
$$
G^\prime(\eta,\Phi^{\prime})
=
\sum_{n=1}^{\infty}
G^\prime_{n}
\big(\{(\eta,\Phi^{\prime})\}^{(n)}\big),
\qquad
H^\prime(\eta,\Phi^{\prime})
=
\sum_{n=1}^{\infty}
H^\prime_{n}
\big(\{(\eta,\Phi^{\prime})\}^{(n)}\big)
$$
with 
\begin{align*}
G^\prime_{n}
&= 
\left.-
\sum_{j=0}^{n-1}\left(
\beta_0\eta
\right)^{j} u_{y}^{\prime n-j}
-\sum_{j=0}^{n-3}\left(
\beta_0\eta
\right)^{j} 
(\eta^{2}_{x}+\eta^{2}_{z})
u_{y}^{\prime n-j-2}
+(\eta_{x} u^{\prime n-1}_{x}+ \eta_{z} u^{\prime n-1}_{z})
\right|_{y=0}, \\
H^\prime_{n}
&= 
\sum_{j=0}^{n-1}\left(
\beta_0\eta
\right)^{j} u_{y}^{\prime n-j}
\Bigg|_{y=0},
\end{align*}
and in particular we find that
$$
G^\prime_{1}= -u_{y}^{\prime1}\big|_{y=0},
\qquad
G^\prime_{2}= -\left.\sum_{j=0}^{1}\left(\beta_0\eta\right)^{j} u_{y}^{\prime2-j}
+(\eta_{x} u^{\prime1}_{x}+ \eta_{z} u^{\prime1}_{z})\right|_{y=0},
$$
$$
G^\prime_{3}= -\left.\sum_{j=0}^{2}\left(\beta_0\eta\right)^{j} u_{y}^{\prime3-j}
-(\eta^{2}_{x}+\eta^{2}_{z})u_{y}^{\prime1}+(\eta_{x} u^{\prime2}_{x}+ \eta_{z} u^{\prime2}_{z})\right|_{y=0} .
$$
and
$$
H^\prime_{1}=u_{y}^{\prime1}\big|_{y=0},
\qquad
H^\prime_{2}= \left.\sum_{j=0}^{1}\left(\beta_0\eta\right)^{j} u_{y}^{\prime2-j}\right|_{y=0},
\qquad
H^\prime_{3} = \left.\sum_{j=0}^{2}\left(\beta_0\eta\right)^{j} u_{y}^{\prime3-j}\right|_{y=0}.
$$

\section{Existence theory}\label{ch: CRT}

Next we introduce the Crandall-Rabinowitz theorem (cf. Buffoni and Toland \cite[Theorem 8.3.1]{BuffoniToland}), 
an application of which yields  a local bifurcation point of the equation
\begin{equation}
{\mathcal G}(\gamma,(\eta,\Phi^\prime,\Phi))=0, \label{eq: Nfinal again}
\end{equation}
where the components of ${\mathcal G}: {\mathbb R} \times V_0 \rightarrow Y_0$ are given by
the left-hand sides of \eqref{eq: NNBC1}--\eqref{eq: NNBC3}. 

\begin{theorem}[Crandall-Rabinowitz theorem]\label{thm: CRT}\label{thm: CRT}
Let 
$X$ and $Y$ be Banach spaces, 
$V$ be an open neighbourhood of the origin in $X$   
and 
${\mathcal F}:\mathbb{R}\times V\rightarrow Y$ be an analytic function 
with ${\mathcal F}(\lambda, v) =0$ for all $\lambda \in \mathbb{R}.$ 
Suppose also that 
\hfill
\vspace{-\baselineskip}
\begin{enumerate}
\item
 $L:=\mathrm{d}_{2}{\mathcal F}[\lambda_{0}, 0]: X \rightarrow Y$ is a Fredholm operator of index zero, 
\item
$\ker L=\langle   v_{0} \rangle$ for some $v_{0} \in X,$  
\item
the transversality condition
$P(\mathrm{d}_{1}\mathrm{d}_{2}{\mathcal F}[\lambda_{{{0}}},0](1,v_{0}))\neq 0$ 
holds, 
where 
$P: Y\rightarrow Y$ is a projection 
with $\im L =\ker P.$
\hfill
\vspace{-\baselineskip}
\end{enumerate}
The point $(\lambda_{{{0}}},0)$ is a local bifurcation point, 
that is
there exist $\varepsilon >0,$ 
an open neighbourhood $W$ of $(\lambda_{0},0)$ in $\mathbb{R} \times X$
and  analytic functions 
$w:(-\varepsilon,\varepsilon)\rightarrow V,\ \lambda: (-\varepsilon,\varepsilon)\rightarrow \mathbb{R}$ with  
$\lambda(0)=\lambda_{0}, w(0)=v_{0}$ 
such that ${\mathcal F}(\lambda(s),sw(s))=0$ for every $s\in (-\varepsilon,\varepsilon).$ 
Furthermore   
$$
W\cap N=\left\{ (\lambda(s),sw(s)):0<|s|<\varepsilon\right\},
$$
where
$$
N=\left\{(\lambda,v)\in \mathbb{R}\times(V \setminus \left\{ 0 \right\}):{\mathcal F}(\lambda,v)=0 \right\}.
$$
\end{theorem}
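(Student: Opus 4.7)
The plan is to perform a ``blow-up'' reduction and then apply the analytic implicit-function theorem. Since $L$ is Fredholm of index zero with one-dimensional kernel $\langle v_0\rangle$, the cokernel $Y/\im L$ is also one-dimensional; the finite-dimensional subspace $\langle v_0\rangle$ admits a closed complement $X_1$ in $X$ with $X = \langle v_0\rangle \oplus X_1$, and $L|_{X_1}: X_1 \to \im L = \ker P$ is a topological isomorphism. Parametrise candidate bifurcating solutions by writing $v = s(v_0 + \omega)$ with $s\in\mathbb{R}$ and $\omega\in X_1$ and define
\[
\tilde{\mathcal F}(\lambda, s, \omega) := \begin{cases} s^{-1}\,\mathcal F(\lambda, s(v_0+\omega)), & s\ne 0, \\ \mathrm{d}_2 \mathcal F[\lambda,0](v_0+\omega), & s=0. \end{cases}
\]

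The key analytical step is to verify that $\tilde{\mathcal F}$ is analytic on an open neighbourhood of $(\lambda_0, 0, 0)$ in $\mathbb{R}\times\mathbb{R}\times X_1$ with values in $Y$. This rests on the standing hypothesis $\mathcal F(\lambda, 0)=0$ for every $\lambda$: the convergent expansion of $\mathcal F$ about $(\lambda,0)$ has vanishing zeroth-order term in the second argument identically in $\lambda$, so substituting $v = s(v_0+\omega)$ produces a power series in $s$ whose constant coefficient is identically zero, and division by $s$ yields an analytic extension through $s=0$ taking the prescribed value $\mathrm{d}_2\mathcal F[\lambda, 0](v_0+\omega)$. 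With this in hand, $\tilde{\mathcal F}(\lambda_0, 0, 0) = Lv_0 = 0$, and a direct computation gives
\[
\mathrm{d}_{(\lambda,\omega)}\tilde{\mathcal F}[\lambda_0, 0, 0](\mu, \xi) = \mu w_0 + L\xi, \qquad w_0 := \mathrm{d}_1\mathrm{d}_2\mathcal F[\lambda_0, 0](1, v_0).
\]
The map $(\mu,\xi)\mapsto \mu w_0 + L\xi$ from $\mathbb{R}\times X_1$ to $Y$ is an isomorphism: applying $P$ and using the transversality hypothesis $Pw_0\ne 0$ forces $\mu=0$, whence $L\xi=0$ and injectivity of $L|_{X_1}$ gives $\xi=0$; surjectivity follows by splitting $y\in Y$ as $Py+(I-P)y$ and solving the two components separately using $P w_0\ne 0$ and the isomorphism $L|_{X_1}$.

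The analytic implicit-function theorem then supplies $\varepsilon>0$ and unique analytic functions $\lambda:(-\varepsilon,\varepsilon)\to\mathbb{R}$ and $\omega:(-\varepsilon,\varepsilon)\to X_1$ with $\lambda(0)=\lambda_0$, $\omega(0)=0$, satisfying $\tilde{\mathcal F}(\lambda(s), s, \omega(s))=0$ in a neighbourhood of $(\lambda_0, 0, 0)$. Setting $w(s) := v_0 + \omega(s)$ yields $w(0)=v_0$ and $\mathcal F(\lambda(s), sw(s))=0$ for $|s|<\varepsilon$. For the uniqueness clause $W\cap N = \{(\lambda(s), sw(s)):0<|s|<\varepsilon\}$, I would pass to the equivalent Lyapunov--Schmidt formulation: decomposing $v = tv_0 + x$ with $x\in X_1$, the range equation $(I-P)\mathcal F(\lambda, tv_0+x)=0$ determines $x = x(\lambda, t)$ uniquely by the IFT with $x(\lambda, 0)=0$, and the residual scalar equation $P\mathcal F(\lambda, tv_0+x(\lambda,t))=0$ factorises as $t\,\Phi(\lambda,t)=0$ with $\partial_\lambda\Phi(\lambda_0, 0)=Pw_0\ne 0$, so the nontrivial solution set is precisely the graph of the bifurcation curve just constructed. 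The main technical obstacle is establishing the analyticity of $\tilde{\mathcal F}$ across $s=0$; once this removable-singularity issue is handled, everything reduces to the analytic IFT applied to the blown-up problem, together with the Fredholm splitting and the transversality hypothesis.
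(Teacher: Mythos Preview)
The paper does not actually prove this theorem: it is stated as Theorem~3.1 with a reference to Buffoni and Toland \cite[Theorem 8.3.1]{BuffoniToland} and then \emph{applied}, not derived. There is therefore no ``paper's own proof'' to compare against.

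That said, your argument is the standard one and is correct. The blow-up $v=s(v_0+\omega)$ together with the hypothesis $\mathcal F(\lambda,0)=0$ (which is what the paper intends, despite the misprint ``$\mathcal F(\lambda,v)=0$'') makes $\tilde{\mathcal F}$ analytic through $s=0$; your computation of the linearisation $\mathrm{d}_{(\lambda,\omega)}\tilde{\mathcal F}[\lambda_0,0,0](\mu,\xi)=\mu w_0+L\xi$ and the injectivity/surjectivity check via $P$ are exactly right, using that $\mathrm{codim}\,\im L=1$ so $P$ has one-dimensional range. The uniqueness clause via Lyapunov--Schmidt is sketched rather than written out, but the factorisation $t\,\Phi(\lambda,t)=0$ with $\partial_\lambda\Phi(\lambda_0,0)=Pw_0\neq 0$ is the correct mechanism and matches the treatment in Buffoni--Toland. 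Nothing is missing.
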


The first step is to determine the maximal positive  value $\gamma_0$ 
of the parameter $\gamma$ 
for which the kernel of the linear operator 
$L_{0}:=\mathrm{d}_{2}{\mathcal G}[\gamma_0,0]:X_0 \to Y_0$, which is
given by the explicit formula    
\begin{equation}\label{eq: Linear Operator}
L_{0}\begin{pmatrix}\eta \\ \Phi^\prime \\ \Phi \end{pmatrix}
 =\begin{pmatrix}
\Phi^\prime-\Phi
+(\mu_1-1)\eta
\\
G_1^\prime(\eta,\Phi^\prime) + \mu G_1(\eta,\Phi)+\dot{\mu}_1H_1(\eta,\Phi)
\\
\eta_{xx}+\eta_{zz}-\gamma_0\eta-(\mu_1G_1^\prime(\eta,\Phi^\prime)+G_1(\eta,\Phi)+\dot{\mu}_1H_1(\eta,\Phi))
\end{pmatrix}
\end{equation}
with
\begin{align*}
G_1^\prime(\eta,\Phi^\prime)
=
\sum_{\mathbf{k}\in\mathscr{L}^{\star}}
|\mathbf{k}|\tanh  \frac{|\mathbf{k}|}{\beta_0} \Phi^\prime_{_{\mathbf{k}}}\mathrm{e}^{\mathrm{i}\mathbf{k}\cdot\mathbf{x}}
,
\qquad
G_1(\eta,\Phi)
=
\mu_1\sum_{\mathbf{k}\in\mathscr{L}^{\star}}
S_1|\mathbf{k}|\tanh \frac{S_1|\mathbf{k}|}{\beta_0} \Phi_{_{\mathbf{k}}}\mathrm{e}^{\mathrm{i}\mathbf{k}\cdot\mathbf{x}}
\end{align*}
and $G_1(\eta,\Phi)=\mu_1H_1(\eta,\Phi)$,
is non-trivial.  Writing  $v \in X_0$ 
as 
\begin{equation}
v(\mathbf{x})=\sum_{\mathbf{k}\in\mathscr{L}^{\star}}
\mathbf{v}_{_{\mathbf{k}}}\mathrm{e}^{\mathrm{i}\mathbf{k}\cdot\mathbf{x}} \label{eq: formula for v}
\end{equation}
with   
$\mathbf{v}_{_{\mathbf{k}}}=(\eta_{_{\mathbf{k}}},\Phi^\prime_{_\mathbf{k}},\Phi_{_\mathbf{k}})^{\mathrm{T}}$
and ${\bf v}_{_{-\mathbf{k}}}=\bar{\bf v}_{_{\mathbf{k}}}$, we find that    
\begin{align}\label{eq: linear Operator series}
L_{0}v
&=
\sum_{\mathbf{k}\in\mathscr{L}^{\star}}
L_{0}(|\mathbf{k}|)
\mathbf{v}_{_{\mathbf{k}}}\mathrm{e}^{\mathrm{i}\mathbf{k}\cdot\mathbf{x}},
\end{align}
where 
\begin{align*}
L_{0}(|\mathbf{k}|)
=
\begin{pmatrix}
\mu_1-1&1&-1
\\
0&|\mathbf{k}|\tanh \dfrac{ |\mathbf{k}|}{\beta_0}&\mu_1S_1^{-1}  |\mathbf{k}| \tanh  \dfrac{S_1|\mathbf{k}|}{\beta_0}
\\
-|\mathbf{k}|^{2}-\gamma_0
&
-\mu_1|\mathbf{k}|\tanh \dfrac{ |\mathbf{k}|}{\beta_0}
&
-\mu_1S_1^{-1}  |\mathbf{k}| \tanh  \dfrac{S_1|\mathbf{k}|}{\beta_0}
\end{pmatrix}
\end{align*}
for $\mathbf{k} \neq {\bf 0}$ and  
\begin{align*}
L_{0}(0)
&=
\begin{pmatrix}
\mu_1-1&1&-1
\\
0&0&0
\\
-\gamma_0
&0&0
\end{pmatrix}
\end{align*}
(where we have identified the subspace 
$\{ 
(\eta_{_{\mathbf{0}}},\Phi^\prime_{_{\mathbf{0}}},\Phi_{_{\mathbf{0}}})^{\mathrm{T}}
:\Phi^\prime_{_{\mathbf{0}}}=0\}$ of $\mathbb{R}^{3}$ with $\mathbb{R}^{2}$).

From this observation it follows that 
$\ker L_{0}$ is non-trivial if 
\begin{align*}
 \det L_{0}({{|\mathbf{k}|}})
&=\mu_1(\mu_1-1)^2S_1^{-1}|\mathbf{k}|^2 \tanh \frac{|\mathbf{k}|}{\beta_0}\tanh \frac{S_1|\mathbf{k}|}{\beta_0} \\
& \qquad\mbox{}-(|\mathbf{k}|^{2}+\gamma_0)\bigg(\mu_1|\mathbf{k}| \tanh\frac{|\mathbf{k}|}{\beta_0} +S_1 |\mathbf{k}| \tanh\frac{S_1|\mathbf{k}|}{\beta_0}\bigg) =0,
\end{align*}
that is
$$\gamma_0=r(|{\bf k}|):= \left(\mu_1(\mu_1-1)^2\bigg(\mu_1|\mathbf{k}| \coth\frac{|\mathbf{k}|}{\beta_0} +S_1 |\mathbf{k}| \coth\frac{S_1|\mathbf{k}|}{\beta_0}\bigg)^{\!\!-1}-1\right)|\mathbf{k}|^2$$
for some $\mathbf{k}\in\mathscr{L}^{\star}\setminus \left\{ \mathbf{0} \right\}$. The function $|\mathbf{k}| \mapsto r(|{\bf k}|)$, which satisfies $r(0)=0$ and $r(|\mathbf{k}|) \rightarrow-\infty$\linebreak
as $|\mathbf{k}| \rightarrow \infty$, takes only negative values for
$\beta_0>\mu_1(\mu_1-1)^2/(\mu_1+1)$, while for\linebreak
$\beta_0<\mu_1(\mu_1-1)^2/(\mu_1+1)$ it has a unique maximum $\omega$ with
$r(\omega)>0$ (see  Figure \ref{pic: disprel}); we choose $\gamma_0=r(\omega)$ and
note the relationships
$$\beta_0=\frac{\mu_1(\mu_1-1)^2}{2\tilde{\omega}}\left( \frac{h(\tilde{\omega})-\tilde{\omega}\dot{h}(\tilde{\omega})}{h(\tilde{\omega})^2}\right),
\qquad
\gamma_0=\left(\frac{\mu_1(\mu_1-1)^2}{\omega h(\tilde{\omega})}-1\right)\omega^2,$$
where $\tilde{\omega}=\omega/\beta_0$ and $h(\tilde{\omega})=\mu_1\coth \tilde{\omega}+S_1\coth S_1\tilde{\omega}$.

\begin{figure}[H]
\centering
\includegraphics[scale=0.62]{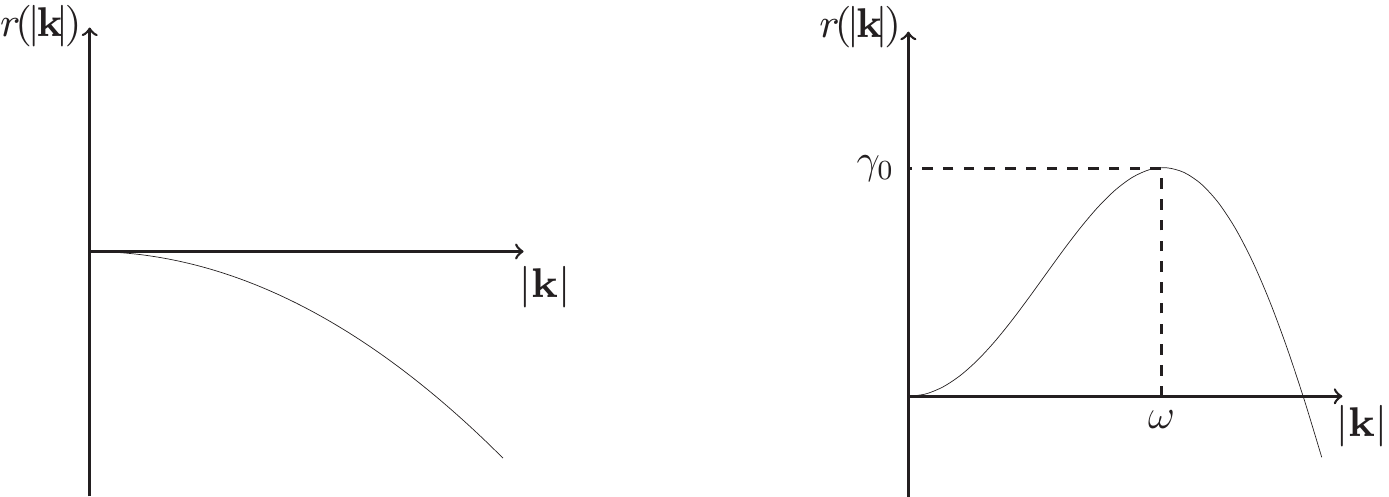}
\caption{\label{pic: disprel}
The graph of the function $|\mathbf{k}| \mapsto r(|\mathbf{k}|)$ for $\beta_0>\mu_1(\mu_1\!-\!1)^2/(\mu_1\!+\!1)$ (left) and $\beta_0<\mu_1(\mu_1\!-\!1)^2/(\mu_1\!+\!1)$ (right). 
} 
\end{figure}

Noting that
$\ker L_{0}(\omega) =\langle  \mathbf{v} \rangle,$ 
where  
\begin{equation}
\mathbf{v}=\left(\begin{array}{c} 
\cfrac{1}{\mu_1-1}\left(\mu_1S_1^{-1}\tanh S_1\tilde{\omega}\coth \tilde{\omega}+1\right) \\
-\mu_1S_1^{-1}\tanh S_1\tilde{\omega}\coth \tilde{\omega} \\
1\\
\end{array}
\right),
\label{eq: bold v}
\end{equation}
we find that  
$$
\ker L_{0}
=
\langle  \left\{ \mathbf{v}\sin(\mathbf{k}\cdot\mathbf{x}),\mathbf{v}\cos(\mathbf{k}\cdot\mathbf{x}): \mathbf{k}\in\mathscr{L}^{\star} 
\ \mathrm{with} \ |\mathbf{k}|=\omega \right\} \rangle
$$
(see Figure \ref{fig: length}).
\begin{enumerate}\vspace{-\baselineskip}
\item
For rolls the dual lattice $\mathscr{L}^{\star}$ is generated by $\mathbf{k}=(\omega,0),$ so that $|\mathbf{k}|,|-\mathbf{k}|=\omega$ 
and hence $\dim\ker L_{0}=2$.
\item
For rectangles the dual lattice $\mathscr{L}^{\star}$ is generated by $\mathbf{k}_{1}=(\omega,0)$ and $\mathbf{k}_{2}=(0,\omega),$ 
so that $|\mathbf{k}_{1}|,|-\mathbf{k}_{1}|,|\mathbf{k}_{2}|,|-\mathbf{k}_{2}|=\omega$ 
and hence $\dim\ker L_{0}=4$.
\item
For hexagons the dual lattice $\mathscr{L}^{\star}$ 
is generated by $\mathbf{k}_{1}=(\omega,0)$  and $\mathbf{k}_{2}=\omega(\frac{1}{2},\frac{\sqrt{3}}{2})$,
so that $|\mathbf{k}_{1}|,|-\mathbf{k}_{1}|,|\mathbf{k}_{2}|,|-\mathbf{k}_{2}|, |\mathbf{k}_{3}|,|-\mathbf{k}_{3}|=\omega,$ 
where $\mathbf{k}_{3}=\mathbf{k}_{2}-\mathbf{k}_{1},$ 
and hence $\dim \ker L_{0}=6$.
\end{enumerate}
\begin{figure}[H] 
\centering
\includegraphics[scale=0.9]{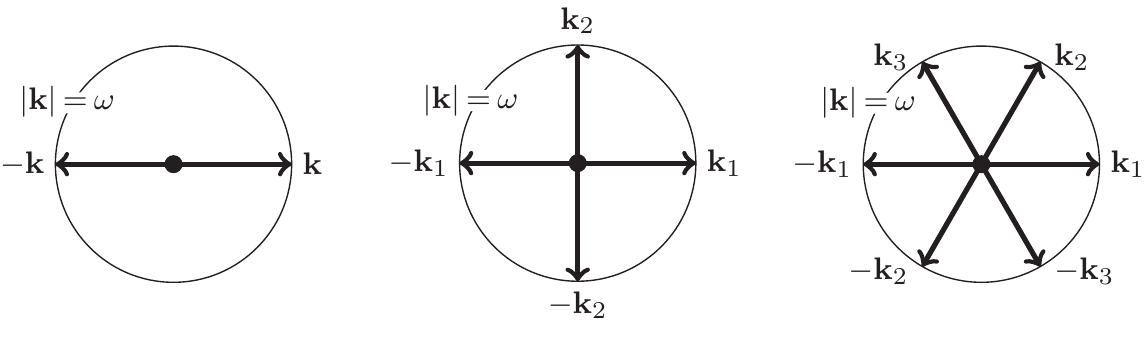}
\caption{\label{fig: length}
The vectors generated by $\mathscr{L}^{\star}$ with length $\omega$ in the case of  
rolls (left), rectangles (centre) and hexagons (right).}
\end{figure}

Define the projection $P_{\omega}:X_0\to X_0, Y_0\to Y_0$ 
by 
\begin{align*}
P_{\omega}
v(\mathbf{x})
&=
\sum_{\substack{\mathbf{k} \in\mathscr{L}^{\star}, \\ |\mathbf{k}|=\omega }}
\mathbf{v}_{_{\mathbf{k}}}\mathrm{e}^{\mathrm{i}\mathbf{k}\cdot\mathbf{x}},
\end{align*}
where $v$ is given by formula \eqref{eq: formula for v},
so that
$$
X_0=X_\omega \oplus X_{\mathrm{r}},
\qquad
Y_0=Y_\omega \oplus Y_{\mathrm{r}}
$$
with
\begin{align*}
 X_\omega=P_{\omega}[X_0],
\qquad
 X_{\mathrm{r}}=(I-P_{\omega})[X_0],
\qquad
 Y_\omega=P_{\omega}[Y_0],
\qquad
 Y_{\mathrm{r}}=(I-P_{\omega})[Y_0].
\end{align*}
Using \eqref{eq: linear Operator series}, 
one finds that 
$\im L_{0}\big|_{ X_\omega}\subseteq Y_\omega$ 
and 
$\im L_{0}\big|_{ X_{\mathrm{r}}}\subseteq Y_{\mathrm{r}}$  
and we prove 
that $L_{0}:X_0 \to Y_0$ is a Fredholm operator of index zero in two  steps.
\begin{lemma}\label{lem: Lo}
The mapping $L_{0}$ 
is an isomorphism 
$ X_{\mathrm{r}}\to Y_{\mathrm{r}}.$
\end{lemma}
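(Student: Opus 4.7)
The plan is to exploit the Fourier diagonalisation \eqref{eq: linear Operator series}: since each mode is decoupled, it suffices to show that, for every $\mathbf{k}\in\mathscr{L}^{\star}$ with $|\mathbf{k}|\neq\omega$, the matrix $L_{0}(|\mathbf{k}|)$ is invertible, and to verify that the pointwise inverses $L_{0}(|\mathbf{k}|)^{-1}$ define a bounded Fourier multiplier from $Y_{\mathrm{r}}$ to $X_{\mathrm{r}}$.

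First I would treat the \textbf{zero mode} $\mathbf{k}=\mathbf{0}$ separately. Thanks to the zero-mean convention in the second factor of $X_{0}$, only $(\eta_{\mathbf{0}},\Phi_{\mathbf{0}})$ contributes, and the zero-mean condition on the second factor of $Y_{0}$ automatically disposes of the vanishing middle row of $L_{0}(0)$. The remaining $2\times 2$ block $\begin{pmatrix}\mu_{1}-1 & -1\\ -\gamma_{0} & 0\end{pmatrix}$ has determinant $-\gamma_{0}\neq 0$, so $L_{0}(0)$ is invertible on the restricted space.

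For the \textbf{non-zero modes} I would observe that the computation already carried out in the excerpt shows
\[
\det L_{0}(|\mathbf{k}|)=-\bigl(r(|\mathbf{k}|)-\gamma_{0}\bigr)\!\left(\mu_{1}|\mathbf{k}|\tanh\tfrac{|\mathbf{k}|}{\beta_{0}}+S_{1}|\mathbf{k}|\tanh\tfrac{S_{1}|\mathbf{k}|}{\beta_{0}}\right)\!,
\]
so $\det L_{0}(|\mathbf{k}|)=0$ precisely when $|\mathbf{k}|=\omega$ (since $\omega$ is the unique maximiser of $r$ with $r(\omega)=\gamma_{0}$). For $\mathbf{k}\in\mathscr{L}^{\star}\setminus\{\mathbf{0}\}$ with $|\mathbf{k}|\neq\omega$ the matrix is therefore invertible. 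To pass to a Fourier-multiplier estimate I would show the quantitative lower bound
\[
|\det L_{0}(|\mathbf{k}|)|\geq c\,(1+|\mathbf{k}|)^{3},\qquad \mathbf{k}\in\mathscr{L}^{\star}\setminus\{\mathbf{0}\},\;|\mathbf{k}|\neq\omega,
\]
which is immediate for $|\mathbf{k}|$ large (the two displayed factors behave like $|\mathbf{k}|^{2}$ and $|\mathbf{k}|$ respectively, since $\tanh\to 1$) and follows on the remaining discrete, compactly-contained set from the fact that $r(|\mathbf{k}|)-\gamma_{0}$ has only isolated zeros at $|\mathbf{k}|=\omega$.

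The final step is to examine the entries of $L_{0}(|\mathbf{k}|)^{-1}$ obtained via the adjugate. A direct computation of the cofactors, combined with the lower bound on the determinant, gives the asymptotic symbol
\[
L_{0}(|\mathbf{k}|)^{-1}=O\!\begin{pmatrix} |\mathbf{k}|^{-1} & |\mathbf{k}|^{-2} & |\mathbf{k}|^{-2}\\ 1 & |\mathbf{k}|^{-1} & |\mathbf{k}|^{-2}\\ 1 & |\mathbf{k}|^{-1} & |\mathbf{k}|^{-2}\end{pmatrix}
\]
as $|\mathbf{k}|\to\infty$. Reading this against the exponents of $Y_{\mathrm{r}}=H^{s}\times\bar{H}^{s-1}\times H^{s-3/2}$ and $X_{\mathrm{r}}=H^{s+1/2}\times\bar{H}^{s}\times H^{s}$ shows that each row of $L_{0}(|\mathbf{k}|)^{-1}$ maps the components of $Y_{\mathrm{r}}$ into the correct Sobolev class (in fact, with at least a half-derivative to spare in every entry), so $L_{0}^{-1}:Y_{\mathrm{r}}\to X_{\mathrm{r}}$ is bounded. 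Together with the pointwise bijectivity this gives the desired isomorphism.

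The main obstacle is the bookkeeping in the last step: one has to verify that the nine distinct entries of the adjugate matrix do not produce a worse exponent than the rough scaling above (in particular that no hidden cancellation is lost), and that the lower bound on $|\det L_{0}(|\mathbf{k}|)|$ is genuinely uniform on the discrete set $\{|\mathbf{k}|:\mathbf{k}\in\mathscr{L}^{\star}\setminus\{\mathbf{0}\},\,|\mathbf{k}|\neq\omega\}$, the only delicate point being the behaviour near $|\mathbf{k}|=\omega$, which is handled by the discreteness of $\mathscr{L}^{\star}$.
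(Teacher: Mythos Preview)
Your approach is correct and coincides with the paper's: Fourier diagonalisation via \eqref{eq: linear Operator series}, separate treatment of the zero mode, pointwise invertibility of $L_{0}(|\mathbf{k}|)$ for $|\mathbf{k}|\neq\omega$, and a symbol estimate showing that the inverse acts boundedly from $Y_{\mathrm r}$ to $X_{\mathrm r}$. The paper merely carries this out more explicitly, writing $L_{0}(|\mathbf{k}|)^{-1}$ in closed form rather than via the adjugate, and performing the Sobolev estimate for $\eta$ line by line.

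One minor correction: your parenthetical remark that there is ``at least a half-derivative to spare in every entry'' is too optimistic. With your asymptotic symbol matrix, the $(1,3)$ entry (multiplier $O(|\mathbf{k}|^{-2})$ from $H^{s-3/2}$ to $H^{s+1/2}$) and the $(2,1)$, $(3,1)$ entries (multiplier $O(1)$ from $H^{s}$ to $H^{s}$), as well as the $(2,2)$, $(3,2)$ entries, are in fact \emph{sharp}: they use up exactly the available smoothing and leave nothing to spare. This does not affect the argument, since sharp is still bounded, but it is worth noting because it shows the choice of exponents in $X_{0}$ and $Y_{0}$ is not arbitrary.
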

\begin{proof}
The mapping $L_{0}\big|_{ X_{\mathrm{r}}}$ is formally invertible on ${ Y_{\mathrm{r}}}$ with
\begin{align}
L_{0}^{-1}(\chi,{{\Psi^\prime}},{{\Psi}})^\mathrm{T}
&=
\sum_{\substack{k\in\mathscr{L}^{\star} ,\\|\mathbf{k}|\neq \omega}}
L_{0}(|\mathbf{k}|)^{-1}
(\chi_{_{\mathbf{k}}},{{\Psi^\prime_{_{\mathbf{k}}}}},{{\Psi_{_{\mathbf{k}}}}})^{\mathrm{T}}\mathrm{e}^{\mathrm{i}\mathbf{k}\cdot\mathbf{x}},
\label{eq: SieheStern}
\end{align}
where 
\begin{align*}
L_{0}(|\mathbf{k}|)^{-1}
&=
\frac{
\mu_{1}S_{1}^{-1}|\mathbf{k}|\tanh \dfrac{S_{1}|\mathbf{k}|}{\beta_{0}}
}{
\det L_{0}(|\mathbf{k}|)
}
\begin{pmatrix}
(\mu_{1}-1) |\mathbf{k}|\tanh  \dfrac{|\mathbf{k}|}{\beta_{0} }
&
1
& 
1
\\[1em]
-(\gamma_{0}+ |\mathbf{k}|^{2})
&
-(\mu_{1}-1)
&
- (\mu_{1}-1)
\\[1em]
0
&
0
&
0
\end{pmatrix}
\\
\\ &\qquad\mbox{}+
\frac{
|\mathbf{k}|\tanh  \dfrac{|\mathbf{k}|}{\beta_{0} }
}{
\det L_{0}(|\mathbf{k}|)
}
\begin{pmatrix}
0
&
\mu_{1} 
&
1
\\[1em]
0
&
0
&
0
\\[1em]
\gamma_{0} +|\mathbf{k}|^{2}
&
\mu_{1}(\mu_{1}-1)
&
\mu_{1}-1
\end{pmatrix}
-\frac{\gamma_{0}+|\mathbf{k}|^{2}}{\det L_{0}(|\mathbf{k}|)}
\begin{pmatrix}
&0&0&0& \\[1em]
& 0 & 1 & 0 & \\[1em]
& 0 & 1 & 0 &
\end{pmatrix}
\end{align*}
for $\mathbf{k}\in\mathscr{L}^{\star}$ with $|\mathbf{k}|>\omega$ 
and 
\begin{align*}
L_{0}(0)^{-1}
\begin{pmatrix} \chi_{_{\mathbf{0}}} \\ 0 \\ \Psi_{_{\mathbf{0}}} \end{pmatrix}
=
\begin{pmatrix}
-\gamma_{0}^{-1}
\Psi_{_{\mathbf{0}}}
\\
0
\\
-\chi_{_{\mathbf{0}}}
-(\mu_{1}-1)\gamma_{0}^{-1}
\Psi_{_{\mathbf{0}}}
\end{pmatrix}
\end{align*}

Denoting the right-hand side of equation \eqref{eq: SieheStern} by $(\eta,\Phi^\prime,\Phi)^\mathrm{T}$
and using the estimates
\begin{align*}
{S_{1}^{-1} |\mathbf{k}|\tanh\dfrac{S_{1} |\mathbf{k}|}{\beta_{0}}}
\lesssim
{ |\mathbf{k}|\tanh\dfrac{|\mathbf{k}|}{\beta_{0}}}
,\qquad
\frac{|\mathbf{k}|\tanh\dfrac{|\mathbf{k}|}{\beta_{0}}}{|\det L_{0}(|\mathbf{k}|)|}
\lesssim
|\mathbf{k}|^{-2}
\end{align*}
for $|\mathbf{k}|> \omega$,
one finds that 
\begin{align*}
\left\lVert  \eta \right\rVert_{ s+{1/2}}^{2}
&\lesssim
\left(|\chi_{_{\mathbf{0}}}|+|\Psi_{_{\mathbf{0}}}|\right)^{2}
+
\sum_{\substack{k\in\mathscr{L}^{\star}, 
\\|\mathbf{k}|\neq \omega,0}}
|\mathbf{k}|^{2s+1}
\frac{\left(|\mathbf{k}|\tanh \dfrac{|\mathbf{k}|}{\beta_{0}}\right)^{2}}{(\det L_{0}(|\mathbf{k}|))^{2}}
\left( 
|\mathbf{k}|\tanh \dfrac{|\mathbf{k}|}{\beta_{0}}\
|\chi_{_{\mathbf{k}}}|
+|\Psi^\prime_{_{\mathbf{k}}}|
+|\Psi_{_{\mathbf{k}}}|
 \right)^{2}
\\
&\lesssim
|\chi_{_{\mathbf{0}}}|^{2}+|\Psi_{_{\mathbf{0}}}|^{2}
+
\sum_{\substack{k\in\mathscr{L}^{\star}, 
\\|\mathbf{k}|\neq \omega,0}}
|\mathbf{k}|^{2s-3}
\left( 
|\mathbf{k}|^{2}
|\chi_{_{\mathbf{k}}}|^{2}
+|\Psi^{\prime}_{_{\mathbf{k}}}|^2
+|\Psi_{_{\mathbf{k}}}|^{2}
 \right)
\\
&\lesssim
\left\lVert  \chi \right\rVert_{ s}^{2}
+\left\lVert  {{\Psi^\prime}} \right\rVert_{ s-1}^{2}
+\left\lVert  {{\Psi}} \right\rVert_{ s-{3/2}}^{2};
\end{align*}
similar calculations yield
$$
\left\lVert  \Phi^\prime \right\rVert_{ s}^{2}
\lesssim
\left\lVert  \chi \right\rVert_{ s}^{2}
+\left\lVert  {{\Psi^\prime}} \right\rVert_{ s-1}^{2}
+\left\lVert  {{\Psi}} \right\rVert_{ s-{3/2}}^{2}
,
\qquad
\left\lVert  \Phi \right\rVert_{ s}^{2}
\lesssim
\left\lVert  \chi \right\rVert_{ s}^{2}
+\left\lVert  {{\Psi^\prime}} \right\rVert_{ s-1}^{2}
+\left\lVert  {{\Psi}} \right\rVert_{ s-{3/2}}^{2}
.
$$
We conclude that $L_{0}^{-1}: Y_{\mathrm{r}}\to X_{\mathrm{r}}$ exists and is continuous.
\end{proof}
\begin{corollary}
The operator $L_{0}:X_0 \to Y_0$ is a Fredholm operator of index zero.
\end{corollary}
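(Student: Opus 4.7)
My plan is to exploit the direct-sum decomposition $X_0 = X_\omega \oplus X_\mathrm{r}$, $Y_0 = Y_\omega \oplus Y_\mathrm{r}$ together with the invariance property $L_0[X_\omega]\subseteq Y_\omega$, $L_0[X_\mathrm{r}]\subseteq Y_\mathrm{r}$ that was noted just before the lemma. With respect to this splitting $L_0$ is block-diagonal, so its kernel and image decompose as
\[
\ker L_0 = \ker L_0|_{X_\omega}\oplus \ker L_0|_{X_\mathrm{r}},\qquad
\mathrm{im}\, L_0 = \mathrm{im}\, L_0|_{X_\omega}\oplus \mathrm{im}\, L_0|_{X_\mathrm{r}},
\]
and it suffices to prove that each summand is Fredholm of index zero and add indices.

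First I invoke Lemma \ref{lem: Lo}, which says that $L_0|_{X_\mathrm{r}}:X_\mathrm{r}\to Y_\mathrm{r}$ is an isomorphism; this summand is trivially Fredholm of index zero. For the $\omega$-part, observe that the Fourier characterisation of $X_\omega$ and $Y_\omega$ shows that they consist of trigonometric polynomials supported on the finite set $\{\mathbf{k}\in\mathscr{L}^\star:|\mathbf{k}|=\omega\}$ with values in $\mathbb{C}^3$ subject to the reality constraint $\mathbf{v}_{-\mathbf{k}}=\bar{\mathbf{v}}_{\mathbf{k}}$. Both spaces are therefore finite-dimensional of the same real dimension (six times the number of dual-lattice pairs of length $\omega$).

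Any linear map between finite-dimensional spaces of equal dimension is automatically Fredholm, and the rank-nullity theorem gives
\[
\mathrm{ind}\, L_0|_{X_\omega} = \dim X_\omega - \dim Y_\omega = 0.
\]
Summing the two contributions yields that $L_0:X_0\to Y_0$ is Fredholm of index zero, as required. No genuine obstacle is anticipated here: all the real work has already been done in Lemma \ref{lem: Lo}, and the present corollary is essentially a bookkeeping consequence of the block-diagonal structure of $L_0$ together with the finite-dimensionality of $X_\omega$ and $Y_\omega$.
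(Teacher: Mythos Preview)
Your proof is correct. Both you and the paper exploit the block-diagonal structure of $L_0$ with respect to the splitting $X_0=X_\omega\oplus X_\mathrm{r}$, $Y_0=Y_\omega\oplus Y_\mathrm{r}$ and invoke Lemma~\ref{lem: Lo} for the $X_\mathrm{r}$-part, but the treatment of the $\omega$-block differs.

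You argue abstractly: since $|\mathbf{k}|=\omega$ forces $\mathbf{k}\neq\mathbf{0}$, the zero-mean constraint in the second factor is vacuous on the $\omega$-modes, and the differing Sobolev indices are irrelevant on a finite set of Fourier modes; hence $\dim X_\omega=\dim Y_\omega$ and rank--nullity gives index zero without any computation. The paper instead performs the explicit check that $\mathbf{v}\notin\im L_0(\omega)$ (equivalently $\ker L_0(\omega)^2=\ker L_0(\omega)$), from which it obtains the concrete decomposition $Y_\omega=\im L_0|_{X_\omega}\oplus\ker L_0$ and hence $Y_0=\im L_0\oplus\ker L_0$. Your route is shorter and entirely adequate for the corollary as stated. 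The paper's route yields the additional piece of information that $\ker L_0$ is itself a topological complement of $\im L_0$ in $Y_0$; this is used immediately afterwards to define the projection $P$ onto $\ker L_0$ along $\im L_0$ and to transfer the whole argument to the symmetric subspaces $X_\mathrm{sym}$, $Y_\mathrm{sym}$. If you adopt your argument in place of the paper's, you would need to supply this splitting separately when it is called upon.
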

\begin{proof}
A straightforward calculation shows that $\mathbf{v} \notin\im{L_{0}(\omega)}$ 
(so that $\ker (L_{0}(\omega))^{2}=\ker L_{0}(\omega)$) 
and hence 
\begin{align*}
 Y_\omega
&=
\bigoplus_{\substack{\mathbf{k} \in\mathscr{L}^{\star}, \\
|\mathbf{k}|=\omega}}
\Big(\big( \im{L_{0}(\omega)}\oplus  \ker L_{0}(\omega)\big)\sin(\mathbf{k}\cdot\mathbf{x})
\oplus
\big(\im{L_{0}(\omega)}\oplus  \ker L_{0}(\omega)\big)\cos•\cdot\mathbf{x})\Big)
\\
&
=
\im{L_{0}}\big|_{ X_\omega}\oplus\ker L_{0}.
\end{align*}
Using this decomposition and Lemma \ref{lem: Lo}, we find that 
$$
Y_0 = Y_\omega \oplus Y_{\mathrm r} =\left (\im{L_{0}}\big|_{ X_\omega} \oplus \ker L_{0}\right) \oplus \im L_0\big|_{U_{\mathrm r}}
=
\im L_{0} \oplus \ker L_{0}.
$$
It follows that 
$\im L_{0}$ is closed and 
 $\mathrm{codim}\, \im L_{0}=\dim \ker L_{0} ,$ 
 so that $L_{0}:X_0 \to Y_0$ is a Fredholm operator of index zero. 
 \end{proof}

Because the kernel of $L_{0}$ is  multidimensional, we can not use Theorem \ref{thm: CRT} directly. 
To overcome this problem, we recall that ${\mathcal G}$ (and hence $L_0$)
is invariant under certain rotations (see below)
and seek solutions to \eqref{eq: Nfinal again}  in $X_0$ that have this rotational symmetry,
denoting the relevant  subspaces of $H^{r}_{\mathrm{per}}(\Gamma) ,
\bar{H}^{r}_{\mathrm{per}}(\Gamma),X_0$ and $Y_0$  
by $H^{r}_{\mathrm{sym}}(\Gamma),
\bar{H}^{r}_{\mathrm{sym}}(\Gamma), X_\mathrm{sym} $ and $Y_\mathrm{sym} , $ 
so that 
$$
X_\mathrm{sym}=
H^{s+{1/2}}_{\mathrm{sym}}(\Gamma)
\times
\bar{H}^{s}_{\mathrm{sym}}(\Gamma)
\times
H^{s}_{\mathrm{sym}}(\Gamma),
\qquad
Y_\mathrm{sym}=
H^{s}_{\mathrm{sym}}(\Gamma)
\times
\bar{H}^{s-1}_{\mathrm{sym}}(\Gamma)
\times
H^{s-{3/2}}_{\mathrm{sym}}(\Gamma)
$$
for $s>{5/2}$. Note that 
$X_\mathrm{sym}$ and $Y_\mathrm{sym}$ are invariant under $P_\omega$, so that
according to the above analysis $L_{0}:X_\mathrm{sym} \to Y_\mathrm{sym}$ is a Fredholm operator of index zero and
$$Y_\mathrm{sym}=\im L_0 \oplus \ker L_0.$$

\begin{enumerate}
\item\vspace{-\baselineskip}
For rolls we consider functions that are independent of the $z$-coordinate and lie 
in the subspace
\begin{align*}
H^{r}_{\mathrm{sym}}(\Gamma)
=
\left\{ \zeta\in H^{r}_{\mathrm{per}}(\Gamma):\
\zeta\left( x \right)  =  \zeta\left( -x \right)
 \right\}
\end{align*}
of functions which are invariant under rotations through 
$\pi.$ 
\item
For rectangles 
we work with the subspace
\begin{align*}
H^{r}_{\mathrm{sym}}(\Gamma)
=
\left\{ \zeta\in H^{r}_{\mathrm{per}}(\Gamma):\
\zeta\left( x,z \right)  =  \zeta\left( z,-x \right)
 \right\}
\end{align*}
of functions which are invariant under rotations through 
${\pi/2}.$  
\item
For hexagons 
we work with the subspace
\begin{align*}
H^{r}_{\mathrm{sym}}(\Gamma)
=
\left\{ \zeta\in H^{r}_{\mathrm{per}}(\Gamma):\
\zeta\left( x,z \right)=
\zeta\left( \frac{1}{2}
\left( x-\sqrt{3}z \right),
\frac{1}{2}\left( \sqrt{3}x+z \right) \right)
 \right\}
\end{align*}
of functions which are invariant under rotations through 
${\pi/3}.$  
\end{enumerate}
These restrictions ensure that $\dim \ker L_{0}=1$ 
with $\ker L_{0}=\langle  v_{0} \rangle,$ 
where  $v_{0}=\mathbf{v} e_{1}(x,z)$ 
with
$$e_1(x,z) = \left\{ \begin{array}{ll} \cos\omega x & \qquad \mbox{(rolls)} \\
\cos\omega x+\cos\omega z & \qquad \mbox{(rectangles)} \\
\cos \omega x
+\cos   \frac{\omega}{2}
\left(x+\sqrt{3}z\right)
+\cos \frac{\omega}{2}
\left(x-\sqrt{3}z\right)  & \qquad \mbox{(hexagons).}
\end{array}\right.$$
The projection 
$P:Y_\mathrm{sym}\to Y_\mathrm{sym}$ onto $\ker L_{0}$ along $\im L_{0}$ is given by 
\begin{equation}
P(\chi, \Psi^\prime, \Psi)^\mathrm{T}
=
{C^{\star}}((\chi_{_{(\omega,0)}}, \Psi^\prime_{_{(\omega,0)}}, \Psi_{_{(\omega,0)}})^{\mathrm{T}}\cdot\mathbf{v}^{\star})\mathbf{v}\, e_1(x,z),
\label{eq: formula for P}
\end{equation}
where

$$
\mathbf{v}^\star=\left(\begin{array}{c} 
\cfrac{\gamma_0+\omega^2}{\mu_1-1} \\
\cfrac{\gamma_0+\omega^2}{(\mu_1-1)^2\omega}
\left(S_1^{-1}\coth S_1\tilde{\omega}+\coth \tilde{\omega}\right) \\
1\\
\end{array}
\right),
$$
$$
C^{\star}=
\mu_{1}\omega
S_{1}^{-1}\tanh S_1\tilde{\omega}
-
\mu_{1}^{2}
S_{1}^{-1} 
\dfrac{
\tanh S_1 \tilde{\omega}
\big(\coth\tilde{\omega}
+ S_{1}\coth S_1 \tilde{\omega}\big)	
}{	
\tanh\tilde{\omega}\big(
\mu_{1} \coth\tilde{\omega}
+ S_{1}\coth S_1 \tilde{\omega}\big)}
+1
$$
($\mathbf{v}^\star\in\mathbb{R}^{3}$  solves the equation $L_{0}(\omega)^{\mathrm{T}}\mathbf{v}^\star=\mathbf{0}$
and $C^{\star}=(\mathbf{v}\cdot\mathbf{v}^\star)^{-1}$). 
\begin{lemma}\label{lem: Trans}
The transversality condition $P(\mathrm{d}_{1}\mathrm{d}_{2}{\mathcal G}[\gamma_0,0](1,v_{0}))\neq 0$ is satisfied.
\end{lemma}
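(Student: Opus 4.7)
The plan is to exploit the fact that the bifurcation parameter $\gamma$ enters the system \eqref{eq: NNBC1}--\eqref{eq: NNBC3} in only one place, namely as the coefficient of $-\eta$ in the third component. Consequently $\mathrm{d}_1{\mathcal G}[\gamma_0,(\eta,\Phi^\prime,\Phi)]=(0,0,-\eta)^{\mathrm T}$ identically, so that differentiating once more with respect to the second argument at the origin simply substitutes $v_0=\mathbf{v}\,e_1(x,z)$ into the $\eta$-slot. Writing $\mathbf{v}=(\mathbf{v}_1,\mathbf{v}_2,\mathbf{v}_3)^{\mathrm T}$, I therefore expect the computation
\[
\mathrm{d}_1\mathrm{d}_2{\mathcal G}[\gamma_0,0](1,v_0)
=\begin{pmatrix} 0 \\ 0 \\ -\mathbf{v}_1\, e_1(x,z)\end{pmatrix}.
\]

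Next I would apply the explicit projection formula \eqref{eq: formula for P} to this vector. In each of the three symmetry classes the function $e_1(x,z)$ has Fourier coefficient $\tfrac{1}{2}$ at the mode $\mathbf{k}=(\omega,0)\in\mathscr{L}^{\star}$, so the relevant input to $P$ is $(0,0,-\mathbf{v}_1/2)^{\mathrm T}$. Pairing with $\mathbf{v}^{\star}$ from \eqref{eq: bold v} extracts only the third component of $\mathbf{v}^{\star}$, which equals $1$; hence
\[
P\!\left(\mathrm{d}_1\mathrm{d}_2{\mathcal G}[\gamma_0,0](1,v_0)\right)
=-\tfrac{1}{2}C^{\star}\mathbf{v}_1\,\mathbf{v}\,e_1(x,z).
\]

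The remaining task is to confirm the two scalar quantities are non-zero. The factor $C^{\star}=(\mathbf{v}\cdot\mathbf{v}^{\star})^{-1}$ is finite and non-zero by its very definition (the duality $L_0(\omega)^{\mathrm T}\mathbf{v}^{\star}=\mathbf{0}$ with $\mathbf{v}\notin\im L_0(\omega)$, noted when proving that $L_0$ is Fredholm of index zero, forces $\mathbf{v}\cdot\mathbf{v}^{\star}\neq 0$). For $\mathbf{v}_1$ the first entry of \eqref{eq: bold v} is the sum of $1$ and a manifestly positive quantity, divided by $\mu_1-1\neq 0$, so $\mathbf{v}_1\neq 0$. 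Together these give $P(\mathrm{d}_1\mathrm{d}_2{\mathcal G}[\gamma_0,0](1,v_0))\neq 0$, as required.

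I anticipate no serious obstacle: the argument is essentially bookkeeping, and the key structural observation (linearity of $\mathcal G$ in $\gamma$, concentrated in one component) turns the transversality condition into an algebraic check on two quantities that have already been exhibited in the text. The only mild subtlety is ensuring that the projection formula \eqref{eq: formula for P}, which is written in terms of the single Fourier mode $(\omega,0)$, correctly captures the projection on $Y_\mathrm{sym}$ for the rectangle and hexagon cases; this is legitimate because the imposed rotational symmetry forces all Fourier coefficients at wavevectors of length $\omega$ to be determined by the one at $(\omega,0)$, which is precisely why the restriction to $X_\mathrm{sym}$ reduces $\dim\ker L_0$ to $1$.
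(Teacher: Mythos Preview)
Your proposal is correct and follows essentially the same route as the paper: both compute $\mathrm{d}_1\mathrm{d}_2\mathcal{G}[\gamma_0,0](1,v_0)=(0,0,-\mathbf{v}_1 e_1)^{\mathrm T}$ from the fact that $\gamma$ enters only linearly in the third component, then apply the projection formula \eqref{eq: formula for P}. The paper simply rewrites $\mathbf{v}_1$ as $(\mu_1-1)^{-1}S_1^{-1}\tanh S_1\tilde\omega\,(\mu_1\coth\tilde\omega+S_1\coth S_1\tilde\omega)$ and leaves the non-vanishing implicit, whereas you spell out why $C^\star$ and $\mathbf{v}_1$ are non-zero; the harmless factor $\tfrac12$ from the Fourier coefficient of $e_1$ does not appear in the paper's displayed formula but is immaterial to the conclusion.
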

\begin{proof}
It follows from the calculation
$
\mathrm{d}_{1}\mathrm{d}_{2}{\mathcal G}[\gamma_0,0](1,(\eta,\Phi^\prime,\Phi)^\mathrm{T})
=(0,0,-\eta)^\mathrm{T}$ and the formula \eqref{eq: formula for P} for $P$
that
$$\hspace{0.35cm}P(\mathrm{d}_{1}\mathrm{d}_{2}{\mathcal G}[\gamma_0,0](1,v_{0}))
=-C^{\star}(\mu_1-1)^{-1}S_1^{-1}\tanh S_1\tilde{\omega}(\mu_1\coth\tilde{\omega}+S_1\coth S_1\tilde{\omega})\mathbf{v}\, e_1(x,z).\hspace{0.32cm}\qedhere$$
\end{proof}
The facts established 
above 
confirm that the hypotheses of 
Theorem \ref{thm: CRT} 
are satisfied, 
an application of which yields Theorem \ref{thm: main result 1}.

\section{The bifurcating solution branches} \label{ch: trans, sub or super}

In this section we examine the bifurcating solution branches 
identified in Theorem \ref{thm: main result 1} by applying the following
supplement to the Crandall-Rabinowitz theorem.

\begin{theorem}\label{thm: CRT branches}

Suppose that the hypotheses of Theorem \ref{thm: CRT} hold. In the
notation of that theorem, 
let $Q:X\rightarrow  X$ be a projection with $\im Q = \ker L$ 
and 
the Taylor series of 
the functions \linebreak $w:(-\varepsilon,\varepsilon)\rightarrow V,$ $ \lambda: (-\varepsilon,\varepsilon)\rightarrow \mathbb{R}$ be given by 
$$
\lambda(s) = \lambda_{{{0}}}+s \lambda_{{{1}}}+s^{2}\lambda_{{{2}}}+\ldots, \qquad
w(s) =  v_{0}+s w_{1}+\ldots,
$$
where $\lambda_{{{1}}},\lambda_{{{2}}},\ldots\in \mathbb{R}$ and 
$w_{1},w_{2},\ldots\in \ker Q.$ 
\begin{enumerate}[label=(\roman{enumi})]
\item
The coefficient $\lambda_{{{1}}}$ satisfies the equation 
$$
P\left(\frac{1}{2!}{\mathrm{d}_{2}^{2}{\mathcal F}[\lambda_{{{0}}},0]({v_{0},v_{0}}})\right)
+\lambda_{{{1}}}P(\mathrm{d}_{1}\mathrm{d}_{2}{\mathcal F}[\lambda_{{{0}}},0](1,v_{0}))
=0
$$
and the bifurcation is transcritical if $\lambda_{{{1}}}$ is non-zero (see Figure \ref{pic: CR trans}). 
\begin{figure}[H]
\centering\includegraphics{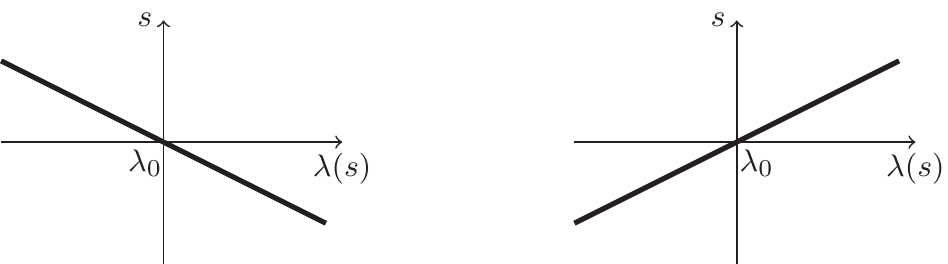}
\caption{\label{pic: CR trans}
Transcritical Crandall-Rabinowitz bifurcation for $\lambda_1<0$ (left) and $\lambda_1>0$ (right).} 
\end{figure}

\item
Suppose that $\lambda_{{{1}}}$ is zero. The coefficient $\lambda_{{{2}}}$ satisfies the equation 
$$
P\left( {\mathrm{d}_{2}^{2}{\mathcal F}[\lambda_{{{0}}},0](v_{0},w_{1})}
+\frac{1}{3!}\mathrm{d}_{2}^{3}{\mathcal F}[\lambda_{{{0}}},0](v_{0},v_{0},v_{0}) \right)+
\lambda_{{{2}}}P\left( \mathrm{d}_{1}\mathrm{d}_{2}{\mathcal F}[\lambda_{{{0}}},0](1,v_{0}) \right)
=0,
$$
where $w_{1}\in  \ker Q$ solves the equation 
$$
\mathrm{d}_{2}{\mathcal F}[\lambda_{{{0}}},0](w_{1})
=
-\frac{1}{2!}\mathrm{d}_{2}^{2}{\mathcal F}[\lambda_{{{0}}},0](v_{0},v_{0}).
$$
The bifurcation is {supercritical} for $\lambda_{{{2}}}>0$ 
and  {subcritical}  for $\lambda_{{{2}}}<0$ (see Figure \ref{pic: CR supsub}). 
\begin{figure}[H]
\centering\includegraphics{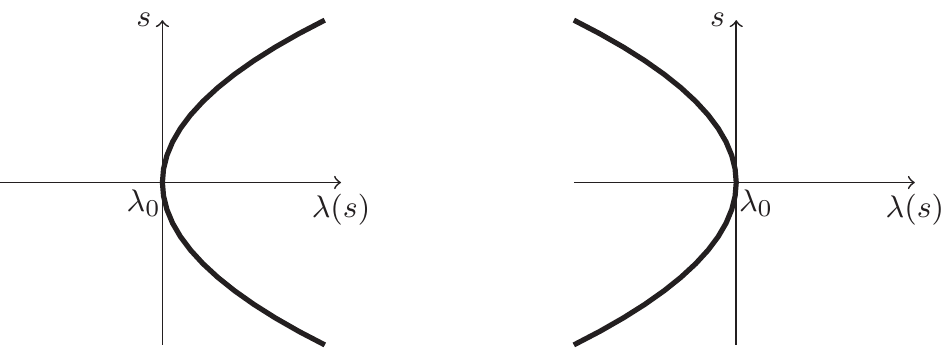}
\caption{\label{pic: CR supsub}
Crandall-Rabinowitz bifurcation for $\lambda_{{{1}}}=0$, $\lambda_{{{2}}}>0 $ (supercritical, left) and $\lambda_{{{1}}}=0$, $\lambda_{{{2}}}<0$ (subcritical,right).} 
\end{figure}
\end{enumerate}
\end{theorem}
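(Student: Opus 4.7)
The plan is to differentiate the identity $\mathcal{F}(\lambda(s),sw(s))=0$ at $s=0$ and project the resulting equations onto $\ker L$ via $P$. Two algebraic facts drive the argument: (a) because $\mathcal{F}(\lambda,0)=0$ for all $\lambda$, every pure derivative $d_1^k\mathcal{F}[\lambda_0,0]$ vanishes identically, so the only terms surviving in the Maclaurin expansion of $\mathcal{F}$ about $(\lambda_0,0)$ are the mixed derivatives $d_1^j d_2^k\mathcal{F}[\lambda_0,0]$ with $k\ge 1$; and (b) $\im L=\ker P$, so $P$ annihilates every expression of the form $L(\cdot)$.

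For part (i), I would substitute $\lambda(s)-\lambda_0=s\lambda_1+s^2\lambda_2+\cdots$ and $v(s):=sw(s)=sv_0+s^2w_1+s^3w_2+\cdots$ into the expansion of $\mathcal{F}$ and collect powers of $s$. The order $s$ contribution is automatically $Lv_0=0$, and at order $s^2$ only the terms with $(j,k)\in\{(0,1),(0,2),(1,1)\}$ contribute, giving
\begin{equation*}
Lw_1+\tfrac{1}{2}d_2^2\mathcal{F}[\lambda_0,0](v_0,v_0)+\lambda_1\, d_1d_2\mathcal{F}[\lambda_0,0](1,v_0)=0.
\end{equation*}
Applying $P$ annihilates $Lw_1$ and yields the stated equation for $\lambda_1$. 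By the transversality condition $P(d_1d_2\mathcal{F}[\lambda_0,0](1,v_0))\neq 0$, so $\lambda_1$ is uniquely determined and its sign decides the direction of a transcritical branch.

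For part (ii), assuming $\lambda_1=0$ the order $s^2$ equation becomes $Lw_1=-\tfrac{1}{2}d_2^2\mathcal{F}[\lambda_0,0](v_0,v_0)$, whose right-hand side lies in $\im L=\ker P$ precisely because the $\lambda_1=0$ case of part (i) gives $P(\tfrac{1}{2}d_2^2\mathcal{F}[\lambda_0,0](v_0,v_0))=0$. This determines $w_1$ modulo $\ker L$, and the normalisation $w_1\in\ker Q$ singles out a unique solution. Proceeding to order $s^3$, where the symmetric bilinear form $d_2^2\mathcal{F}[\lambda_0,0](v(s),v(s))$ contributes a cross-term $2d_2^2\mathcal{F}[\lambda_0,0](v_0,w_1)$ and $d_2^3\mathcal{F}[\lambda_0,0](v(s),v(s),v(s))$ contributes $d_2^3\mathcal{F}[\lambda_0,0](v_0,v_0,v_0)$, one finds
\begin{equation*}
Lw_2+d_2^2\mathcal{F}[\lambda_0,0](v_0,w_1)+\tfrac{1}{6}d_2^3\mathcal{F}[\lambda_0,0](v_0,v_0,v_0)+\lambda_2\, d_1d_2\mathcal{F}[\lambda_0,0](1,v_0)=0.
\end{equation*}
Applying $P$ once more gives the asserted equation for $\lambda_2$, and by transversality $\lambda_2$ is again uniquely determined; its sign classifies the local branch $s\mapsto(\lambda(s),sw(s))$ as supercritical or subcritical.

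The main obstacle is purely organisational, namely the careful bookkeeping of the Taylor coefficients of the composed analytic function $s\mapsto\mathcal{F}(\lambda(s),sw(s))$ at orders $s^2$ and $s^3$. There is no substantial analytic ingredient beyond the chain rule, the identity $\mathcal{F}(\lambda,0)\equiv 0$, the relation $\im L=\ker P$, and the transversality hypothesis.
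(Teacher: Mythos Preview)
Your argument is correct and is the standard derivation of these bifurcation formulae: expand $\mathcal{F}(\lambda(s),sw(s))=0$ in powers of $s$, use $\mathcal{F}(\lambda,0)\equiv 0$ to kill pure $\lambda$-derivatives, and project with $P$ at orders $s^2$ and $s^3$. The bookkeeping at order $s^3$ is right (in particular, when $\lambda_1=0$ the potentially troublesome terms $\lambda_1\,\mathrm{d}_1\mathrm{d}_2\mathcal{F}(1,w_1)$, $\tfrac{1}{2}\lambda_1\,\mathrm{d}_1\mathrm{d}_2^2\mathcal{F}(1,v_0,v_0)$ and $\tfrac{1}{2}\lambda_1^2\,\mathrm{d}_1^2\mathrm{d}_2\mathcal{F}(1,1,v_0)$ all vanish, leaving exactly the four terms you list).

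There is nothing to compare against: the paper states this theorem without proof, treating it as a standard supplement to the Crandall--Rabinowitz theorem (cf.\ Buffoni and Toland \cite[Chapter 8]{BuffoniToland}), and proceeds directly to its application. Your write-up would serve perfectly well as the omitted proof.
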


To apply this theorem we write the Taylor series of the analytic functions
$w:(-\varepsilon,\varepsilon)\rightarrow V_\mathrm{sym}$,
$ \gamma: (-\varepsilon,\varepsilon)\rightarrow \mathbb{R}$ given in
Theorem \ref{thm: main result 1} as 
$$
\gamma(s) = \gamma_{0}+s \gamma_{1}+s^{2}\gamma_{2}+\ldots, \qquad
w(s) =  v_{0}+sw_{1}+\ldots
$$
with $\gamma_{1},\gamma_{2},\ldots\in\mathbb{R}$ and $w_{1},w_{2},\ldots\in \ker (I-P)$ 
and introduce the operators  
$$
L_{1}=\mathrm{d}_{1}\mathrm{d}_{2}{\mathcal G}[\gamma_0,0](1,\cdot),
\qquad
Q_{0}=\frac{1}{2!}\mathrm{d}_{2}^{2}{\mathcal G}[\gamma_0,0],
\qquad
C_{0}=\frac{1}{3!}\mathrm{d}_{2}^{3}{\mathcal G}[\gamma_0,0].
$$
One finds that $L_1(v)=(0,0,-\eta)^\mathrm{T}$ and
\begin{align*}
	Q_0&(v,v)
	\\
	&=
	\begin{pmatrix}
	0
	\\[1em]
	G_{2}^\prime  
	+ G_{2} +\dot{\mu}_{1} H_{2} 
	+\tfrac{1}{2}\left(\ddot{\mu}_{1} H_{1}^{2}+\dot{\mu}_{1}|\nabla\Phi|^{2}\right)
	\\[1em]
	\tfrac{1}{2}
	\left(
	G^{\prime \, 2}_{1}\!-|\nabla\Phi^\prime|^{2}
	+(\mu_{1}-\dot{\mu}_{1}) H_{1}^{2}
	+\mu_{1}|\nabla\Phi|^{2}
	-2 G_{1} H_{1} 
	-\ddot{\mu}_{1} H_{1}^{2}-\dot{\mu}_{1}|\nabla\Phi|^{2}\right)
	\\
	-\mu_{1} G_{2}^\prime  
	-G_{2} -\dot{\mu}_{1} H_{2} 
\end{pmatrix}
\end{align*}
\begin{align*}
	C_0&(v,v,v)
	\\
	& =	\begin{pmatrix}
	0
	\\[1.5em]
	G_{3}^\prime 
	+ 
	G_{3} +\dot{\mu}_{1} H_{3} 
	+\ddot{\mu}_{1} H_{1} H_{2} 
	+\tfrac{1}{2}\left(\ddot{\mu}_{1}-\dot{\mu}_{1}\right) |\nabla\Phi|^{2}H_{1} 
	-\dot{\mu}_{1}\nabla\eta\cdot\nabla\Phi \ H_{1} 
	+\tfrac{1}{6}\dddot{\mu}_{1} H_{1}^{3}
	\\[1.5em]
	-
	\mu_{1} G_{3}^\prime 
	-
	G_{3} -\dot{\mu}_{1} H_{3} 
	-\ddot{\mu}_{1} H_{1} H_{2} 
	-\tfrac{1}{2}\left(\ddot{\mu}_{1}-\dot{\mu}_{1}\right) |\nabla\Phi|^{2}H_{1} 
	+\dot{\mu}_{1}\nabla\eta\cdot\nabla\Phi \ H_{1} 
	-\tfrac{1}{6}\dddot{\mu}_{1} H_{1}^{3}
	\\[0.25em]
	+
G_{1}^\prime (
G_{2}^\prime -\nabla\eta\cdot\nabla\Phi^\prime
)
-H_{1} (G_{2} +\mu_{1}\nabla\eta\cdot\nabla\Phi)
	+(\mu_{1}-\dot{\mu}_{1})H_{1} H_{2} 
	+
	\tfrac{1}{3}
	(\dot{\mu}_{1}-\ddot{\mu}_{1})
	H_{1}^{3} 
	\\[0.25em]
	-G_{1} H_{2}+
	\eta_{x}^{2}\eta_{zz}+\eta_{z}^{2}\eta_{xx}
	-2\eta_{x}\eta_{z}\eta_{xz}
	-\tfrac{3}{2}|\nabla\eta|^{2}\eta
\end{pmatrix}
,	
\end{align*}
where  $v=(\eta, \Phi^\prime, \Phi)^\mathrm{T}$ and $\dddot{\mu}_1=\dddot{\mu}(1)$.
Theorem \ref{thm: CRT branches} shows that
$$
\gamma_{1}
=
-\frac{\left[{Q_{0}(v_{0},v_{0})}\right]_{1}\cdot\mathbf{v}^{\star}
}{
\left[{L_{1}v_{0}}\right]_{1}\cdot\mathbf{v}^{\star},
}
$$
where
$$\left[\zeta\right]_{1}={\zeta}_{_{(\omega,0)}}=\int_\Gamma \zeta e_1 $$
(with componentwise extension),
and
\begin{align*}
\gamma_{2}
&=
-\frac{\left[{2Q_{0}(v_{0},w_{1})
+C_{0}(v_{0},v_{0},v_{0})}\right]_{1}\cdot\mathbf{v}^{\star}}
{\left[{L_{1}v_{0}}\right]_{1}\cdot\mathbf{v}^{\star}}
\end{align*}
for $\gamma_1=0$, where 
$w_{1}\in\ker (I-P)$ solves the equation
\begin{align*}
L_{0}w_{1}
&=
-Q_{0}(v_{0},v_{0}).
\end{align*}

A straightforward calculation shows $Q_0(v_0, v_0)$ can be written as a sum
in which each summand is a constant vector multiplied by either $e_1^2$
or $|\nabla e_1|^2$.
For hexagons we find that $\gamma_1$ generally does not vanish,
while 
for rolls
$$
\left[{e_{1}^{2}}\right]_{1}
= 
\left[{\frac{1}{2}
+\frac{1}{2} \cos 2 \omega x}\right]_{1}
=0
,
\qquad
\left[{|\nabla e_{1}|^{2}}\right]_{1}
= 
\left[{\frac{1}{2}
-\frac{1}{2}\cos 2 \omega x}\right]_{1}
=0,
$$
and for rectangles
\begin{align*}
\left[e_{1}^{2}\right]_{1}
&= 
\left[\frac{1}{2}
+
{\cos \omega (x+z)+\cos \omega (x-z)}
+\frac{1}{2}({\cos 2 \omega x+\cos 2\omega z)}\right]_{1}
=0,
\\
\left[|\nabla e_{1}|^{2}\right]_{1}
&= 
\left[1
-\frac{1}{2}({\cos 2 \omega x \ +\cos 2\omega z)}\right]_{1}
=0,
\end{align*}
so that  in both cases $\gamma_{1}=0$.
Attempting to compute explicit general expressions for $\gamma_2$ leads to unwieldy formulae
(it appears more appropriate to calculate them numerically for a specific choice of $\mu$, that is a
specific magnetisation law). Here we confine ourselves to stating the values of the coefficients for two particular special cases.
\begin{enumerate}
\item
\emph{Constant relative permeability $\mu$ (corresponding to a linear magnetisation law):}
We find that
\begin{align*}
\label{eq: H2rolls}
\gamma_{2}
&
=
-\mu\frac{\mu-1}{\mu+1} \omega^{2}
\Bigg(\!\!\!-\mu C_{2}
\bigg(\bigg(8(\mu+1)\omega^{2}\frac{4\omega^{2}+\gamma_{0}}{t_{2}}-4(\mu^{2}-1)^{2}\omega^{3}\bigg)(1-t_{1}t_{2})^{2}
\nonumber\\
&
\hspace{1.35in}\mbox{}
+\omega^{3}(\mu-1)^{4}
\bigg(
2(
1+t_{1}^{2})(1-t_{1}t_{2})
-\frac{1}{4}(1+t_{1}^{2})^{2}
\bigg)\bigg)
\nonumber\\
&\hspace{0.85in}\mbox{}
-\frac{\mu\omega^{2}}{4\gamma_{0}}\frac{(\mu-1)^{3}}{\mu+1}(1-t_{1}^{2})^{2}
-
\frac{(\mu+1)^{2}\omega}{\mu-1}t_{1}
\bigg(
\frac{3\ \omega^{2}}{8(\gamma_{0}+\omega^{2})}
-\frac{3}{2}+t_{1}t_{2}
\bigg)\!\!\Bigg)
\end{align*}
for rolls and
\begin{align*}
\gamma_{2}
&=
-\mu\frac{\mu-1}{\mu+1} \omega^{2}
\Bigg(\!\!\!
-\mu C_{\sqrt{2}}
\bigg(\bigg(8(\mu+1)\omega^{2}\frac{2\omega^{2}+\gamma_{0}}{t_{\sqrt{2}}}-2\sqrt{2}(\mu^{2}-1)^{2}\omega^{3}\bigg)(1-\sqrt{2}t_{1}t_{\sqrt{2}})^{2}
\nonumber\\
&
\hspace{1.45in}\mbox{}
-\frac{\omega^{3}(\mu-1)^{4}}{2}\sqrt{2}\bigg(t_{1}^{4}-4t_{1}^{2}(1-\sqrt{2}t_{1}t_{\sqrt{2}})\bigg)\bigg)
\nonumber\\
&
\hspace{0.85in}
\mbox{}-\mu C_{2}
\bigg(\bigg(8(\mu+1)\omega^{2}\frac{4\omega^{2}+\gamma_{0}}{t_{2}}-4(\mu^{2}-1)^{2}\omega^{3}\bigg)(1-t_{1}t_{2})^{2}
\nonumber\\
&
\hspace{1.35in}\mbox{}
+\omega^{3}(\mu-1)^{4}
\bigg(
2(
1+t_{1}^{2})(1-t_{1}t_{2})
-\frac{1}{4}(1+t_{1}^{2})^{2}
\bigg)\bigg)
\nonumber\\
&\hspace{0.85in}
\mbox{}
-\frac{\omega (\mu+1)^{2}}{2(\mu-1)}t_{1}
\bigg(
\frac{5\omega^{2}}{4(\gamma_{0}+\omega^{2})}\!-\!9\!+2t_{1}t_{2}\!+\!4\sqrt{2}t_{1}t_{\sqrt{2}}
\bigg)\\
&\hspace{0.85in}
-\frac{\omega^{2}\mu(\mu-1)^{3}}{2\gamma_{0}(\mu+1)}(1-t_{1}^{2})^{2}\Bigg)
\end{align*} 
for rectangles, where $t_1=\tanh \tilde{\omega}$, $t_{_{\sqrt{2}}}=\tanh \sqrt{2}\tilde{\omega}$,
$t_2 = \tanh 2\tilde{\omega}$, $\tilde{\omega}=\omega/\beta_0$ and
\begin{align*}
C_{_{\sqrt{2}}}
&=
\frac{1}{\sqrt{2}(\mu^{2}-1)\omega}
\left(\sqrt{2}(\omega^{2}+\gamma_{0})\frac{t_{\sqrt{2}}}{t_{1}}-\gamma_{0}-2\omega^{2}\right)^{-1}, \\
C_2
&=
\frac{1}{2(\mu^{2}-1)\omega}
\left( 
2( \gamma_{0}+\omega^{2} )
\frac{t_2}{t_1}
-\gamma_{0}
-4\omega^{2}
 \right)^{-1}
\end{align*}
The sign of $\gamma_2$ clearly depends upon $\mu$ and $\tilde{\omega}$
(see Figure \ref{fig: c3 sign 1}).

\begin{figure}[h]
\centering
\includegraphics[scale=0.3]{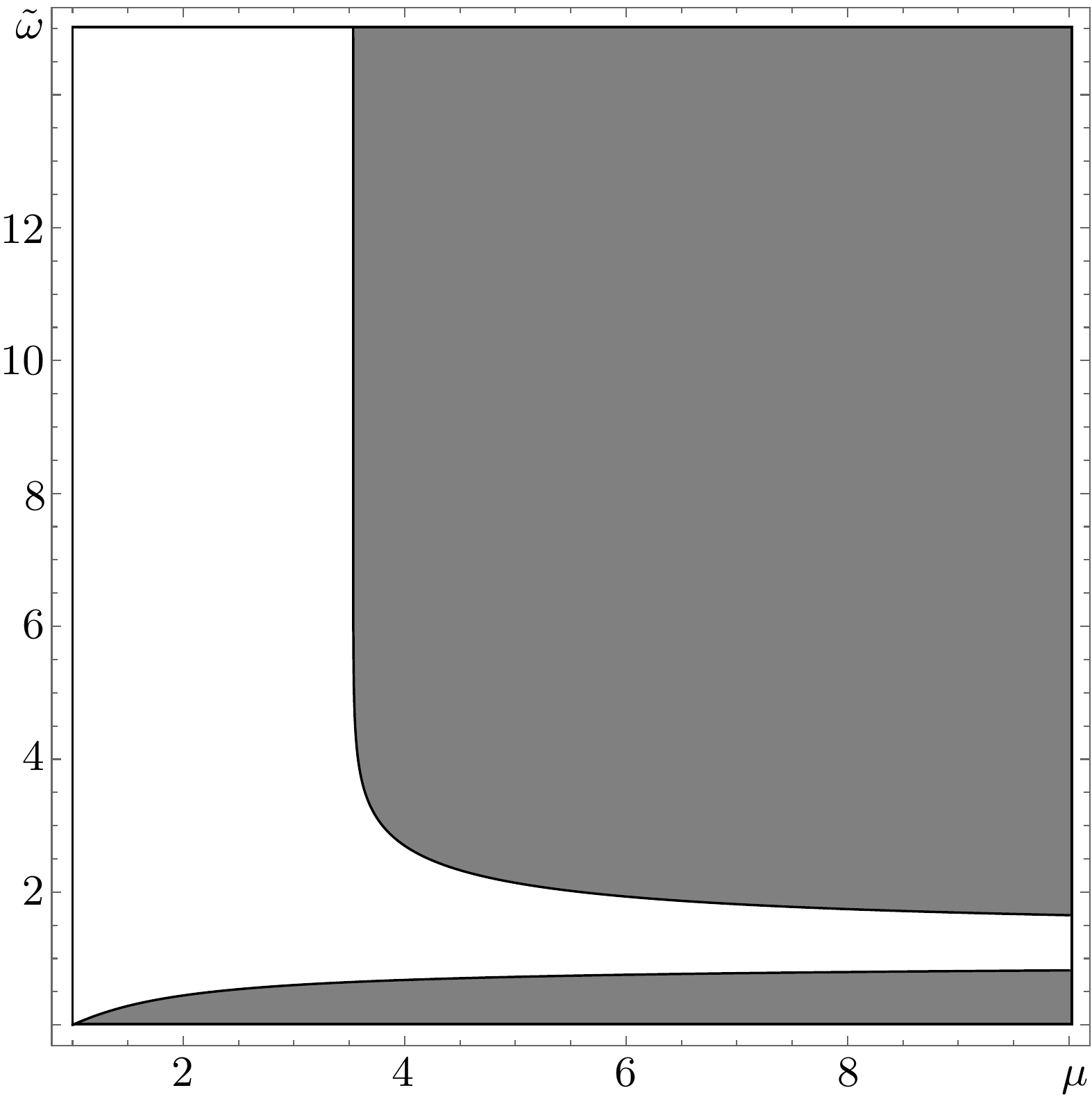}\hspace{1cm}\includegraphics[scale=0.3]{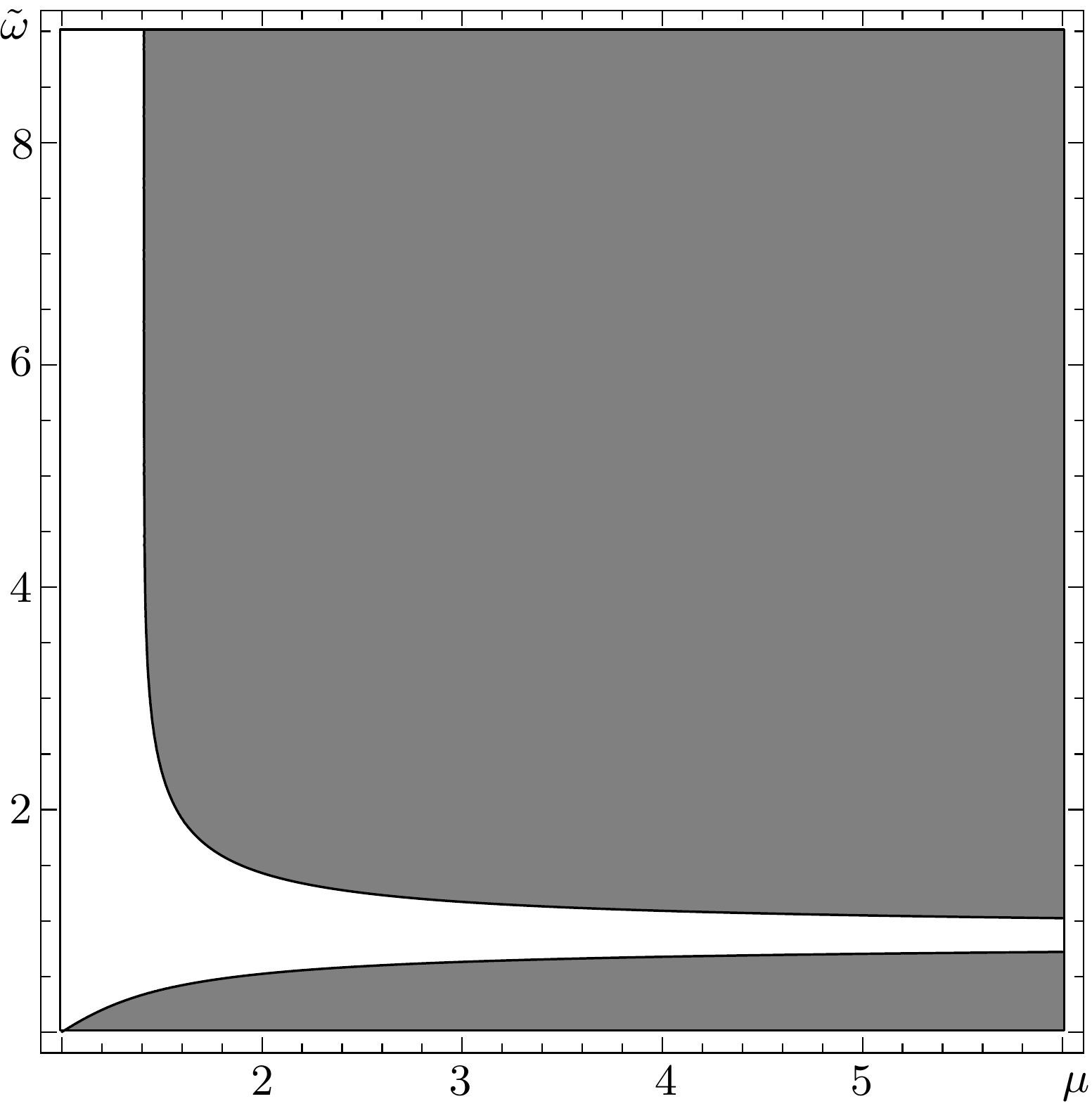}
{\it \caption{The sign of the coefficient $\gamma_2$ as a function of $\mu$ and $\tilde{\omega}$ for a linear magnetisation law for rolls (left) and rectangles (right). The shaded and white areas show the regions in which the bifurcation is respectively super- and subcritical.}
\label{fig: c3 sign 1}}
\end{figure}
\item
\emph{Small values of $\beta_{0}$ (corresponding to deep fluids):} Abbreviating 
$\mu(1),\dot{\mu}(1),\ddot{\mu}(1),\dddot{\mu}(1)$ to respectively $\mu_{1},\dot{\mu}_{1},\ddot{\mu}_{1},\dddot{\mu}_{1}$, one finds that
		{\scriptsize
		\begin{align*}
			\gamma_{2}
			&=
			\Big(
						t\big(
2  \mu_{1}^{3} \dddot{\mu}_{1}+2  \mu_{1}^{2} \dot{\mu}_{1} \dddot{\mu}_{1}-2  \mu_{1}^{2} \dddot{\mu}_{1}-2 \mu_{1}  \dot{\mu}_{1} \dddot{\mu}_{1}-\tfrac{11}{2}  \mu_{1}^{2}{ \ddot{\mu}_{1}}^{2}+\tfrac{11}{2}\mu_{1}  \ddot{\mu}_{1}^{2}+42  \mu_{1}^{3} \ddot{\mu}_{1}+49  \mu_{1}^{2} \dot{\mu}_{1} \ddot{\mu}_{1}-8  \mu_{1}^{2} \dot{\mu}_{1}
						\\
			&
			\qquad\qquad\mbox{}
			+29  \mu_{1}{ \dot{\mu}_{1}}^{2} \ddot{\mu}_{1}
			-10  \mu_{1}^{2} \ddot{\mu}_{1}+15 \mu_{1}
 \dot{\mu}_{1} \ddot{\mu}_{1}+3 { \dot{\mu}_{1}}^{2} \ddot{\mu}_{1}+8  \dot{\mu}_{1} \mu_{1}^{3}
+16  \mu_{1}^{2}{ \dot{\mu}_{1}}^{2}+8  \mu_{1} { \dot{\mu}_{1}}^{3}-16  \mu_{1} { \dot{\mu}_{1}}^{2}-8 { \dot{\mu}_{1}}^{3}
			\big)
						\\
			&
			\qquad\mbox{}
+
16  \mu_{1}^{4} \ddot{\mu}_{1}+32  \mu_{1}^{3} \dot{\mu}_{1} \ddot{\mu}_{1}+16  \mu_{1}^{2} \dot{\mu}_{1}^{2} \ddot{\mu}_{1}
+30  \mu_{1}^{2} \dot{\mu}_{1} \ddot{\mu}_{1}+5  \mu_{1}^{2}{ \ddot{\mu}_{1}}^{2}+10 \mu_{1} { \dot{\mu}_{1}}^{2} \ddot{\mu}_{1}
-16  \mu_{1}^{2} \ddot{\mu}_{1}-46 \mu_{1}  \dot{\mu}_{1} \ddot{\mu}_{1}
			\\
			&
			\qquad\mbox{}
-5 \mu_{1} \ddot{\mu}_{1}^{2}
-10 { \dot{\mu}_{1}}^{2} \ddot{\mu}_{1}
			\Big)\frac{(\mu_{1}-1)^{7}\mu_{1}^{2}}{512t^{6}(\mu_{1}+\dot{\mu}_{1})}
			\\
			&
			\quad
			+
			\Big(
			t\big(
96  \mu_{1}^{6}+636  \mu_{1}^{5} \dot{\mu}_{1}+1129  \mu_{1}^{4}{ \dot{\mu}_{1}}^{2}
+850  \mu_{1}^{3}{ \dot{\mu}_{1}}^{3}+217  \mu_{1}^{2}{ \dot{\mu}_{1}}^{4}-512  \mu_{1}^{5}-1720  \mu_{1}^{4} \dot{\mu}_{1}-1810  \mu_{1}^{3}{ \dot{\mu}_{1}}^{2}
			\\
			&			
			\qquad \qquad\quad\mbox{}
-740  \mu_{1}^{2}{ \dot{\mu}_{1}}^{3}-50  \mu_{1} { \dot{\mu}_{1}}^{4}+96  \mu_{1}^{4}+220  \dot{\mu}_{1} \mu_{1}^{3}+73  \mu_{1}^{2}{ \dot{\mu}_{1}}^{2}
-14 \mu_{1} { \dot{\mu}_{1}}^{3}-7{ \dot{\mu}_{1}}^{4}
			\big)	\\
			&\qquad\quad\mbox{}					
+88  \mu_{1}^{8}+440  \mu_{1}^{7} \dot{\mu}_{1}
+880  \mu_{1}^{6}{ \dot{\mu}_{1}}^{2}+880  \mu_{1}^{5}{ \dot{\mu}_{1}}^{3}
+440  \mu_{1}^{4}{ \dot{\mu}_{1}}^{4}+88  \mu_{1}^{3}{ \dot{\mu}_{1}}^{5}-\!256\!  \mu_{1}^{7}\!-\!928  \mu_{1}^{6} \dot{\mu}_{1}\!-\!1312  \mu_{1}^{5}{ \dot{\mu}_{1}}^{2}
			\\
			&
			\qquad \quad\mbox{}-864  \mu_{1}^{4}{ \dot{\mu}_{1}}^{3}
			-224  \mu_{1}^{3}{ \dot{\mu}_{1}}^{4}-80  \mu_{1}^{6}-416  \mu_{1}^{5} \dot{\mu}_{1}-614  \mu_{1}^{4}{ \dot{\mu}_{1}}^{2}
-420  \mu_{1}^{3}{ \dot{\mu}_{1}}^{3}-102  \mu_{1}^{2}{ \dot{\mu}_{1}}^{4}\!+\!256 \mu_{1}^{5}\!+\!672  \mu_{1}^{4} \dot{\mu}_{1}
			\\
			&
			\qquad \quad\mbox{}
-8  \mu_{1}^{4}+460  \mu_{1}^{3}{ \dot{\mu}_{1}}^{2}\!+\!104 \mu_{1}^{2}{ \dot{\mu}_{1}}^{3}-20  \mu_{1} { \dot{\mu}_{1}}^{4}\!+\!72  \dot{\mu}_{1} \mu_{1}^{3}
 \!+\!194  \mu_{1}^{2}{ \dot{\mu}_{1}}^{2}\!+\!84  \mu_{1} { \dot{\mu}_{1}}^{3}\!+\!10 { \dot{\mu}_{1}}^{4}
			\Big)\!\frac{(\mu_{1}-1)^{6}\mu_{1}}{1024t^{6}(\mu_{1}+\dot{\mu}_{1})}\\
			& \quad + o(1)
		\end{align*}
		}
as $\beta_0 \rightarrow 0$ for rolls and
{\scriptsize
		\begin{align*}
			{\gamma_{2}}
			&=
			\Big(
					t	\big(
(42 \sqrt{2}\!-\!60)\mu_{1}\dddot{\mu}_{1}\dot{\mu}_{1}
\!-\!(42 \sqrt{2}\!-\!60)\mu_{1}^{2} \dddot{\mu}_{1}   \dot{\mu}_{1}    
+(\!{\tfrac {411}{\sqrt{2}}} \!-\!289) \mu_{1} \ddot{\mu}_{1}^{2}
+\!(4461 \sqrt{2} \!-\!6190)  \ddot{\mu}_{1} \mu_{1}^{2}  \dot{\mu}_{1}  
  			\\
			&
			\qquad\qquad \mbox{}
+\!(3413  \sqrt{2}\!-\!4766 ) \ddot{\mu}_{1} \mu_{1}  \dot{\mu}_{1}  ^{2}
\!-\!(429\sqrt{2} \!-\!558)  \ddot{\mu}_{1} \mu_{1}  \dot{\mu}_{1}
+\!(168 \sqrt {2}    \!-\!240)   \dot{\mu}_{1}  ^{3}
+\!(168 \sqrt {2}\!-\!240) \mu_{1}^{2}  \dot{\mu}_{1}  
  			\\
			&
			\qquad\qquad \mbox{}
+\!(336 \sqrt {2}
\!-\!480 )\mu_{1}  \dot{\mu}_{1}  ^{2}
+\!(1870  \sqrt{2}
\!-\!2580  )\ddot{\mu}_{1} \mu_{1}^{3}
			+\!(82 \sqrt{2} \!-\!172  )\ddot{\mu}_{1} \mu_{1}^{2}
\!-\!\!(1333 \sqrt{2} \!-\!1886)  \ddot{\mu}_{1}   \dot{\mu}_{1}  ^{2}
  			\\
			&
			\qquad\qquad \mbox{}
+\!(42 \sqrt{2}\!-\!60) \mu_{1}^{2} \dddot{\mu}_{1} \!-\!(\!{\tfrac {411 }{\sqrt{2}}\!}\!-\!289) \mu_{1}^{2} \ddot{\mu}_{1} ^{2}
			\!-\!(168 \sqrt {2}\!-\!240)\mu_{1}  \dot{\mu}_{1}^{3}
\!-\!(168  \sqrt{2} \!-\!240)   \dot{\mu}_{1}  \mu_{1}^{3}
			  			\\
			&
			\qquad\qquad \mbox{}
\!-\!(336\sqrt {2}\!-\!480) \mu_{1}^{2}  \dot{\mu}_{1}  ^{2}
			\!-\!(42 \sqrt{2}\!-\!60) \mu_{1}^{3} \dddot{\mu}_{1} 
			\big)	
			\\
			&
			\qquad\mbox{}
			+\!(976 \sqrt {2}\!-\!1376) \mu_{1}^{4}\ddot{\mu}_{1}
			+\!(1952  \sqrt{2} \!-\!2752)   \dot{\mu}_{1}  \mu_{1}^{3}\ddot{\mu}_{1}
			+\!(976 \sqrt {2}\!-\!1376) \mu_{1}^{2}  \dot{\mu}_{1}  ^{2}			\ddot{\mu}_{1}
			+\!(445 \sqrt{2}\!-\!622) \mu_{1}^{2} \ddot{\mu}_{1} ^{2}			
			\\
			&
			\qquad\mbox{}
			+\!(366 \sqrt {2}\!-\!468 )\mu_{1}^{2}  \dot{\mu}_{1}  \ddot{\mu}_{1}
			\!-\!(1414 \sqrt {2}\!-\!2020) \mu_{1}  \dot{\mu}_{1}  ^{2}\ddot{\mu}_{1}
			\!-\!(445\sqrt{2}\!-\!622) \mu_{1}  \ddot{\mu}_{1} ^{2}
			\!-\!(976 \sqrt {2}\!-\!1376) \mu_{1}^{2}\ddot{\mu}_{1}
			\\
			&
			\qquad\mbox{}
			\!-\!(1342 \sqrt {2}\!-\!1844) \mu_{1}  \dot{\mu}_{1}  \ddot{\mu}_{1}
			+\!(1414 \sqrt {2}\!-\!2020)   \dot{\mu}_{1}  ^{2}		\ddot{\mu}_{1}
			\Big)\frac{(\mu_{1}\!-\!1)^{7}\mu_{1}^{2}}{512(10\!-\!7\sqrt{2})t^{6}(\mu_{1}+\dot{\mu}_{1})}
			\\
			&
			\quad
			+
			\Big(
			t \big(
			(3300\sqrt{2}\!-\!3288 )  \dot{\mu}_{1}  \mu_{1}^{3}
			+\!(5745 \sqrt {2}\!-\!7110 )\mu_{1}^{2}  \dot{\mu}_{1}  ^{2}
			+\!(2410 \sqrt {2}\!-\!3260) \mu_{1}{ \dot{\mu}_{1}  }^{3}
			\\
			&			
			\qquad \qquad\quad \mbox{}
			+\!(6084 \sqrt {2}	\!-\!6168) \mu_{1}^{5}  \dot{\mu}_{1} 
			+\!(7377 \sqrt {2}\!-\!6406 )\mu_{1}^{4}  \dot{\mu}_{1}  ^{2}
			\!-\!(3190 \sqrt {2}\!-\!7300) \mu_{1}^{3}  \dot{\mu}_{1}  ^{3}
			\\
			&			
			\qquad \qquad\quad \mbox{}
			-\!(6799 \sqrt {2}\!-\!10298) \mu_{1}^{2}  \dot{\mu}_{1}  ^{4}
			\!-\!(58696 \sqrt {2}\!-\!79024) \mu_{1}^{4}  \dot{\mu}_{1}  
			\!-\!(61154 \sqrt {2}\!-\!81804) \mu_{1}^{3}  \dot{\mu}_{1}  ^{2}
			\\
			&			
			\qquad \qquad\quad \mbox{}
			\!-\!(11604 \sqrt {2}\!-\!14456) \mu_{1}^{2}  \dot{\mu}_{1}  ^{3}
			+\!(11550 \sqrt {2}\!-\!16500) \mu_{1}  \dot{\mu}_{1}  ^{4}
			+\!(672 \sqrt {2}\!-\!448) \mu_{1}^{6}
			\\
			&			
			\qquad \qquad\quad \mbox{}
			\!-\!(17408 \sqrt {2}\!-\!23552) \mu_{1}^{5}
			\!-\!(2351 \sqrt {2}  \!-\!3322)   \dot{\mu}_{1}  ^{4}
			+\!(672 \sqrt{2}\!-\!448) \mu_{1}^{4}
			\big)
			\\
			&
			\qquad\quad \mbox{}
			+\!(2344\sqrt{2} \!-\!3056 )\mu_{1}^{3}  \dot{\mu}_{1}  ^{5}
			+\!(11720 \sqrt {2}\!-\!15280) \mu_{1}^{7}  \dot{\mu}_{1}  
			+\!(23440 \sqrt {2}\!-\!30560) \mu_{1}^{6}  \dot{\mu}_{1}  ^{2}
			\\
			&
			\qquad\quad \mbox{}
			+\!(23440 \sqrt {2}\!-\!30560 )\mu_{1}^{5}  \dot{\mu}_{1}  ^{3}
			+\!(11720 \sqrt {2}\!-\!15280 )\mu_{1}^{4}{\dot{\mu}_{1}}^{4}
			\!-\!(36384 \sqrt {2}\!-\!49344) \mu_{1}^{6}  \dot{\mu}_{1}  
			\\
			&
			\qquad\quad \mbox{}
			-\!(60832\sqrt {2}\!-\!82880) \mu_{1}^{5}  \dot{\mu}_{1}  ^{2}
			\!-\!(47328 \sqrt {2}\!-\!64832 )\mu_{1}^{4}  \dot{\mu}_{1}^{3}
			\!-\!(14176 \sqrt {2}\!-\!19520) \mu_{1}^{3}  \dot{\mu}_{1}  ^{4}
			\\
			&
			\qquad\quad \mbox{}
			+\!(4344 \sqrt{2}\!-\!6608)   \dot{\mu}_{1}  \mu_{1}^{3}
			\!-\!(478 \sqrt {2}\!-\!628)\mu_{1}^{2}  \dot{\mu}_{1}  ^{2}
			\!-\!(4860 \sqrt {2}\!-\!7016) \mu_{1}  \dot{\mu}_{1}  ^{3}
			+\!(2344 \sqrt {2}\!-\!3056) \mu_{1}^{8}
			\\
			&
			\qquad\quad \mbox{}
			\!-\!(8704 \sqrt {2}\!-\!11776) \mu_{1}^{7}
			\!-\!(15392 \sqrt {2}\!-\!21440) \mu_{1}^{5}  \dot{\mu}_{1}
			-\!(18422 \sqrt {2}\!-\!25348) \mu_{1}^{4}  \dot{\mu}_{1}  ^{2}
			\\
			&
			\qquad\quad \mbox{}
			\!-\!(8196 \sqrt {2}\!-\!10904) \mu_{1}^{3}  \dot{\mu}_{1}  ^{3}
			+\!(2410 \sqrt {2}\!-\!3644) \mu_{1}^{2}  \dot{\mu}_{1}  ^{4}
			+\!(27680 \sqrt {2}\!-\!37568) \mu_{1}^{4}  \dot{\mu}_{1} 
			\\
			&
			\qquad\quad \mbox{}
			+\!(33644 \sqrt {2}\!-\!46344) \mu_{1}^{3} \dot{\mu}_{1}  ^{2}
			+\!(18088 \sqrt {2}\!-\!25328 )\mu_{1}^{2}  \dot{\mu}_{1}  ^{3}
			-\!(3700 \sqrt {2}\!-\!5176) \mu_{1}  \dot{\mu}_{1}  ^{4}
			\\
			&
			\qquad\quad \mbox{}
			-\!(4016 \sqrt {2}\!-\!\!5664) \mu_{1}^{6}
			+\!(8704 \sqrt {2}\!-\!\!11776 )\mu_{1}^{5}
			+\!(1850 \sqrt {2}\!-\!\!2588)   \dot{\mu}_{1}  ^{4}
			\\
			&
			\qquad\quad \mbox{}
			+\!(1672 \sqrt {2}\!-\!\!2608 )\mu_{1}^{4}
			\Big)\frac{(\mu_{1}-1)^{6}\mu_{1}}{1024(10-7\sqrt{2})t^{6}(\mu_{1}+\dot{\mu}_{1})}+o(1)
\end{align*}
}
as $\beta_0 \rightarrow 0$ for rectangles, 	where $t=\sqrt{\mu_1(\mu_1+\dot{\mu}_1)}+1$.
(Note that
$$\omega = \frac{\mu_1(\mu_1-1)^2}{2(\mu_1+S_1)}+o(1), \qquad
\gamma_0 = \frac{\mu_1^2(\mu_1-1)^4}{4(\mu_1+S_1)^2}+o(1)$$
as $\beta_0 \rightarrow 0$.)

We note in particular that for constant $\mu$ (corresponding to a linear magnetisation law),
rolls bifurcate subcritically for $\mu<\mu_\mathrm{c}^1$ and
supercritically for $\mu>\mu_\mathrm{c}^1$, while rectangles bifurcate subcritically for
$\mu<\mu_\mathrm{c}^2$ and supercritically for $\mu>\mu_\mathrm{c}^2$, where
$$
\mu_\mathrm{c}^1=\frac{21}{11}+\frac{8}{11}\sqrt{5},
\qquad
\mu_\mathrm{c}^2=\frac{115+160\sqrt{2}+8\sqrt{184+11\sqrt{2}}}{141+128\sqrt{2}}.
$$

Figure \ref{fig: c2 sign 2} shows the sign of $\gamma_2$ for the Langevin magnetisation law
\begin{equation}
\mu(s)=1+\frac{M}{s}\left(\coth(\gamma s)-\frac{1}{\gamma s}\right)
\label{eq: Langevin}
\end{equation}
in the limit $\beta_0 \rightarrow 0$, where $M$ and $\chi_0$
are respectively the magnetic saturation and initial susceptibility of the ferrofluid
and $\gamma=3\chi_0/M$.

\begin{figure}[h]
\centering
\includegraphics[scale=0.3]{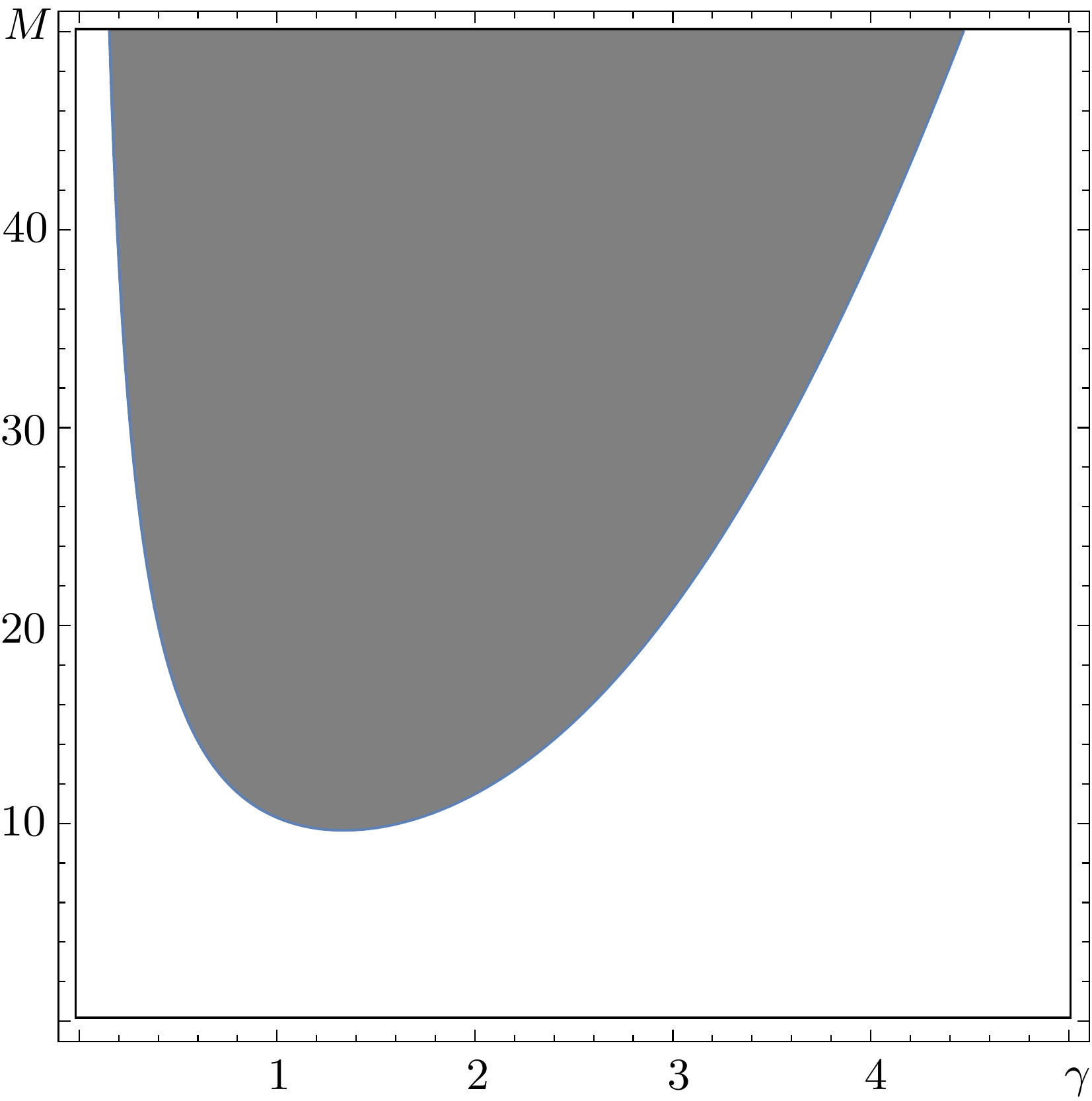}\hspace{1cm}\includegraphics[scale=0.3]{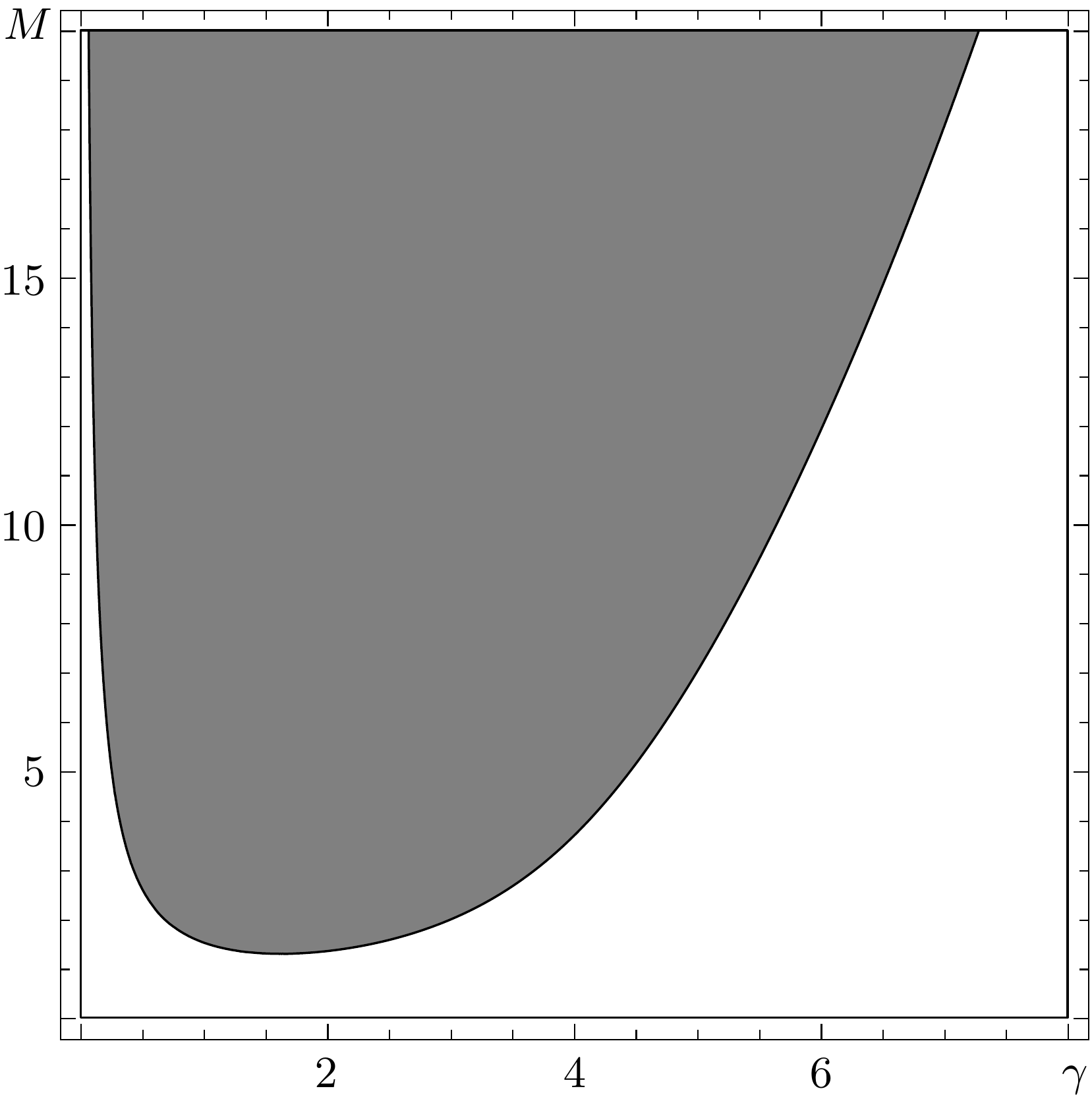}
{\it \caption{The sign of the coefficient $\gamma_2$ as a function of $M$ and $\gamma$ for
the Langevin magnetisation law \eqref{eq: Langevin} 
for rolls (left) and rectangles (right)
in a ferrofluid of great depth. The shaded and white areas show the regions in which the bifurcation is respectively super- and subcritical.}
\label{fig: c2 sign 2}}
\end{figure}
\end{enumerate}

\vspace{-12mm}
\dataccess{This paper has no additional data.}\vspace{-0.5\baselineskip}
\aucontribute{M.D.G. and J.H. collaborated equally on each aspect of this work.}\vspace{-0.5\baselineskip}
\competing{We declare we have no competing interests.}\vspace{-0.5\baselineskip}
\funding{No external funding was involved in this work.}
\vspace{-2mm}

\end{document}